\documentclass[10pt,reqno,a4paper]{amsart}

\usepackage{mathtools}
\usepackage{amsmath,amsthm, amsfonts, amsrefs, amssymb}
\mathtoolsset{showonlyrefs}

\usepackage{graphicx}
\usepackage{bbm}
\usepackage{tikz}
\usepackage{tikz-cd}
\usepackage{hyperref}
\usepackage{todonotes}
\usepackage{mathrsfs}
\usepackage[margin=2cm]{geometry}
\usepackage{enumitem}

\numberwithin{equation}{section}

\theoremstyle{plain}
\newtheorem{thm}{Theorem}[section]
\newtheorem{lemma}[thm]{Lemma}
\newtheorem{col}[thm]{Corollary}

\theoremstyle{definition}
\newtheorem{defn}[thm]{Definition}

\newtheorem{remark}[thm]{Remark}

\def\XXint#1#2#3{{\setbox0=\hbox{$#1{#2#3}{\int}$ }
\vcenter{\hbox{$#2#3$ }}\kern-.6\wd0}}

\newcommand{\nc}{\newcommand}

\nc{\sgn}{\mathrm{sgn}}
\nc{\nothing}{\varnothing}
\nc{\Ab}{\mathrm{Ab}}
\nc{\Graph}{\mathbf{Graph}}
\nc{\FinSet}{\mathbf{FinSet}}
\nc{\Vect}{\mathbf{Vect}}
\nc{\Top}{\mathbf{Top}}
\nc{\Cat}{\mathbf{CAT}}
\nc{\Set}{\mathbf{Set}}
\nc{\Rel}{\mathbf{Rel}}
\nc{\Grp}{\mathbf{Grp}}
\nc{\AbGrp}{\mathbf{AbGrp}}
\nc{\cT}{\mathcal T}
\nc{\cG}{\mathcal G}
\nc{\cF}{\mathcal F}
\nc{\cK}{\mathcal{K}}
\nc{\cE}{\mathcal E}
\nc{\cP}{\mathcal P}
\nc{\cM}{\mathcal M}
\nc{\cC}{\mathcal C}
\nc{\cB}{\mathcal B}
\nc{\cS}{\mathcal S}
\nc{\cD}{\mathcal D}
\nc{\cA}{\mathcal A}
\nc{\PP}{\mathbb P}
\nc{\cU}{\mathcal U}
\nc{\Mod}{\operatorname{Mod}}
\nc{\Aut}{\operatorname{Aut}}
\nc{\del}{\partial}
\nc{\inter}{\mathrm{o}}
\nc{\close}[1]{\overline{#1}}
\nc{\pderiv}[2]{\frac{\partial #1}{\partial #2}}
\nc{\tr}{\operatorname{tr}}
\nc{\dd}{\mathrm{d}}
\renewcommand{\epsilon}{\varepsilon}

\nc{\RP}{\R\mathrm{P}}

\newcommand{\R}{\mathbb{R}}

\newcommand{\N}{\mathbb{N}}

\newcommand{\inv}{^{-1}}

\nc{\ev}{\mathrm{ev}}
\nc{\Nat}{\mathrm{Nat}}

\newcommand{\ra}{\rightarrow}

\newcommand{\diver}{\mathrm{div}}
\nc{\vect}[1]{\mathbf{#1}} 
\nc{\im}{\mathrm{im}}
\nc{\weakra}{\rightharpoonup}
\nc{\Cof}{\mathrm{Cof}}
\nc{\innprod}[2]{\left\langle #1, #2 \right\rangle}
\nc{\norm}[1]{\left\| #1 \right\|}
\nc{\abs}[1]{\left\lvert #1 \right\rvert}
\nc{\e}[1]{ \mathbb E \left[ #1 \right] }
\renewcommand{\d}{\mathrm{d}}
\newcommand{\loc}{\mathrm{loc}}
\nc{\vp}{\varphi}

\newcommand{\lltriangle}{\rotatebox[origin=c]{360}{\tikz{\draw[thick](0,0)--(0.25,0)--(0,0.25)--cycle;}}}

\allowdisplaybreaks

\subjclass[2020]{35R35, 35K65}
\keywords{Porous media equation, thin-film equation, qualitative properties, support propagation, waiting time phenomenon}

\begin{document}

\title[Free Boundary Propagation of a Thin-Film Equation with Nonlinear Diffusion]{Free Boundary Propagation of a Thin-Film Equation with Nonlinear Diffusion: Scale-Consistent Methods for Optimal Results}
\author{Joshua Utley}
\address{Department of Mathematics, FAU Erlangen-Nürnberg, Cauerstraße 11, 91058 Erlangen}
\email{josh.utley@fau.de} 
\date{\today}

\begin{abstract}
We investigate the free boundary propagation of a non-linear degenerate parabolic problem with both fourth and second order non-linearities, which may be used to model viscous thin-film flow in the regime of weak slippage with an additional contributions from a singular disjoining pressure brought forth by inter-molecular forces. By carefully treating the individual scaling behaviors of the non-linearities, we obtain asymptotically optimal upper and lower bounds on the rate of forward support propagation. These rates show that the fourth-order operator dominates the propagation rate on short time scales, but the long-term rate of propagation is controlled by the second-order operator. We estimate further the time at which this trade-off in propagation rate occurs. Lastly, we examine sufficient conditions to ensure that the free boundary does not immediately advance in a given direction, leading to a detailed picture of how the competition between the second/fourth-order operators influence the free boundary dynamics of solutions.
\end{abstract}

\maketitle

\section{Introduction}

The thin-film equation
\begin{equation}\label{eq-tfe}\tag{TFE}
u_t + \diver(u^n\nabla\Delta u) = 0
\end{equation}
and the porous-medium equation 
\begin{equation}\label{eq-pme}\tag{PME}
u_t - \Delta(u^m) = 0
\end{equation}
are quintessential examples of degenerate parabolic PDEs which give rise to free boundary problems, and a large body of literature exists in which the motions of free boundaries for their solutions are examined. Concerning the porous-medium equation, the evolution of the support and its boundary is well-understood (s. \cite[Chapter 14]{Vazquez_PME} for a thorough overview). For the thin-film equation, the picture regarding free-boundary propagation has also become increasingly well-resolved (see, for instance, \cite{BernisFSOP,Shishkov/Hulshof,DPGG,GGDP,BasicFSOP,GrnHabWTP,FischerWTU,Fischer2016}). The object of this work is to examine in which ways the competition of the two operators influence the support propagation of solutions to the following PDE:
\begin{equation}\label{eq-TFPM}\tag{TFPM}
u_t + \diver(u^n\nabla\Delta u-u^{m-1}\nabla u) = 0.
\end{equation} 
Throughout, we consider solutions to the Cauchy problem associated with \eqref{eq-TFPM} in dimensions $d=1,2,3$ under the assumption that $(n,m)\in [2,3)\times (1,2)$. Concerning the free-boundary propagation of solutions, we address the following questions: 
\begin{enumerate}
\item\label{q1} Do weak solutions to \eqref{eq-TFPM} exhibit finite speed of forward propagation, i.e., does a positive time elapse before the support arrives at any point where it was not previously?
\item\label{q2} If the answer to the above question is yes, what can we then say about the speed of propagation? How does this speed depend on the interplay between the parameters $m,n$?
\item\label{q3} Given a point $x_0\in \Omega\setminus  \mathrm{supp}\, u_0$, we may define $R_0(x_0) := \mathrm{dist}\left( x_0, \mathrm{supp}\, u_0\right)$. What are sufficient assumptions which ensure that a positive time elapses before mass enters $B_{R_0}(x_0)$?
\end{enumerate}

The answers obtained in this work (s. Section \ref{section-main-results} for precise statements) shed light on the influence which the addition of the lower-order term has on the behavior of droplets spreading on a flat surface. 

\eqref{eq-TFPM} has been proposed as a model for the spreading of a liquid thin film under the influence of inter-molecular forces which diminish sufficiently quickly near the free boundary. We briefly summarize the exposition of \cite{BP}: For many liquid thin films, it makes sense to consider, in addition to the evolution which is driven by surface tension, the influence of long-range Van der Waals forces on the velocity of the fluid bulk. Following the approach of \cite{Williams_Davis_Nonlinear_Rupture}, the following evolution equation was obtained for the film-height (assuming a no-slip boundary condition):
\begin{equation}
    u_t + \diver\left( u  \vect U  \right) = 0, \quad \vect U = \frac{u^2}{3\eta}\left( \nabla \Pi (u) + \gamma\nabla\Delta u \right),
\end{equation}
where $\eta>0$ is the viscosity, $\gamma>0 $ the surface tension, and $\Pi(u)$ the so-called disjoining pressure, which is meant to encapsulate the influence of inter-molecular forces. Assuming that the disjoining pressure for Van der Waals forces decays according to the power law 
\[
     \Pi (u) \sim u^{-3},
\]
the following evolution equation appears (in dimensionless form):
\begin{equation}\label{eq-log-diffusion}
    u_t + \Delta(\log u) + \diver\left(u^{3}\nabla \Delta u\right) = 0.
\end{equation}

The logarithmic term is not what one wishes to see in the study of droplets which truly touch down, since it is no longer clear what is to be made of this equation. This term arises from the presumption that $\Pi(u) \sim u^{-3}$, which is generally accepted to be correct down to a certain length scale (s. \cite{Overview_Ref_Disjoining_Pressure}). However, as is noted by the authors in \cite{Williams_Davis_Nonlinear_Rupture}, the approximation becomes increasingly poor where the film-height is very small. In this case, the loss of regularity at the free boundary is so great that it is not even clear that non-negative weak solutions to \eqref{eq-log-diffusion} should exist at all. The ansatz of \cite{BP} is to replace the power law for $\Pi$ by a law of the following form:
\[
    \abs{\nabla \Pi(u)} \, \sim \, \varphi_m\left(u \right) \, \abs{\nabla\left(u^{-3}\right)},
\]
where $\varphi_m$ is a suitable superlinear cut-off function ($\varphi_m \sim u^m$), which activates below a certain film-height. This staves off the logarithmic singularity in \eqref{eq-log-diffusion} by enforcing that the Van der Waals forces are ``cut off'' below a certain length scale. The object of our study, the equation \eqref{eq-TFPM}, is a simplified version of the resulting evolution equation, which captures the new aspects of the evolution induced by the degeneracy of the disjoining pressure near the free boundary.

In addition to being an accessible free boundary problem---one might reasonably expect the existence of non-negative solutions with finite speed of propagation---solutions to \eqref{eq-TFPM} have also been shown to have some remarkable qualities, chief among them being the forward propagation of support for $n =3$ (s. \cite[Proposition 7]{BP}). This stands in stark contrast to expectations for solutions to \eqref{eq-tfe} alone when $n= 3$, where one encounters the famous no-slip paradox. This paradox, loosely put, says that a moving contact line entails an infinite dissipation of energy. We recommend the reader to the work \cite{Giacomelli_Durastanti_2023} and the references contained therein for a thorough overview of the no-slip paradox and some of the attempts (which include the introduction of \eqref{eq-TFPM}) to resolve it for thin-film equations. For $n=3$, where it is expected that solutions to \eqref{eq-tfe} exhibit no support propagation, we might reasonably expect that any free boundary motion is dominated by the porous-medium term. It is less clear what should happen when $n<3$. In this case the thin-film operator does \emph{not} fix the free boundary: it is known that sufficiently regular weak solutions to \eqref{eq-tfe} will also have forward support propagation (s. \cite{Fischer_Asymptotic}). Consequently, a competition between the porous-medium and thin-film operators arises, which it is the goal of this work to inspect. Some results concerning the outcome of this competition are available when $n< 2$ (s. \cite{GSDP}), and the results presented of this work provide a more complete picture when $n\geq 2$.

\subsection{Definitions and Statements of Main Results}\label{section-main-results}

This subsection is dedicated to the prerequisite definitions and rigorous formulations of the main results of this work. In the subsequent section, we will contextualize these results within the broader literature. We are interested in weak, energy-dissipating, zero-contact-angle solutions to the following PDE:
\begin{equation}\label{eq-TFPM-full}
\begin{cases}
u_t + \diver(u^n\nabla\Delta u- u^{m-1}\nabla u) = 0 & (t,x)\in \R_+\times \R^d \\
u(0,x) = u_0(x) & x\in \R^d
\end{cases}
\end{equation}
We make the following assumption on the initial data:
\begin{equation}\label{ass-u0}\tag{A1}
u_0 \in H^1(\R^d), \quad u_0 \geq0, \quad \mathrm{supp} \, u_0 \subset\subset  \R^d.
\end{equation}
For the mobility exponent $n$ and the degeneracy parameter $m$, we impose the following condition:
\begin{equation}\label{ass-nm}\tag{A2}
(n,m) \in [2,3)\times (1,2).
\end{equation}
Our notion of a solution is a perturbation of the solution concept given for thin-film equations with $n\in [2,3)$ (s. \cite{GrnHabC}). 

\begin{defn} \label{defn-solution}
 Let $d\in\left\{1,2,3\right\}$ and assume \eqref{ass-u0}. A non-negative function $u\in  L^\infty\left(\R_+; H^1(\R^d)\right)\cap L^\infty\left(\R_+; L^1(\R^d) \right)$ is called a \emph{strongly energy dissipating solution} to \eqref{eq-TFPM-full} with initial data $u_0\in H^1(\R^d)$ if the following properties are satisfied for all $p>\frac{4d}{2d-n(d-2)}$:
\begin{enumerate}
\item For any compact $K\subset\R^d$, $u\in H^1\left(\R_+; \left(W^{1,p}(K)\right)'\right)$. 
\item For all $T>0$ and $\varphi\in L^2\left(0,T; W^{1,\infty}(\R^d)\right)$ such that $\bigcup_{t\in[0,T]} \mathrm{supp}\, \varphi(t,\cdot)$ is compactly supported, we have \\
\begin{equation}\label{eq-weakform}
\int_0^T \innprod{u_t}{\varphi}_{(W^{1,p})'\times W^{1,p}} = \int_0^T\int_{\left\{u(t)>0\right\}} \left(u^n \nabla\Delta u - u^{m-1}\nabla u\right) \cdot \nabla\varphi .
\end{equation}
\item $u(0,x) = u_0(x)$ in the sense that $\lim_{t\ra 0}\norm{u(t,\cdot) - u_0}_{L^1_\loc} = 0$.
\item For all $\alpha\in (\max\left\{\frac{1}{2}-n,-m,-1\right\}, 2-n) \setminus \{0\}$, there exists $C_{\alpha,n,m}>0$ such that for all $\zeta\in C^\infty_c(\R^d)$, the following estimate holds for any $0\leq t_0\leq t_1$:
\begin{equation}\label{eq-alphaentropy}
\begin{split}
& \frac{1}{\alpha(\alpha+1)}\int_{\R^d} \zeta^4 u(t)^{\alpha+1}\bigg\vert_{t_0}^{t_1} \, + \, \int_{t_0}^{t_1}\int_{\R^d} \zeta^4 \abs{\nabla u^{\frac{\alpha+n+1}{4}} }^4 \\
& \hspace{2cm} +\, \int_{t_0}^{t_1}\int_{\R^d}  \zeta^4 \abs{D^2 u^{\frac{\alpha+n+1}{2}}}^2 \, + \, \int_{t_0}^{t_1}\int_{\R^d}  \zeta^4 \abs{\nabla u^{\frac{\alpha+m}{2}}}^2 \\ 
&\hspace{2cm}\lesssim_{\alpha,m,n} \,  \int_{t_0}^{t_1}\int_{\R^d} \left[ \abs{\nabla\zeta}^4 +  \zeta^2\abs{D^2\zeta}^2 \right]u^{\alpha+n+1} \, + \, \int_{t_0}^{t_1}\int_{\R^d} \abs{\Delta (\zeta^2)} u^{m+\alpha}.
\end{split}
\end{equation}
\item There exists $C_{n,m} > 0$ so that for all $\zeta\in C^\infty_c(\R^d)$, the following estimate holds for all $0\leq t_0\leq t_1$:
\begin{equation}\label{eq-energyest1}
\begin{split} 
& \int_\Omega \zeta^6\abs{\nabla u(t)}^2\bigg\vert_{t_0}^{t_1} +\int_{t_0}^{t_1}\int_{\R^d} \zeta^6 \abs{\nabla u^{\frac{n+2}{6}} }^6 +\int_{t_0}^{t_1}\int_{\R^d} \zeta^6 \abs{\nabla\Delta u^{\frac{n+2}{2}}}^2\\
& \qquad + \int_{t_0}^{t_1}\int_{\R^d} \zeta^6 \abs{D^2u^{\frac{m+1}{2}}}^2  + \int_{t_0}^{t_1}\int_{\R^d} \zeta^6 \abs{\nabla u^{\frac{m+1}{4}}}^4  \\
& \hspace{1.25cm} \lesssim_{n,m}  \int_{t_0}^{t_1}\int_{\R^d} \left[ \abs{\nabla\zeta}^6 + \zeta^3\abs{D^2\zeta}^3 \right]u^{n+2} + \int_{t_0}^{t_1}\int_{\R^d} \zeta^2\abs{\nabla\zeta}^4 u^{m+1}.
\end{split}\end{equation}
\end{enumerate}
\end{defn}

\begin{remark}
    Note that because $u\in L^\infty(\R_+; H^1(\R^d)) \cap H^1(\R_+; (W^{1,p}(K))') $ for every compact $K\subset \R^d$, we know that $u\in C(\R_+; L^1_{\loc}(\R^d))$ (cf. \cite{Simon}). 
\end{remark}

\begin{remark}\label{rem-existence}
On the subject of existence: Existence of solutions in one spatial dimension is obtained using the arguments presented in \cite{Cohen}, the only difference being that the estimate \eqref{eq-energyest1} is different from the localized $H^1$-estimate provided in \cite[Theorem 4]{Cohen}. It turns out that this difference comes down to happenstance. Examining their proof of the analogue to \eqref{eq-energyest1}, the terms which lead to attainment of $u^{\frac{m+1}{2}}\in H^2_\loc(\R^d)$ appear at the end of \cite[Eq. (3.32)]{Cohen}, but are not manipulated. A further integration by parts in this term produces exactly \eqref{eq-energyest1}. In higher spatial dimensions, the results presented in this work should be viewed as conditional, since no existence results are available. However, the techniques of \cite{GrnHabC} seem to be readily applicable, and so we suspect that a straightforward generalization of those techniques is sufficient to prove existence in dimensions two and three for initial data satisfying \eqref{ass-u0}.
\end{remark}

\noindent For the above class of solutions, we will investigate Questions \ref{q1}--\ref{q3}. Beginning with Question \ref{q1}, we must choose a notion of finite speed of propagation. We adopt the following definition:
\begin{defn}\label{defn-fsop}
Let $u\in C(\R_+; L^1_\loc(\R^d))$ be a non-negative function. Given an open ball $B_r(x_0)\subset \R^d$, \emph{the waiting time of $u$ associated with $B_r(x_0)$} is defined to be 
\begin{equation}\label{eq-waiting-time}
T_r(x_0) := \inf\left\{ t>0, \, \int_{B_r(x_0)} u(t,x) \ \d x >0 \right\}.
\end{equation} 
We say that \emph{$u$ has finite speed of propagation} if for every ball $B_r(x_0)$ satisfying $\overline{B_r(x_0)}\subset \R^d\setminus\mathrm{supp} \ u_0$, the corresponding waiting time is positive.
\end{defn}

The main object of study for the answers to Question \ref{q2} are the waiting times of individual points. We make sense of this idea by taking the limit of waiting times in open balls that decrease to the point in question.

\begin{defn}\label{defn-arrivaltime}
Let $u\in C(\R_+; L^1_\loc(\R^d))$ be a non-negative function. For a point $x_0\in \R^d\setminus \mathrm{supp} \, u_0$, the \emph{waiting time of $u$ at $x_0$} is defined to be 
\begin{equation}\label{eqn-arrivaltime}
T_w(x_0) := \lim_{r\ra 0} \, T_r(x_0).
\end{equation}
\end{defn}

We are now ready to present the main results of this work, starting with qualitative finite speed of propagation:

\begin{thm}\label{thm-fsop} Any strongly energy dissipating solution to \eqref{eq-TFPM-full} satisfying \eqref{ass-u0} and \eqref{ass-nm} has finite speed of propagation. 
\end{thm}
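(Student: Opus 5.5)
We prove Theorem \ref{thm-fsop} with a Bernis-type local energy method built on the localized $\alpha$-entropy estimate \eqref{eq-alphaentropy}, combined with a Stampacchia-type iteration lemma over shrinking balls. Fix a ball with $\overline{B_{r}(x_0)}\subset \R^d\setminus \mathrm{supp}\, u_0$ and choose $R>r$ such that $B_R(x_0)$ still does not meet $\mathrm{supp}\,u_0$. For $r\le s<R$ set
\[
E(s,T) := \sup_{t\in[0,T]}\int_{B_s(x_0)} u(t)^{\alpha+1} \,+\, \int_0^T\!\!\int_{B_s(x_0)} \Big(|\nabla u^{\frac{\alpha+n+1}{4}}|^4 + |D^2 u^{\frac{\alpha+n+1}{2}}|^2 + |\nabla u^{\frac{\alpha+m}{2}}|^2\Big),
\]
where $\alpha\in(-1,0)$ is admissible in \eqref{eq-alphaentropy}. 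This forces $\alpha>\max\{\tfrac12-n,-m,-1\}$, and the range is nonempty because $2-n\le 0$ and $m>1$. Since $\alpha(\alpha+1)<0$, the time-boundary term in \eqref{eq-alphaentropy} has the favorable sign after dividing. The initial contribution $\int\zeta^4u_0^{\alpha+1}$ vanishes on $B_R$ because $u_0=0$ there. We take $\zeta$ supported in $B_{s}$, equal to $1$ on $B_{s-\delta}$, with $|\nabla\zeta|\lesssim\delta^{-1}$ and $|D^2\zeta|\lesssim\delta^{-2}$. This yields
\[
E(s-\delta,T)\lesssim \delta^{-4}\int_0^T\!\!\int_{B_s\setminus B_{s-\delta}} u^{\alpha+n+1} + \delta^{-2}\int_0^T\!\!\int_{B_s\setminus B_{s-\delta}} u^{\alpha+m}.
\]

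The core step is to bound both right-hand integrals by superlinear powers of $E(s,T)$, with a factor $T^{\theta}$, $\theta>0$. We use Gagliardo–Nirenberg interpolation on the annulus, or on the ball after extending by zero as in Bernis. The quantity $u^{\alpha+n+1}$ is controlled by interpolating between $\sup_t\|u\|_{L^{\alpha+1}}$ and the $L^2_tH^2$ norm of $u^{(\alpha+n+1)/2}$. The quantity $u^{\alpha+m}$ is controlled by interpolating between the same $L^{\alpha+1}$ norm and the $L^2_tH^1$ norm of $u^{(\alpha+m)/2}$. Because $u^{\alpha+m}=(u^{(\alpha+m)/2})^2$ and $\alpha+m>\alpha+1$, Hölder in time together with the $\sup_t$ control gives a bound $T^{\theta_2}E^{1+\epsilon_2}$. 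Similarly, $\alpha+n+1>\alpha+1$ gives $T^{\theta_1}E^{1+\epsilon_1}$. Here one uses $d\le 3$ so the Sobolev exponents work, and the boundary terms in the interpolation are absorbed via the full $H^1$ bound $u\in L^\infty H^1$ if needed. The result is an iterative inequality
\[
E(s-\delta,T)\le C\big(\delta^{-4}T^{\theta_1}E(s,T)^{1+\epsilon_1} + \delta^{-2}T^{\theta_2}E(s,T)^{1+\epsilon_2}\big).
\]

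Finally, we apply the standard iteration lemma (e.g.\ the one used in \cite{BernisFSOP} and \cite{GrnHabWTP}) to the nonincreasing function $s\mapsto E(s,T)$ on $[r,R]$. We bound $E(R,T)$ a priori using \eqref{eq-alphaentropy} with a larger cutoff and the uniform $L^1\cap H^1$ bounds, so that it is finite and tends to zero as $T\to0$, or at least stays bounded. A two-term inequality of this form then gives $E(r,T)=0$ for all $T$ small enough that $T^{\theta_i}E(R,T)^{\epsilon_i}\lesssim (R-r)^{k_i}$. The two terms are handled by taking the worse of the two exponent pairs, or by splitting the iteration accordingly. Therefore $u\equiv0$ on $B_r(x_0)\times(0,T)$, so $T_r(x_0)\ge T>0$.

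The main obstacle is the interpolation step. The exponents must produce a genuine superlinear power $1+\epsilon_i$ together with a positive power of $T$ for both the fourth-order and the second-order terms simultaneously, with a single admissible $\alpha\in(-1,0)$. This is where the constraints $n\in[2,3)$, $m\in(1,2)$ and $d\le 3$ enter. If a single $\alpha$ fails, we would instead use the $\sup$ in time of the $H^1$ norm (from \eqref{eq-energyest1}) as an auxiliary bounded quantity in the interpolation, keeping only a power of $E$ strictly greater than one.
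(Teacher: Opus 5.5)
There is a genuine gap, and it sits at the first step. Since $n\ge 2$, admissibility in \eqref{eq-alphaentropy} forces $\alpha<2-n\le 0$, so $\alpha\in(-1,0)$ and $\alpha(\alpha+1)<0$. With $u_0=0$ on $B_R$, the time-boundary term on the left of \eqref{eq-alphaentropy} is $\frac{1}{\alpha(\alpha+1)}\int\zeta^4u(T)^{\alpha+1}\le 0$. It is a loss, not a gain, and moving it across gives only
\[
\int_0^T\!\!\int_{\R^d}\zeta^4\Big(\abs{\nabla u^{\frac{\alpha+n+1}{4}}}^4+\abs{\nabla u^{\frac{\alpha+m}{2}}}^2\Big)\lesssim\frac{1}{\abs{\alpha}(\alpha+1)}\int_{\R^d}\zeta^4u(T)^{\alpha+1}+(\text{cut-off terms}).
\]
So for $\alpha<0$ the local entropy $\int u^{\alpha+1}$ \emph{grows}; this is exactly the monotonicity used in Lemma \ref{lemma-entropy-lower}. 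Consequently $\sup_t\int_{B_s}u^{\alpha+1}$ cannot belong to a quantity controlled by \eqref{eq-alphaentropy}.

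Your recursive inequality for $E$ therefore does not follow. The interpolation step fails with it, because your superlinear exponents $1+\epsilon_i$ came precisely from interpolating against $\sup_t\norm{u}_{L^{\alpha+1}}$. Neither repair works. Bounding $\int_{B_s}u^{\alpha+1}\lesssim s^{-d\alpha}(\int_{B_s}u)^{\alpha+1}$ introduces a sublinear power of the local mass, which destroys the Stampacchia structure. Using $\norm{u}_{L^\infty_tH^1}$ as an auxiliary bounded quantity contributes a fixed constant rather than a power of $E$, leaving only a power $a<1$ of the dissipation. This is the known obstruction: for $n<2$ one can take $\alpha\in(0,2-n)$, where the sign is favorable, but for $n\ge 2$ no such $\alpha$ exists. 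The paper notes explicitly that it is open whether \eqref{eq-alphaentropy} alone yields finite speed of propagation when $n\ge2$.

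The missing ingredient is the energy estimate \eqref{eq-energyest1}, whose boundary term $\int\zeta^6\abs{\nabla u}^2$ has the right sign; the $H^1$-dissipativity of the porous-medium part for $m\in(1,2)$ is what makes it available. The paper pairs it with a local $L^2$ estimate obtained by testing the weak form with $\varphi^6u$. There the porous-medium term gives the favorable $-\int\!\int\varphi^6\abs{\nabla u^{\frac{m+1}{2}}}^2$, and the thin-film term is bounded via Hölder by the energy dissipation. The remaining integrals $\delta^{-2}\int\!\int u^{m+1}$ and $\delta^{-4}\int\!\int u^{n+2}$ are interpolated against the local $L^1$ norm. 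This gives powers $m+1$ and $n+2$ of the local mass, which are superlinear relative to $(\sup_t\int_{B_s}u)^2\le\mathcal L^d(B_s)\sup_t\int_{B_s}u^2$.

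No zero extension is available in this step, contrary to what you suggest. Interpolating $\varphi u^{\frac{m+1}{4}}$ and $\varphi u^{\frac{n+2}{6}}$ reproduces those same integrals on a slightly larger ball with a small factor. The paper closes this with the two-loop iteration of Lemma \ref{lemma-iteration-trick}, which yields Lemma \ref{lemma-local-L1}. Finally, Lemma \ref{lemma-stampachhia-wait} with $\kappa=0$ is applied to $M=(\sup_{T_{R_0}\le t\le T}\int_{B_{R_0-s}}u)^2$. One starts at $t_0=T_{R_0}$, where the initial terms in \eqref{eq-local-L1} vanish on $B_{R_0}$.
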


In relation to Question \ref{q2}, we provide rates of propagation in terms of upper and lower bounds on $T_w$. In order to answer Question \ref{q3}, recall that for a given point $x_0 \in \R^d\setminus \mathrm{supp}\, u_0$, we have defined 
\[
R_0(x_0) := \mathrm{dist}\left(x_0, \mathrm{supp}\, u_0 \right).
\]
The next result gives a lower bound for $T_w(x_0)$ along with sufficient conditions on the initial data such that mass does not immediately ``move towards $x_0$'', i.e., $T_{R_0}(x_0)>0$. In the following, we suppress the dependence of $T_w, T_{R_0}$, and $R_0$ on a given point $x_0$. 
\begin{thm}\label{thm-main-result-lower}   
Let $u$ be a strongly energy dissipating solution satisfying \eqref{ass-u0} and \eqref{ass-nm}. There exists a constant $c_1>0$ depending only on $(n,m,d)$ such that for any point $x_0\in \Omega \setminus \mathrm{supp} \ u_0$, the waiting time at $x_0$, $T_w$, admits the following lower bound: 
\begin{equation}\label{eq-spreadingrate-lower}
T_w\, \geq\, T_{R_0} + c_1\min\left\{ \norm{u_0}_{L^1}^{1-m} R_0^{d(m-1)+2}, \norm{u_0}_{L^1}^{-n} R_0^{dn+4} \right\}.
\end{equation} 
Furthermore, if there exist $\kappa = \kappa(x_0) \in (0,\infty) $ and $r_0 = r_0(x_0) \in (0,\infty)$ such that 
\begin{equation}
    \label{eq-wtp-flatness-condition}
    \sup_{r\leq r_0} \, \frac{r^2}{r^{d+\sigma}} \int_{B_{R_0+r}\setminus B_{R_0}} \abs{\nabla u_0}^2 \leq \kappa^2, \ \ \text{where } \  \sigma:= \max \left\{ \frac{8}{n}, \frac{4}{m-1} \right\}  ,
\end{equation}
then the time $T_{R_0}$ is also positive, and there exists $c_2>0$ depending only on $n,m,d$ such that 
\begin{equation}
    \label{eq-wtp-duration} T_{R_0} \, \geq \,c_2\min\left\{ r_0^{2+\frac{d(m-1)}{2}}\left( \kappa^2r_0^{d+\sigma} + \norm{u}_{L^\infty_t L^2_x}^2\right)^{\frac{1-m}{2}} \, , \, r_0^{4+\frac{dn}{2}}\left( \kappa^2r_0^{d+\sigma} + \norm{u}_{L^\infty_t L^2_x}^2\right)^{-\frac{n}{2}} \right\}.
\end{equation}

\end{thm}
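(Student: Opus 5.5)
The plan is to combine a localized energy method of Stampacchia/Bernis type with the two-scale structure of the nonlinearities. Fix $x_0$ and write $B_r := B_r(x_0)$. For $0 < r < R_0$ define the localized quantity
\[
E(r,T) := \sup_{t\in[T_{R_0},T]} \int_{B_r} |\nabla u(t)|^2 \;+\; \int_{T_{R_0}}^T \int_{B_r} \Big( |\nabla\Delta u^{\frac{n+2}{2}}|^2 + |\nabla u^{\frac{n+2}{6}}|^6 + |D^2 u^{\frac{m+1}{2}}|^2 + |\nabla u^{\frac{m+1}{4}}|^4 \Big).
\]
By definition of $T_{R_0}$, $u(t)$ vanishes on $B_{R_0}$ for $t\le T_{R_0}$, so $E(R_0,T_{R_0}) = 0$. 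Apply \eqref{eq-energyest1} with cutoffs $\zeta$ that equal $1$ on $B_r$ and are supported in $B_{r+\delta}$, and take $t_0 = T_{R_0}$. The right-hand side is then controlled by
\[
\delta^{-6}\int\!\!\int_{B_{r+\delta}\setminus B_r} u^{n+2} \;+\; \delta^{-4}\int\!\!\int_{B_{r+\delta}\setminus B_r} u^{m+1}.
\]
Letting $\delta\to 0$ in the usual way converts this into a differential inequality in $r$ involving $\partial_r E$.

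\emph{Step 1 (interpolation).} Bound the annular integrals of $u^{n+2}$ and $u^{m+1}$ by the dissipation terms in $E$ and by mass, using Gagliardo–Nirenberg inequalities adapted to each nonlinearity.
\begin{itemize}
\item For the fourth-order part, interpolate $u^{\frac{n+2}{2}}$ between $H^3$ (more precisely $\nabla\Delta$) and $L^{2/(n+2)}$, which is controlled by $\|u_0\|_{L^1}$ (mass is conserved or nonincreasing).
\item For the second-order part, interpolate $u^{\frac{m+1}{2}}$ between $H^2$ and $L^{2/(m+1)}$.
\end{itemize}
Both interpolations must hold on sets where $u$ vanishes somewhere (the inner ball), so the Poincaré/GN constants carry no lower-order term. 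Done separately and consistently with the scaling of each operator, this yields an inequality of the form
\[
E(r) \lesssim \big(T-T_{R_0}\big)^{\theta_1} M^{\beta_1} \big(-\partial_r E\big)^{1+\epsilon_1} + \big(T-T_{R_0}\big)^{\theta_2} M^{\beta_2} \big(-\partial_r E\big)^{1+\epsilon_2}.
\]
The exponents are fixed by dimensional analysis: $M^{-n}R^{dn+4}$ is the natural time scale for the thin-film part and $M^{1-m}R^{d(m-1)+2}$ for the porous-medium part.

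\emph{Step 2 (ODE comparison, Stampacchia lemma).} Since each branch has superlinear dependence on $-\partial_r E$, $E$ must vanish at some radius $r_*>0$. The required shrinkage $R_0 - r_*$ is bounded by a sum of the two branch contributions. Choosing $T - T_{R_0} = c_1 \min\{\dots\}$ makes both contributions less than $R_0/2$, so $u\equiv 0$ near $x_0$ up to time $T$. This gives \eqref{eq-spreadingrate-lower}. Theorem \ref{thm-fsop} follows as a byproduct, or can be assumed.

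\emph{Step 3 (waiting time).} Redo the argument starting from $t_0=0$, now on balls $B_{R_0+r}$ with $r\le r_0$, and run the radius inward toward $B_{R_0}$. The initial term $\int_{B_{R_0+r}\setminus B_{R_0}} |\nabla u_0|^2$ no longer vanishes; by the flatness condition \eqref{eq-wtp-flatness-condition} it is bounded by $\kappa^2 r^{d+\sigma-2}$. The differential inequality becomes inhomogeneous, with source $\kappa^2 r^{d+\sigma-2}$.

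The choice $\sigma = \max\{8/n,\,4/(m-1)\}$ is exactly what makes the source decay at least as fast as the homogeneous solution profile $r^{\text{power}}$ of each branch: the ODE $E \sim (-E')^{1+\epsilon}$ has solutions of the form $(r-r_*)^{(1+\epsilon)/\epsilon}$. With this, a Stampacchia-type comparison shows that $E(R_0,\cdot)$ stays zero for times bounded below as in \eqref{eq-wtp-duration}. Here the mass is replaced by $\|u\|_{L^\infty_t L^2_x}$ together with the $\kappa$-term, because the GN interpolation now uses the $L^2$ and local $H^1$ bounds rather than $L^1$.

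The main obstacle is Step 1 together with the inhomogeneous comparison in Step 3. One must prove GN inequalities on annuli for powers of $u$ with the correct exponents. One must also show that the two scaling branches can be treated with one function $E$ without spoiling either branch's optimal exponent. I would first try splitting $E$ into its thin-film and porous-medium parts and running a coupled comparison, taking the worse of the two at each radius.
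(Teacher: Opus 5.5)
There is a genuine gap in Step 1. With the local energy $E$ as your comparison variable, the interpolation you describe cannot produce the superlinear inequality you write down. You bound each annular term by its own dissipation and the mass. But $\abs{\nabla\Delta u^{\frac{n+2}{2}}}^2$ and $\abs{\nabla u^{\frac{n+2}{6}}}^6$ have the same degree $n+2$ in $u$ as $u^{n+2}$ (likewise degree $m+1$ for the porous-medium terms). Since the mass enters with a nonnegative power, the exponent on the dissipation is at most one. Gagliardo--Nirenberg in fact gives $\frac{d(n+1)}{d(n+1)+6}$ and $\frac{dm}{dm+4}$, both $<1$; these are exactly the exponents $\theta,\vartheta$ in the proof of Lemma \ref{lemma-local-L1}. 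After H\"older in time you get $E(r)\lesssim \delta^{-6}(T-T_{R_0})^{1-a}\norm{u_0}_{L^1}^{(n+2)(1-a)}D^{a}+\dots$ with $a<1$ and $D$ the dissipation on $B_{r+\delta}$. This is sublinear and yields no extinction radius.

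To get total degree above one you would also have to use $\sup_t\int\abs{\nabla u}^2$, as in the classical energy method. But then the conclusion depends on the starting value $E(R_0,T)$, a local energy you never bound by $\norm{u_0}_{L^1}$, so \eqref{eq-spreadingrate-lower} does not follow. Two further claims fail as stated:
\begin{itemize}
\item You cannot drop the lower-order Gagliardo--Nirenberg terms ``because $u$ vanishes on the inner ball''. That vanishing is the conclusion, and on bounded balls the cutoff creates lower-order terms.
\item $\delta^{-6}\int\!\!\int_{B_{r+\delta}\setminus B_r}u^{n+2}$ is not a difference quotient of $E$, so letting $\delta\to0$ does not give an ODE in $r$. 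Also $E(\cdot,T)$ is nondecreasing in $r$, so $(-\partial_rE)^{1+\epsilon}$ has the wrong sign.
\end{itemize}

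The paper's remedy is to change the controlled quantity to the \emph{local mass}. It couples \eqref{eq-energyest1} with a local $L^2$-estimate obtained by testing with $\varphi^6u$; the thin-film flux is bounded by H\"older via the energy dissipation, and the porous-medium part has a good sign. It then interpolates $\int u^{m+1}$ and $\int u^{n+2}$ against the mass on a slightly larger ball and absorbs the dissipation with Young's inequality. The absorbed terms and the cutoff-generated terms live on larger balls, so the two-loop iteration Lemma \ref{lemma-iteration-trick} is needed to close.

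The result, Lemma \ref{lemma-local-L1}, has degree $2$ on the left, via $\int_{B_s}u^2\ge\mathcal L^d(B_s)^{-1}(\int_{B_s}u)^2$, and degrees $m+1$ and $n+2$ in the local mass on the right. That is where the superlinearity comes from, and the starting value is at most $\norm{u_0}_{L^1}^2$. With $t_0=T_{R_0}$ the initial terms vanish. The multi-weight Stampacchia Lemma \ref{lemma-stampachhia-wait} with $\kappa=0$ then bounds $T_r$ from below, and letting $r\to0$ gives \eqref{eq-spreadingrate-lower}.

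Your Step 3 has the same gap: interpolating the dissipation against the global $L^2$ norm is again sublinear. The paper applies Lemma \ref{lemma-local-L2} with the local quantity $W(T,s)=\sup_{t\le T}\int_{B_{R_0+r_0-s}}u(t)^2$ as the variable. It bounds the initial terms by $\kappa^2(r_0-s)^{d+\sigma}$ via Poincar\'e. It then checks $\max_i\alpha_i/(\beta_i-1)=d+\sigma$ for $\beta_i\in\{\frac{m+1}{2},\frac{n+2}{2}\}$, which is the precise form of your heuristic about $\sigma$.
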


In the case that $\mathrm{supp}\, u_0 \subset \subset \R^d$, Theorem \ref{thm-main-result-lower} allows us to construct, for given $x_0\in \R^d$, a function $R(x_0;t)$ such that $\mathrm{supp}\, u(t) \subset B(x_0,R(x_0;t))$ for all times $t>0$. This is the content of the following corollary:

\begin{col}\label{col-biggest-ball}
    Let $u$ be a strongly energy dissipating solution satisfying \eqref{ass-u0} and \eqref{ass-nm}. Define the following time:
    \begin{equation}
        \label{eq-switching-time}
        t^* := c_1\norm{u_0}_{L^1}^{\frac{2(n+2-2m)}{d(n+1-m)+2}}.
    \end{equation}
    Then any $x_0\in \R^d$, the following holds for every $t>0$:
    \begin{equation}
        \label{eq-biggest-ball}
        \mathrm{supp}\, u(t) \subset \, B(x_0, R(x_0;t)),
    \end{equation}
    where the function $R(x_0;t)$ is defined by 
    \begin{equation}
        \label{eq-biggest-ball-radius}
        R(x_0; t) = R_0(x_0) + \mathrm{diam}(\mathrm{supp}\, u_0) + \begin{cases}
            \left(c_1\inv \norm{u_0}_{L^1}^n t \right)^{\frac{1}{dn+4}} & t\leq t^* \\
            \left(c_1\inv \norm{u_0}_{L^1}^{m-1} t \right)^{\frac{1}{d(m-1)+2}} & t> t^* 
        \end{cases}.
    \end{equation}
\end{col}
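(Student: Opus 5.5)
The plan is to read off the corollary from the lower bound \eqref{eq-spreadingrate-lower} of Theorem \ref{thm-main-result-lower}, applied at points \(y\) far from \(\mathrm{supp}\, u_0\). Since \(T_{R_0}\geq 0\), that bound gives, for every \(y\notin \mathrm{supp}\, u_0\),
\[
T_w(y)\,\geq\, c_1\min\left\{ \norm{u_0}_{L^1}^{1-m} R_0(y)^{d(m-1)+2},\, \norm{u_0}_{L^1}^{-n} R_0(y)^{dn+4} \right\} =: F(R_0(y)).
\]
The map \(F\) is continuous and strictly increasing in \(R_0\), and \(F(0)=0\), \(F(\infty)=\infty\). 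The two branches cross at
\[
\rho^* = \norm{u_0}_{L^1}^{\frac{n+1-m}{d(n+1-m)+2}}.
\]
A direct computation shows that \(F(\rho^*)\) equals exactly \(t^*\) as defined in \eqref{eq-switching-time}. For \(R_0<\rho^*\) the fourth-order branch \(\norm{u_0}_{L^1}^{-n}R_0^{dn+4}\) is the smaller one, and for \(R_0>\rho^*\) the second-order branch is smaller. This is a one-line check: the ratio of the two branches is \((R_0/\rho^*)^{d(n+1-m)+2}\). Inverting \(F\) at time \(t\) therefore gives \(F^{-1}(t)=(c_1^{-1}\norm{u_0}^n t)^{1/(dn+4)}\) for \(t\leq t^*\), and \((c_1^{-1}\norm{u_0}^{m-1}t)^{1/(d(m-1)+2)}\) for \(t>t^*\). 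This is precisely the last term of \eqref{eq-biggest-ball-radius}.

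Next I show that \(\mathrm{supp}\, u(t)\subset\{y:\ R_0(y)\leq F^{-1}(t)\}\). Take \(y\) with \(R_0(y)>F^{-1}(t)\). Then \(T_w(y)\geq F(R_0(y))>t\). Since \(T_r(y)\) increases to \(T_w(y)\) as \(r\to 0\), there is a small \(r\) with \(T_r(y)>t\). By continuity of \(u\) in \(C(\R_+;L^1_\loc)\) and the definition of \(T_r\), \(\int_{B_r(y)}u(s)=0\) for all \(s\leq t\). Hence \(y\notin \mathrm{supp}\, u(t)\), with the support understood in the essential sense. Points with \(R_0(y)\) exactly at the threshold lie on a closed set, which causes no issue.

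Finally, a triangle inequality converts this into a ball around an arbitrary \(x_0\). If \(R_0(y)\leq F^{-1}(t)\), choose \(z\in\mathrm{supp}\, u_0\) with \(|y-z|\leq F^{-1}(t)\); such a \(z\) exists because the support is compact. Choose also \(z_0\in\mathrm{supp}\, u_0\) with \(|x_0-z_0|=R_0(x_0)\). Then
\[
|y-x_0|\leq F^{-1}(t)+\mathrm{diam}(\mathrm{supp}\, u_0)+R_0(x_0)=R(x_0;t).
\]
When \(x_0\in\mathrm{supp}\, u_0\) we have \(R_0=0\) and the same argument applies. The strict versus non-strict inclusion in the ball is harmless: one can run the argument with \(F^{-1}(t)+\epsilon\) and let \(\epsilon\to0\), or accept the closed ball.

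The only real content lies in two places: the exponent bookkeeping showing that the crossover time of \(F\) matches \(t^*\), and correctly passing from \(T_w\) to vanishing on a neighbourhood. I expect the exponent check to be the main place where a slip could occur. It reduces to verifying
\[
-n+\frac{(n+1-m)(dn+4)}{d(n+1-m)+2}=\frac{2(n+2-2m)}{d(n+1-m)+2},
\]
which holds since \((n+1-m)(dn+4)-n\bigl(d(n+1-m)+2\bigr)=4(n+1-m)-2n=2(n+2-2m)\).
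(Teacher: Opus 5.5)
Your proof is correct and follows the paper's argument. You drop $T_{R_0}\geq 0$ in the lower bound of Theorem~\ref{thm-main-result-lower}, invert it to get $R_0(y)\leq F^{-1}(t)$ for $y\in\mathrm{supp}\,u(t)$, pass to a ball around $x_0$ by the triangle inequality through $\mathrm{supp}\,u_0$, and identify the crossover of the two branches with $t^*$. Your justification of $T_w(y)>t\Rightarrow y\notin\mathrm{supp}\,u(t)$ via $T_r(y)\uparrow T_w(y)$, and your explicit check that $F(\rho^*)=t^*$, are more careful than the paper's.

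As in the paper, what you actually obtain is the closed ball of radius $R(x_0;t)$; your $\epsilon$-remark does not upgrade this to the open ball. This is harmless, since the ball only enters later through integrals and measure bounds.
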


Theorem \ref{thm-main-result-lower} and Corollary \ref{col-biggest-ball} concern themselves with the maximum rate of propagation. In order to discern the optimality of the estimate \eqref{eq-spreadingrate-lower}, a lower bound on the propagation rate is necessary. This is the content of the following result, which verifies that the propagation rates derived here are asymptotically optimal. 

\begin{thm}
    \label{thm-main-result-upper}
     Let $d\in \{1,2,3\}$ and $u$ be a strongly energy dissipating solution satisfying \eqref{ass-u0} and \eqref{ass-nm}. Furthermore, assume that $m>4/3$ if $d=3$. Then there exists a constant $c_3>0$ depending only on $(n,m,d)$ such that for any $x_0\in \R^d\setminus \mathrm{supp}\, u_0$, we have: 
     \begin{equation}
         \label{eq-spreading-rate-upper}
         T_w \leq c_3\min\left\{ \norm{u_0}_{L^1}^{1-m} \left(R_0+\mathrm{diam}(\mathrm{supp}\, u_0)\right)^{d(m-1)+2}, \norm{u_0}_{L^1}^{-n}\left(R_0+\mathrm{diam}(\mathrm{supp}\, u_0)\right)^{dn+4} \right\} . 
     \end{equation}
        
\end{thm}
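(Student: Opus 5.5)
We prove the upper bound \eqref{eq-spreading-rate-upper} by contradiction, using a weighted moment functional. Fix $x_0\in\R^d\setminus\mathrm{supp}\,u_0$ and set $L:=R_0+\mathrm{diam}(\mathrm{supp}\,u_0)$, so that $\mathrm{supp}\,u_0\subset B_L(x_0)$. Suppose that $T_w>T$. Then for a.e. $t<T$ the function $u(t)$ vanishes on some small ball $B_\rho(x_0)$; in fact we only need this at a single small scale.

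Since $\mathrm{supp}\,u_0$ is compact, the propagation bound of Corollary~\ref{col-biggest-ball} lets us test \eqref{eq-weakform} with $\varphi$ equal to a suitable power of $|x-x_0|$, modified near $x_0$ and cut off far away. The natural choice is $\varphi(x)=|x-x_0|^{2}$. With this choice, $\Delta\varphi=2d$ and $\nabla\Delta\varphi=0$. Integrating by parts, the fourth-order term gives a contribution controlled by $\int u^n|\nabla u|^2$-type quantities, and the second-order term gives $-\frac{2d}{m}\int u^m$. Moreover, the mass $M=\|u_0\|_{L^1}$ is conserved. This yields a second-moment identity of the form
\[
\frac{d}{dt}\int |x-x_0|^2u \;=\; c\int u^m + c'\int u^{n}\ldots .
\]

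Conversely, if $u(t)$ avoids $B_\rho(x_0)$ while staying inside $B_{R(t)}$, we also need to show that the moment cannot grow too much; equivalently, that mass concentrated away from $x_0$ keeps $\int u^m$ small. By Jensen's inequality, $\int u^m\gtrsim M^m R(t)^{-d(m-1)}$, so the second moment grows at least like $M^m R^{-d(m-1)}t$. The second moment is bounded by $ML^2$ initially and by $M R(t)^2$ in general. Combining these two inequalities with the contradiction hypothesis gives a time scale $T\lesssim M^{1-m}L^{d(m-1)+2}$.

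The harder direction is the thin-film term, which gives the alternative scale $M^{-n}L^{dn+4}$. For this we would use a weight adapted to the fourth-order part, for example $\varphi=|x-x_0|^{4}$-type weights. Alternatively, we can use the entropy \eqref{eq-alphaentropy} with $\alpha$ close to $2-n$ and the functional $\int\zeta^4u^{\alpha+1}$, where $\zeta$ vanishes near $x_0$. The key point is that on the contradiction hypothesis the solution retains a nondegenerate profile on an annulus around $x_0$, while dissipation forces the $L^1$ mass to spread to scale at least $(M^n t)^{1/(dn+4)}$. This lower bound on spreading comes from a Gagliardo–Nirenberg inequality applied to $u^{(\alpha+n+1)/2}$: we control $\int u^{\alpha+1}$ from below by $M$ and the support size, and combine this with the dissipation bound.

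The restriction $m>4/3$ when $d=3$ enters exactly through the interpolation exponents in this step. The main obstacle is handling the hole at $x_0$: the weight must be singular or cut off there while its boundary terms stay signed. I would first try the weight $|x-x_0|^{2}$ combined with the radial ordering of the free boundary, and fall back to an annular cutoff with error terms absorbed via \eqref{eq-energyest1}. Taking the minimum of the two time scales gives \eqref{eq-spreading-rate-upper}.
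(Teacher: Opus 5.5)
\textbf{Gap 1: the second-moment argument never uses that $x_0$ is dry.} The step ``combining these two inequalities with the contradiction hypothesis'' hides the problem. Neither $\int\abs{x-x_0}^2u(t)\le \norm{u_0}_{L^1}R(t)^2$ nor $\frac{\mathrm{d}}{\mathrm{d}t}\int\abs{x-x_0}^2u\gtrsim\norm{u_0}_{L^1}^mR(t)^{-d(m-1)}$ involves the ball around $x_0$ on which $u(t)$ vanishes. So, even granting the second one, both hold for every solution at every time. A contradiction at a finite time would then contradict global existence.

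Concretely, Corollary~\ref{col-biggest-ball} gives $R(t)^{d(m-1)+2}\ge c_1^{-1}\norm{u_0}_{L^1}^{m-1}t$. With this, the integrated lower bound $\norm{u_0}_{L^1}^m\int_0^tR(s)^{-d(m-1)}\,\mathrm{d}s$ and the upper bound $\norm{u_0}_{L^1}R(t)^2$ scale identically in $t$ and $\norm{u_0}_{L^1}$. At best you get a relation between constants, never a bound on $T_w$.

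An averaged lower bound on spreading is compatible with the support growing in other directions or enclosing a hole around $x_0$. Since there is no comparison principle, detecting arrival at a \emph{specific} point needs a functional that is large where $u$ is absent. Your weight $\abs{x-x_0}^2$ vanishes at $x_0$, and your fallback cutoff $\zeta$ vanishing near $x_0$ discards exactly the region that carries the information.

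\textbf{Gap 2: the claimed monotonicity is false.} The thin-film contribution $2\int u^n\nabla\Delta u\cdot(x-x_0)$ is sign-definite only for $n=1$. For $n\neq 1$, already in $d=1$, integrating by parts leaves a term proportional to $n(n-1)\int (x-x_0)u^{n-2}u_x^3$, which has no sign. So you cannot drop it to isolate the porous-medium growth. In addition, the thin-film scale $\norm{u_0}_{L^1}^{-n}L^{dn+4}$ is only sketched as a list of alternatives, none carried out.

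\textbf{The missing idea} is Fischer's weighted entropy with a weight that is \emph{singular} at $x_0$, as in Lemma~\ref{lemma-entropy-monotonicity}: $y(t)=\int\abs{x-x_0}^\gamma u(t)^{\alpha+1}$ with $\gamma\le -d$. It is finite for $t<T_w$ precisely because $u(t)$ vanishes on a ball around $x_0$. The two terms are handled as follows.
\begin{itemize}
\item Since $\Delta\abs{x-x_0}^\gamma=\gamma(\gamma+d-2)\abs{x-x_0}^{\gamma-2}\ge 0$, the porous-medium term contributes $\tau_1\int\abs{x-x_0}^{\gamma-2}u^{\alpha+m}$.
\item Fischer's analysis of the fourth-order term gives $\tau_2\int\abs{x-x_0}^{\gamma-4}u^{\alpha+n+1}$.
\item H\"older's inequality on $\mathrm{supp}\,u(t)\subset B(x_0,R(t))$ turns these into the superlinear inequalities $y'\ge q_1y^{(\alpha+m)/(\alpha+1)}$ and $y'\ge q_2y^{(\alpha+n+1)/(\alpha+1)}$.
\end{itemize}
Each inequality forces $y$ to blow up in finite time, so $T_w$ must come first.

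The rest is bookkeeping.
\begin{itemize}
\item Bound $y(t_0)$ from below via Lemma~\ref{lemma-entropy-lower}. This is where your Gagliardo--Nirenberg idea belongs. It is also where $m>4/3$ for $d=3$ enters, through the interpolation of $\nabla u^{(\alpha+m)/2}$ in the porous-medium part, not the thin-film part.
\item Bound $\int q_i$ from below using Corollary~\ref{col-biggest-ball}, and check that the powers of $\norm{u_0}_{L^1}$ cancel.
\item Treat the cases $t_0\ge t^*$, $T_w<t^*$, and $t_0<t^*<T_w$ separately, because the scaling of $R(t)$ changes at $t^*$.
\end{itemize}
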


\subsection{Discussion of Results}

In order to properly contextualize the new results presented above, we begin this section by summarizing known results about solutions to \eqref{eq-pme}, \eqref{eq-tfe}, and \eqref{eq-TFPM}. Beginning with solutions of the porous-medium equation (with $m>1$), it is well-known that the extent of the free boundary is Hölder continuous in time with optimal exponent $\frac{1}{d(m-1)+2}$, which is established via comparison principles \cite{Aronson70,Knerr77}. It is also well-known (due to, e.g., \cite{Alikakos_PME_WTP,Chipot/Sideris}) that the optimal growth condition on the initial support that ensures that the support does not immediately move towards a given point (by this, we mean $T_{R_0}>0$) is given by 
\[
    \sup_{r\leq r_0} \, r^{-d-\frac{2}{m-1}} \, \int_{B_{R_0+r}\setminus B_{R_0}} u_0(x) \, \mathrm{d}x \,  < +\infty.
\]
Note that the condition is imposed on the mean value of $u_0$, which is not the case in Theorem \ref{thm-main-result-lower}, where a condition is imposed on the mean value of the squared gradient, which is generally stronger. Nonetheless, if one takes $u_0$ to be a power law near its boundary, i.e., $u_0 \sim \left(\abs{x-x_0 }-R_0(x_0)\right)_+^\gamma$ for some $\gamma>0$, then the above condition is equivalent to 
\[
      \sup_{r\leq r_0} \, r^{2-d-\frac{4}{m-1}} \, \int_{B_{R_0+r}\setminus B_{R_0}} \abs{\nabla u_0}^2 \, \mathrm{d}x \,  < +\infty .
\]

Note the the exponent of $r$ that appears above is precisely one of two exponents considered in the growth condition \eqref{eq-wtp-flatness-condition}.

Concerning solutions to \eqref{eq-tfe} for $n\in [2,3)$, similar results exist under certain regularity conditions, this usually being the so-called zero-contact-angle condition, which we we also enforce. For $n\in (0,3)$, this property may be deduced from the so-called $\alpha$-entropy estimate, which is identical to \eqref{eq-alphaentropy} without the terms depending on $m$ \cite{oldnonnegsol,DPGG}. For $n<2$, the $\alpha$-entropy estimate is also sufficient to prove finite speed of propagation, but it is unknown if this is possible for $n\geq 2$. All known results providing upper bounds on support propagation rates for $n\geq 2$ rely on energy estimates like \eqref{eq-energyest1}. Using energy-based techniques, it is known due to \cite{BernisFSOP,BernisFSOP2,Shishkov/Hulshof,BasicFSOP} that the extent of the free-boundary of weak solutions is Hölder continuous with optimal exponent $\frac{1}{dn+4}$. We note that the optimality of the result was proven much later in \cite{Fischer2016}. Regarding sufficient conditions which ensure that $T_{R_0}>0$, it was first proven by energy methods in \cite{GGDP} that the support does not immediately move towards a given point for $n\in [2,3)$ and $d=1$ if 
\[
          \sup_{r\leq r_0} \, r^{2-d-\frac{8}{n}} \, \int_{B_{R_0+r}\setminus B_{R_0}} \abs{\nabla u_0}^2 \, \mathrm{d}x \,  < +\infty.
\]
This was then extended to dimensions two and three in \cite{GrnHabWTP}. Note that the exponent of $r$ appearing here is the other exponent in the growth condition \eqref{eq-wtp-flatness-condition}. Although not at all obvious, this sufficient condition turns out to not be necessary: It was proven (over 20 years later) in \cite{DeNitti/Fischer} that a condition on the mean value of $u_0$ is also sufficient,
\[
          \sup_{r\leq r_0} \, r^{-d-\frac{4}{n}} \, \int_{B_{R_0+r}\setminus B_{R_0}} u_0(x) \, \mathrm{d}x \,  < +\infty .
\]
Moreover, it was shown in the same work that this condition is optimal in one-dimension. Once again, however, the condition on the mean value is equivalent to the condition on the squared gradient in the case that $u_0$ behaves like a power law near the boundary of the initial support. For an enlightening discussion about these growth conditions, see \cite{DeNitti/Fischer}. 

The existing literature on \eqref{eq-TFPM} with $n\geq 2$ is quite sparse. The only work, to the best of our knowledge, which makes a statement about finite speed of propagation is \cite{Cohen}. There it is proven, under additional assumptions on the relationship between $n$ and $m$, that weak energy dissipating solutions have finite speed of propagation. Beyond this, there are no results on propagation rates or time-delay of support propagation, which makes Theorem \ref{thm-main-result-lower} and Theorem \ref{thm-main-result-upper} the first of their kind for \eqref{eq-TFPM} with $n\geq 2$. For $n<2$, the propagation of solutions is better understood due to the works \cite{GSDP,ST}. In \cite{GSDP}, a result analogous to Corollary \ref{col-biggest-ball} is found to be true, while in \cite{ST}, a sufficient condition to induce a time-delay for the support propagation is also derived using entropy estimates. The barrier to results for $n\geq 2$ is the necessity of a closed energy estimate in the form of \eqref{eq-energyest1}. In \cite{Cohen}, this energy estimate is obtained by absorbing the porous-medium terms in \eqref{eq-energyest1} into the more regular thin-film terms, i.e., the signs of the porous-medium terms are not considered when estimating the $H^1$-norm\footnote{This ties back into the discussion of Remark \ref{rem-existence}. Instead of integrating by parts, the term is estimated and absorbed in \cite[Theorem 6]{Cohen}}. This introduces a restriction on the relationship between $n$ and $m$, but in a sort of tradeoff, it allows one to handle values of $m$ greater than two, which we cannot do here. If one narrows their focus to $m\in (1,2)$, no absorption is necessary, because the porous-medium operator is $H^1$-dissipative for $m\in (1,2)$. This fact allows us to separate the two characteristic scaling behaviors belonging to the individual operators, leading to the optimal results found here. 

The results we give here are also a marginal improvement of the classic energy methods. Namely, upper bounds on the rate of propagation for solutions to \eqref{eq-tfe} and \eqref{eq-TFPM}, derived by energy methods, have always come in the form 
\[
    \mathrm{supp} \, u(t) \subset B_{R(t)},
\]
where $R(t)$ is a Hölder continuous function, i.e., in the form of Corollary \ref{col-biggest-ball}. This approach succeeds in describing the overall picture, but there are two important situations which it cannot fully grasp: Firstly, this approach cannot say anything when topology changes can occur, for instance how quickly a hole in the support will be filled; secondly, this approach does not see the shape of the support at all. By estimating $T_w(x_0)$, both of these gaps are filled. 

In view of the prior discussion about sufficient conditions to induce $T_{R_0}>0$, it seems plausible that the condition \eqref{eq-wtp-flatness-condition} in Theorem \ref{thm-main-result-lower} is too strong and that a simple condition on the mean value of $u_0$ may be sufficient to induce a waiting time phenomenon. It is, however, not clear if the methods presented in \cite{DeNitti/Fischer} (which overcome the need for a condition on the gradient of initial data) are applicable to \eqref{eq-TFPM}. Nonetheless, we recover the classical critical growth profiles for polynomial-type initial data, and this condition still captures one of the main curiosities of \eqref{eq-TFPM}, namely that the initial data must be flatter than \emph{both} of the profiles prescribed by the individual operators in order to delay the onset of propagation.

\subsection{Layout of the Paper \& Prerequisites}\label{section-proof-strategy}

The proofs of the main results rely on several rather technical estimates and formulas, which we have chosen to prove in separate sections before proving the main results. The paper is structured as follows:

\begin{itemize}
    \item Section \ref{section-lemma1} is dedicated to the proofs of two Stampacchia-type inequalities for the evolutions of the local mass and local $L^2$-norm (s. Lemma \ref{lemma-local-L1} and Lemma \ref{lemma-local-L2}). The resulting inequalities are reminiscent of the energy methods of \cite{BernisFSOP,BernisFSOP2,BasicFSOP,Shishkov/Hulshof,GGDP,AnsiniG}, but with the key difference that they are formulated on balls rather than on infinite cones. To obtain these inequalities, we start by following the approach of \cite{GS} before applying a recently-developed iteration trick (s. Lemma \ref{lemma-iteration-trick}), which was developed to handle similar estimates in \cite{Grun_Sauerbrey_Utley}. 

    \item Section \ref{section-lemma2} is dedicated to the technical results needed to prove Theorem \ref{thm-main-result-upper}, of which there are two. Lemma \ref{lemma-entropy-lower} bounds the entropy growth on solutions from below in terms of the Lebesque measure of the support. Lemma \ref{lemma-entropy-monotonicity} is a proof of a weighted monotonicity formula for the entropy. Both of these results are straightforward generalizations of the celebrated work in \cite{FischerWTU,Fischer_Asymptotic}, which were further refined in \cite{Fischer2016}. Some minor changes must be made to accommodate the porous-medium term.

    \item Section \ref{section-lower} contains the proofs of Theorem \ref{thm-fsop}, Theorem \ref{thm-main-result-lower}, and Corollary \ref{col-biggest-ball}. Proving the first two results simply requires a combination of the results of Section \ref{section-lemma1} with a new formulation of Stampacchia's lemma (Lemma \ref{lemma-stampachhia-wait}), which allows for multiple weights in the Stampacchia-type inequality. The proof of Corollary \ref{col-biggest-ball} is a consequence of Theorem \ref{thm-main-result-lower} combined with some elementary geometric reasoning. 

    \item Section \ref{section-upper} is dedicated to the proof of Theorem \ref{thm-main-result-upper}. This proof succeeds by applying the results of Section \ref{section-lemma2} and following the approach of \cite{Fischer_Asymptotic}. Some novelty arises in the application of this argument, which is a consequence of the changing scaling behavior of $R(x_0;t)$ as derived in Corollary \ref{col-biggest-ball}.

\end{itemize}

\begin{remark}
    Concerning notation: throughout this paper, most of the objects and quantities that we are handling depend on a given point $x\in \R^d$. As we did in the statements of the main results, we will exclude this dependence whenever there is no ambiguity in order to disencumber our notation. In particular, this means that the ball $B_r(x)$ is written as just $B_r$, $R_0(x_0)$ as $R_0$, $T_w(x_0)$ as $T_w$, etc.
\end{remark}

Throughout this work, we make frequent use of the Gagliardo--Nirenberg inequality, which we include here for reference. A proof may be found in \cite[Proposition A.1]{GSDP}.
\begin{lemma} 
\label{lemma-GN}
Let $ 1 \le r \le \infty$, $0< q <p$, $ m \in \N_+$
such that
\[
\frac 1r - \frac mN < \frac 1p \,.
\]
If $\mathcal{O} \subset \R^N$ is bounded with piecewise smooth boundary,
then positive constants $c_1$ and $ c_2$ depending only on $\mathcal{O},
r, p, m$ and $q$ exist such that for any $u \in L^q (\mathcal{O})$
satisfying $ D^m u \in L^r (\mathcal{O})$, the following inequality holds:
\begin{equation}
\| u \|_p \le c_1 \|D^m u\|^a_r \| u \|^{1-a}_q + c_2 \| u \|_q
\label{eq-GN}
\end{equation}
where $a = \frac{\frac 1q - \frac 1p}{\frac 1q + \frac m N - \frac
1r}$.
If, in addition, $\mathcal{O}=\R^N$,  then \eqref{eq-GN} holds
with constant $c_1=c(r,p,m,q)$ and $c_2=0$.
\end{lemma}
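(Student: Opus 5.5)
The plan is to prove the whole-space inequality first, using scaling plus interpolation, and then pass to bounded $\mathcal{O}$ by splitting off a polynomial part.

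\textbf{Whole space.} I take $\mathcal{O}=\R^N$ first. For $q\ge 1$ I invoke Nirenberg's classical inequality, whose conditions are exactly $\frac1r-\frac mN<\frac1p$ and $q<p$. The exponent $a$ is forced by the dilation $u_\lambda(x)=u(\lambda x)$. Under $u\mapsto u_\lambda$ one has $\|u_\lambda\|_s=\lambda^{-N/s}\|u\|_s$ and $\|D^m u_\lambda\|_r=\lambda^{m-N/r}\|D^mu\|_r$. Matching powers of $\lambda$ gives the stated $a$, and it also shows that no additive term $c_2\|u\|_q$ can survive, so $c_2=0$.

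For $0<q<1$, where $\|\cdot\|_q$ is only a quasi-norm, I proceed in three steps.
\begin{enumerate}
\item Choose $s>p$ with $s\ge1$ and $\frac1r-\frac mN<\frac1s$.
\item Use log-convexity of $L^t$ norms (Hölder): $\|u\|_p\le\|u\|_q^{\theta}\|u\|_s^{1-\theta}$.
\item Apply the $q\ge1$ case with $\max\{p,1\}$ in place of $q$: $\|u\|_s\le C\|D^mu\|_r^{b}\|u\|_{\max\{p,1\}}^{1-b}$.
\end{enumerate}
If $p\ge1$, Young's inequality absorbs the factor $\|u\|_p^{(1-\theta)(1-b)}$ into the left-hand side. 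If $p<1$, I run the same interpolation with an intermediate exponent in $[1,s)$, so that finitely many absorptions close the argument. The resulting power of $\|D^mu\|_r$ must again equal $a$, since every step is scale-invariant. To make the absorption legitimate, I first prove the estimate for $u\in C_c^\infty$, or more generally for $u$ with $\|u\|_p<\infty$ a priori, and then extend by density or truncation.

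\textbf{Bounded domains.} For bounded $\mathcal{O}$ with piecewise smooth (Lipschitz) boundary, I decompose $u=Pu+v$, where $Pu$ is the projection onto polynomials of degree $<m$.
\begin{itemize}
\item \emph{Choice of projection.} Define $Pu$ through integral moments against fixed smooth weights. This requires $u\in L^1(\mathcal{O})$, which holds because $u\in L^q$ and $D^mu\in L^r$ together give $u\in W^{m,1}_{\loc}$ with finite mean. That mean bound is where the quasi-norm case needs care.
\item \emph{Remainder.} The Sobolev–Poincaré inequality gives $\|v\|_{s}\le C\|D^mu\|_r$, with $D^mv=D^mu$.
\item \emph{Polynomial part.} Since polynomials of degree $<m$ form a finite-dimensional space, all quasi-norms on it are equivalent, so $\|Pu\|_p\le C\|Pu\|_q$. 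In turn $\|Pu\|_q\lesssim\|u\|_q+\|v\|_q\lesssim \|u\|_q+\|D^mu\|_r$, using the quasi-triangle inequality and boundedness of $\mathcal{O}$.
\item \emph{Interpolation on $v$.} Extend $v$ with a Stein extension operator and apply the whole-space inequality to the extension. Alternatively, interpolate directly on $\mathcal{O}$ as in the whole-space case, with the Poincaré bound replacing the whole-space one.
\end{itemize}

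\textbf{Closing, and the main obstacle.} I expect the main obstacle to be closing the estimate in the form $c_1\|D^mu\|_r^a\|u\|_q^{1-a}+c_2\|u\|_q$ rather than with a bare term $\|D^mu\|_r$. I would handle it by splitting into two cases.
\begin{itemize}
\item If $\|D^mu\|_r\le\|u\|_q$, every term above is controlled by $\|u\|_q$, which gives the $c_2$ term.
\item Otherwise, $\|D^mu\|_r\ge\|u\|_q$. I then rescale $\mathcal{O}$ locally, or cover it by finitely many star-shaped pieces, so that the Poincaré step produces $\|D^mu\|_r^a\|u\|_q^{1-a}$.
\end{itemize}
The delicate points are the quasi-norm control of $Pu$ and the absorption when $q<1$.
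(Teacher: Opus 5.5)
The paper gives no proof of this lemma; it simply quotes Proposition~A.1 of \cite{GSDP}. Judged on its own, your whole-space part is fine in outline. The bounded-domain part, however, has a genuine gap exactly where you flag the ``main obstacle'', and the remedy you propose does not work.

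Your chain $\|Pu\|_q\lesssim\|u\|_q+\|v\|_q\lesssim\|u\|_q+\|D^mu\|_r$ leaves a bare term $\|D^mu\|_r$ in the bound for $\|u\|_p$. When $\|D^mu\|_r\gg\|u\|_q$, this term is not dominated by $c_1\|D^mu\|_r^a\|u\|_q^{1-a}+c_2\|u\|_q$.
\begin{itemize}
\item Covering $\mathcal O$ by finitely many fixed star-shaped pieces changes nothing. Each piece carries its own polynomial part and reproduces the same bare term.
\item Passing to cubes of a $u$-dependent side $\lambda$ gives the local bounds $\|u\|_{L^p(Q)}\lesssim\lambda^{N/p-N/q}\|u\|_{L^q(Q)}+\lambda^{m-N/r+N/p}\|D^mu\|_{L^r(Q)}$. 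Summing their $p$-th powers over $\sim\lambda^{-N}$ cubes closes only when $p\ge r$, and even then it needs uniform constants on the cut boundary cubes.
\item If $p<r$, which the hypotheses allow (e.g.\ $r=\infty$), H\"older over the cubes costs a factor $\lambda^{-N(1/p-1/r)}$. Optimizing in $\lambda$ then yields at best the exponent $\frac{1/q-1/p}{1/q+m/N-1/p}>a$ on $\|D^mu\|_r$, which is strictly weaker than the claim.
\end{itemize}

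The missing idea is to apply the whole-space inequality to an extension of $u$ itself, not of $v$. Then $\|D^mu\|_r$ only ever enters under the power $a$.
\begin{itemize}
\item \emph{Case $q\ge1$.} Your own decomposition gives $\|u\|_{W^{m,r}(\mathcal O)}\lesssim\|D^mu\|_r+\|u\|_q$. A Stein extension $E$ is bounded on both $L^q$ and $W^{m,r}$, so
\[
\|u\|_{L^p(\mathcal O)}\le\|Eu\|_{L^p(\R^N)}\lesssim\bigl(\|D^mu\|_r+\|u\|_q\bigr)^a\|u\|_q^{1-a}\le\|D^mu\|_r^a\|u\|_q^{1-a}+\|u\|_q .
\]
Alternatively, you can keep your splitting. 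For $q\ge1$ the moment projection is $L^q$-bounded by H\"older, so $\|Pu\|_p+\|v\|_q\lesssim\|u\|_q$ with no derivative term, and extending $v$ then closes directly.
\item \emph{Case $0<q<1$.} Here $P$ is no longer $L^q$-bounded, and the averaging in Stein's extension is problematic as well. Do not decompose. Instead reduce to the $q\ge1$ estimate on $\mathcal O$ by the same log-convexity argument you used on $\R^N$, absorbing the extra $c_2$-terms with Young's inequality. The a priori finiteness needed for the absorption is automatic, since $u\in W^{m,r}(\mathcal O)\hookrightarrow L^p(\mathcal O)$ on a bounded Lipschitz domain.
\end{itemize}

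A similar repair is needed for your whole-space ``density'' step when $q<1$, because mollification is unbounded on $L^q$. Instead, sum the unit-cube estimate $\|u\|_{L^s(Q)}\lesssim\|u\|_{L^q(Q)}+\|D^mu\|_{L^r(Q)}$ over a partition of $\R^N$, with $s\ge\max\{p,q,r,1\}$ and $1/s>1/r-m/N$. This gives $u\in L^s(\R^N)$ directly, hence the finiteness you need for absorption.
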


\section{Evolution of the Local Mass}\label{section-lemma1}

In this section, we prove a Stampacchia-type estimate on the local mass of solutions to \eqref{eq-TFPM}. This estimate will be the main key to proving Theorem \ref{thm-fsop} and subsequently Theorem \ref{thm-main-result-lower}. We shall also prove an analogous estimate for the local $L^2$-norm, which will be important for the second part of Theorem \ref{thm-main-result-lower} concerning bounds on $T_{R_0}$. In order to prove these estimates, we need a technical iteration result, which will enable us to close the estimates. This technique was developed in \cite{Grun_Sauerbrey_Utley} for the study of finite speed of propagation in stochastic porous-medium equations, and is inspired originally by the ideas in \cite{Shishkov/Hulshof,GGDP}. In contrast to that technique, more than one recursive loop must be carried out in its proof, and this result is much more general than that found in \cite[Lemma B.1]{Grun_Sauerbrey_Utley} or \cite{Shishkov/Hulshof}. In what follows, we denote
\[
    \lltriangle_R =\left\{ (s,\delta) \in (0,R)^2 \,:\, \delta\in (0,R-s) \right\}.
\]

\begin{lemma}
    \label{lemma-iteration-trick} Let $R>0$ and consider non-negative mappings $\mathscr V:[0,R]\ra \R_{\geq0}$, along with non-negative functions $\mathscr E$, $\mathscr A$, $\mathscr U$, and $\mathscr F$ defined on $[0,R]\times \R_+$. If the following conditions are fulfilled:
    \begin{enumerate}
        \item The maps $\mathscr E,\mathscr A,\mathscr U,\mathscr F$ are non-increasing in the first variable;
        \item for any $\epsilon >0$, there exists $C_\epsilon>0$ such that for each $(s,\delta)\in \lltriangle_R$, we have
            \begin{equation}\label{eq-iteration-lemma-general}
                \begin{split}
                    \mathscr V(s+\delta) + \mathscr E(s+\delta,\delta) & \lesssim \, \mathscr A\left(s+\frac{\delta}{2},\frac{\delta}{2}\right) + \mathscr U\left(s+\frac{\delta}{2},\frac{\delta}{2}\right), \\
                    \mathscr U(s+\delta,\delta) & \lesssim \, \epsilon \mathscr U\left(s+\frac{\delta}{2},\frac{\delta}{2}\right) + \epsilon \mathscr E\left(s+\frac{\delta}{2},\frac{\delta}{2} \right) + C_\epsilon \mathscr F\left(s+\frac{\delta}{2}, \frac{\delta}{2} \right);
                \end{split}
            \end{equation}
        \item there exist constants $E,A,U,F\in \R$ such that for any $\eta \in (0,1)$, we have for each $(s,\delta)\in [0,R]\times \R_+$ 
            \begin{equation}\label{eq-iteration-lemma-condition}
                \begin{split}
                    \mathscr E(s, \eta\delta) \leq \eta^E \, \mathscr E(s, \delta), \\
                    \mathscr A(s, \eta\delta) \leq \eta^A \, \mathscr A(s, \delta), \\
                    \mathscr U(s, \eta\delta) \leq \eta^U \, \mathscr U(s, \delta), \\
                    \mathscr F(s, \eta\delta) \leq \eta^F \, \mathscr F(s, \delta);
                \end{split}
            \end{equation}
    \end{enumerate}
    then for any $(s,\delta)\in \lltriangle_R$, we have
    \[
        \mathscr V(s+\delta)  \lesssim \,  \mathscr A\left(s,\delta\right) \, + \, \mathscr F\left(s,\delta\right).
    \]
\end{lemma}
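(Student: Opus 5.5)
The plan is to iterate the two inequalities in \eqref{eq-iteration-lemma-general} along a dyadic sequence of pairs that all sit above $(s,\delta)$ in the first variable. Then monotonicity in the first variable and the scaling conditions \eqref{eq-iteration-lemma-condition} bring every term back to the fixed pair $(s,\delta)$. Fix $(s,\delta)\in\lltriangle_R$ and set, for $k\geq 0$,
\[
U_k := \mathscr U\bigl(s+2^{-k}\delta,2^{-k}\delta\bigr),\quad E_k := \mathscr E\bigl(s+2^{-k}\delta,2^{-k}\delta\bigr),
\]
and define $A_k, F_k$ analogously. Applying \eqref{eq-iteration-lemma-general} with the pair $(s,2^{-k}\delta)\in\lltriangle_R$ gives
\[
E_k \lesssim A_{k+1}+U_{k+1}, \qquad U_k\lesssim \epsilon U_{k+1}+\epsilon E_{k+1}+C_\epsilon F_{k+1}.
\]
The first inequality at $k=0$ also gives $\mathscr V(s+\delta)\lesssim A_1+U_1$. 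The coefficient of $U_{k+1}$ in the $E$-inequality is not small, so I will first eliminate $E$. Inserting the bound for $E_{k+1}$ into the $U$-inequality gives the closed recursion
\[
U_k \le C\epsilon\,(U_{k+1}+U_{k+2}) + C\epsilon A_{k+2} + C C_\epsilon F_{k+1},
\]
where $C$ is independent of $\epsilon$ and $k$. Here it matters that the constant in the first line of \eqref{eq-iteration-lemma-general} does not depend on $\epsilon$.

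Next I reduce everything to $(s,\delta)$. Since $s+2^{-k}\delta\ge s$ and the maps are non-increasing in the first variable, condition \eqref{eq-iteration-lemma-condition} with $\eta=2^{-k}$ yields
\[
U_k\le 2^{-kU}\mathscr U(s,\delta),\qquad A_k\le 2^{-kA}\mathscr A(s,\delta),\qquad F_k\le 2^{-kF}\mathscr F(s,\delta).
\]
The exponents may be negative, so these terms can grow geometrically in $k$. To handle this, I choose $L\ge \max\{|U|,|A|,|F|\}$, set $\theta:=2^{-L}$, and define the weighted supremum $\Theta:=\sup_{k\ge1}\theta^kU_k$. Then $\Theta\le\mathscr U(s,\delta)$, which is finite. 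Multiplying the recursion by $\theta^k$ gives
\[
\theta^kU_k\le C\epsilon(\theta^{-1}+\theta^{-2})\,\Theta + C\epsilon\,2^{-2A}\mathscr A(s,\delta)+CC_\epsilon 2^{-F}\mathscr F(s,\delta),
\]
because $\theta^k2^{-(k+2)A}\le 2^{-2A}$ and $\theta^k 2^{-(k+1)F}\le 2^{-F}$ by the choice of $L$. Taking the supremum over $k\ge1$ and fixing $\epsilon$ so small that $C\epsilon(\theta^{-1}+\theta^{-2})\le\frac12$ (this depends only on the exponents and the implicit constants), I absorb the $\Theta$ term and obtain $\Theta\lesssim\mathscr A(s,\delta)+\mathscr F(s,\delta)$.

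Finally, $U_1\le\theta^{-1}\Theta$ and $A_1\le 2^{-A}\mathscr A(s,\delta)$, so $\mathscr V(s+\delta)\lesssim A_1+U_1\lesssim\mathscr A(s,\delta)+\mathscr F(s,\delta)$, which is the claim.

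The main obstacle is the absorption step, which requires $\Theta<\infty$, that is, $\mathscr U(s,\delta)<\infty$. If $\mathscr U(s,\delta)=+\infty$ while $\mathscr A(s,\delta)+\mathscr F(s,\delta)<\infty$, the argument does not close as written. I would therefore assume that the functions are finite-valued, which holds in the applications, where $\mathscr U$ is an integral of a solution with the regularity of Definition \ref{defn-solution}. Alternatively, I would truncate the supremum at $k\le N$ and control the tail term $C\epsilon\,\theta^{N}(U_{N+1}+U_{N+2})$ before letting $N\to\infty$, but this tail also needs finiteness of $\mathscr U$. A secondary point to check is that all pairs $(s,2^{-k}\delta)$ remain in $\lltriangle_R$; this holds since $2^{-k}\delta<\delta<R-s$.
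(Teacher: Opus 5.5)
Your argument is correct, and it takes a different route from the paper's. The paper runs two successive iteration loops. It first iterates the $\mathscr U$-inequality $n$ times, sums the resulting geometric series (with $\epsilon<\min\{2^E,2^U,2^F\}$) and lets $n\to\infty$, which bounds $\mathscr U$ by $\mathscr E$ and $\mathscr F$ alone. It then inserts this into the first inequality and iterates a second time over $\mathscr E$. You eliminate $\mathscr E$ at the outset instead, turning the two inequalities into a closed two-step recursion for $U_k$ along the dyadic chain $(s+2^{-k}\delta,2^{-k}\delta)$. You then close it with a single absorption in the weighted supremum $\Theta=\sup_{k\ge1}\theta^kU_k$, where the weights $\theta^k$ compensate for possibly negative exponents.

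Your route buys three things. You need one loop instead of two. You never use the scaling or monotonicity of $\mathscr E$, so your version is slightly more general. And every reduction compares a value at first argument $s+2^{-k}\delta$ with the value at the smaller argument $s$, which is the direction in which non-increasing monotonicity actually helps. The paper's first loop, by contrast, bounds $\mathscr E(s+\delta/2^{i+1},\delta/2^{i+1})$ by $2^{-iE}\mathscr E(s+\delta/2,\delta/2)$, and similarly for $\mathscr F$. Since $s+\delta/2^{i+1}\le s+\delta/2$, that step would need the opposite monotonicity, and your organization avoids the issue. The paper's explicit iteration, for its part, gives the constants as explicit geometric sums.

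Your finiteness caveat is not a real gap. The functions are real-valued by hypothesis, and the paper relies on the same finiteness when it discards the remainder $\epsilon^n2^{-nU}\mathscr U(s,\delta)$ in the limit.
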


\begin{proof}
    It suffices to prove the result when the multiplicative constants in the inequalities \eqref{eq-iteration-lemma-general} are equal to one. Indeed, by first rescaling $\mathscr A$ and $\mathscr U$ in the first inequality and then rescaling $\mathscr F$ in the second inequality, the multiplicative constants may be taken to be one, and the desired conclusion still holds.

    Consider the second inequality in \eqref{eq-iteration-lemma-general}. Note that the first term appearing on the right is just the same as the term on the left, but with $\delta$ replaced with $\delta/2$. Therefore, we can apply the same estimate again to see that
    \begin{align*}
            \mathscr U(s+\delta,\delta) &\leq \epsilon^2\, \mathscr U\left(s+\frac{\delta}{4},\frac{\delta}{4}\right) + \epsilon\left(\mathscr E\left(s+\frac{\delta}{2},\frac{\delta}{2} \right) + \epsilon \mathscr E\left(s+\frac{\delta}{4},\frac{\delta}{4} \right)\right)   \\
            & \qquad + C_\epsilon \left( \mathscr F\left(s+\frac{\delta}{2}, \frac{\delta}{2} \right) + \epsilon  \mathscr F\left(s+\frac{\delta}{4}, \frac{\delta}{4} \right)\right).
    \end{align*}
    Continuing this process inductively, the following holds for any $n\in \N$:
    \begin{align*}
            \mathscr U\left(s+\delta,\delta\right) &\leq \epsilon^n\, \mathscr U\left(s + \frac{\delta}{2^n},\frac{\delta}{2^n}\right) + \epsilon\left(\sum_{i=0}^{n-1} \epsilon^i \mathscr E\left(s+\frac{\delta}{2^{i+1}},\frac{\delta}{2^{i+1}} \right)\right)   \\
            & \qquad + C_\epsilon \left( \sum_{i=0}^{n-1} \epsilon^i  \mathscr F\left(s+\frac{\delta}{2^{i+1}}, \frac{\delta}{2^{i+1}} \right)\right).
    \end{align*}
    Now using the fact that $\mathscr U,\mathscr E,\mathscr F$ are non-increasing in the first variable along with \eqref{eq-iteration-lemma-condition}, we have 
    \begin{align*}
            \mathscr U\left(s+\delta ,\delta\right) &\leq \epsilon^n 2^{-nU}\, \mathscr U\left(s,\delta\right) + \epsilon\left(\sum_{i=0}^{n-1} \epsilon^i 2^{-iE}\right) \mathscr E\left(s+\frac{\delta}{2},\frac{\delta}{2} \right)  \\
            & \qquad + C_\epsilon \left( \sum_{i=0}^{n-1} \epsilon^i2^{-iF}  \right)\mathscr F\left(s+\frac{\delta}{2}, \frac{\delta}{2} \right).
    \end{align*}
    If we choose $\epsilon$ such that 
    \[
        \epsilon< \min\{ 2^F, 2^E, 2^U \},
    \]
    then we may pass to the limit $n\ra \infty$ in the above inequality to see that 
    \begin{align*}
            \mathscr U\left(s+\delta,\delta\right) &\leq  \, \epsilon \, S_1\, \mathscr E\left(s+\frac{\delta}{2},\frac{\delta}{2} \right) +  C_\epsilon \, S_2\,   \mathscr F\left(s+\frac{\delta}{2}, \frac{\delta}{2} \right).
    \end{align*}
    Here, $S_1,S_2$ are the values of the geometric series obtained from the passage to the limit. Note that if we choose $\epsilon$ to be even smaller, this only decreases the value of $S_1$ and $S_2$. For that reason, we treat them as if they were independent of $\epsilon$. We now use this estimate in the first inequality of \eqref{eq-iteration-lemma-general} to see that
    \begin{align*}
         \mathscr V(s+\delta)+ \mathscr E(s+\delta,\delta) \leq \mathscr A\left(s+\frac{\delta}{2},\frac{\delta}{2}\right) + \epsilon \, S_1\, \mathscr E\left(s+\frac{\delta}{4},\frac{\delta}{4} \right) +  C_\epsilon \, S_2\,   \mathscr F\left(s+\frac{\delta}{4}, \frac{\delta}{4} \right).
    \end{align*}
    Now we go again through the loop, this time iterating over $\mathscr E$. Using an identical argument as before, the following holds for each $n\in \N$:
    \begin{align*}
        \mathscr V(s+\delta)+\mathscr E(s+\delta,\delta) & \leq \left( \sum_{i=0}^{n-1} \left(\epsilon S_1\right)^i\mathscr A\left(s+\frac{\delta}{2\cdot 4^i},\frac{\delta}{2\cdot 4^i}\right) \right) + \left(\epsilon \, S_1\right)^n\, \mathscr E\left(s+\frac{\delta}{4^n},\frac{\delta}{4^n} \right) \\
        & \qquad +  C_\epsilon \, S_2\,\left( \sum_{i=0}^{n-1} \left(\epsilon S_1 \right)^i \, \mathscr F\left(s+\frac{\delta}{4^{i+1}}, \frac{\delta}{4^{i+1}} \right) \right).
    \end{align*}
    Using again the fact that $\mathscr E,\mathscr A$, and $\mathscr F$ are non-increasing in tandem with \eqref{eq-iteration-lemma-condition}, we obtain
    \begin{align*}
        \mathscr V(s+\delta)+ \mathscr E(s+\delta,\delta) & \leq \left( 2^{-A} \sum_{i=0}^{n-1} \left(\epsilon S_1\right)^i\, 4^{-iA}  \right)\mathscr A\left(s,\delta\right) + \left(\epsilon \, S_1\right)^n\, 4^{-nE} \, \mathscr E\left(s,\delta\right) \\
        & \qquad +  C_\epsilon \, S_2\,\left( 4^{-F}\sum_{i=0}^{n-1} \left(\epsilon S_1 \right)^i 4^{-iF}  \right)\mathscr F\left(s, \delta \right).
    \end{align*}
    Choosing once more $\epsilon$ small enough and passing to the limit $n\ra \infty$, we have 
    \begin{align*}
        \mathscr V(s+\delta)+ \mathscr E(s+\delta,\delta) & \leq S_3\, \mathscr A\left(s,\delta\right) +  C_\epsilon \, S_2\,S_4\,\mathscr F\left(s, \delta \right).
    \end{align*}
    
\end{proof}

With this result in mind, we proceed to the integral estimates. The first controls the local mass:
\begin{lemma}\label{lemma-local-L1}
    Let $u$ be a strongly energy dissipating solution satisfying \eqref{ass-u0} and \eqref{ass-nm}. There exists a constant $C(n,m,d)>0$ such that for any $x\in \R^d$, the following holds for any $0\leq t_0<t_1$ and any $s,\delta$ such that $0< s<s+\delta$:
    \begin{equation}
        \label{eq-local-L1}
        \begin{split}
        \mathcal L^d(B_s)\inv \, \left(\sup_{t_0\leq t\leq t_1} \, \int_{B_{s}} u(t)\right)^2 \,  & \lesssim_{(n,m,d)} \, \int_{B_{s+\delta}} u(t_0)^2 \,  + \, \delta^2 \int_{B_{s+\delta}}\abs{\nabla u(t_0)}^2 \\
        & \hspace{1cm} + \, (t_1-t_0) \, \delta^{-(dm+2)} \left(\sup_{t_0\leq t\leq t_1} \, \int_{B_{s+\delta}} u(t) \right)^{m+1} \\
        & \hspace{1cm }+ \, (t_1-t_0) \, \delta^{-(d(n+1)+4))}\left( \sup_{t_0\leq t\leq t_1} \, \int_{B_{s+\delta}} u(t) \right)^{n+2}.
        \end{split}
    \end{equation}
\end{lemma}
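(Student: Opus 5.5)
Our plan is to obtain \eqref{eq-local-L1} from the local energy estimate \eqref{eq-energyest1}, combined with Gagliardo--Nirenberg interpolation (Lemma \ref{lemma-GN}) and the iteration result Lemma \ref{lemma-iteration-trick}.

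\textbf{Step 1: reduce the left-hand side to a local $H^1$-type quantity.} By Cauchy--Schwarz, $\mathcal L^d(B_s)^{-1}(\int_{B_s}u)^2\le \int_{B_s}u^2$. It therefore suffices to bound $\sup_t\int_{B_s}u^2$. We control $\int_{B_s}u(t)^2$ through the time derivative of a localized $L^2$ functional together with the gradient term.

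Concretely, take a cutoff $\zeta$ that equals $1$ on $B_{s}$, is supported in $B_{s+\delta}$, and satisfies $|\nabla\zeta|\lesssim\delta^{-1}$ and $|D^2\zeta|\lesssim\delta^{-2}$. Multiply the gradient part of \eqref{eq-energyest1} by $\delta^2$. On the left this gives $\delta^2\sup_t\int\zeta^6|\nabla u|^2$. A Poincaré/interpolation step on $B_{s+\delta}$ then yields
\[
\int_{B_s}u^2\lesssim \delta^2\int_{B_{s+\delta}}|\nabla u|^2+\delta^{-d}\Big(\int_{B_{s+\delta}}u\Big)^2 .
\]
The $L^1$ term here is problematic because it sits at a larger radius. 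This is exactly why we need the iteration lemma: in its notation we set $\mathscr V(s)=\sup_t\int_{B_s}u^2$ together with the initial terms.

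\textbf{Step 2: estimate the right-hand side of \eqref{eq-energyest1}.} The right-hand side contains $\delta^{-6}\int_{t_0}^{t_1}\int_{B_{s+\delta}}u^{n+2}$ and $\delta^{-4}\int\int u^{m+1}$. We interpolate each by Lemma \ref{lemma-GN} on $B_{s+\delta}$, between the dissipation quantities and the local mass.

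For the thin-film term, write $u^{n+2}=(u^{\frac{n+2}{6}})^6$ and interpolate using $\int\zeta^6|\nabla u^{\frac{n+2}{6}}|^6$ and $\int\zeta^6|\nabla\Delta u^{\frac{n+2}{2}}|^2$ against $\|u\|_{L^1}$. For the porous-medium term, use $\int|D^2u^{\frac{m+1}{2}}|^2$ and $\int|\nabla u^{\frac{m+1}{4}}|^4$ against $\|u\|_{L^1}$. Young's inequality then produces:
\begin{itemize}
\item $\epsilon$ times the dissipation terms, evaluated on the larger ball;
\item pure-mass terms $(t_1-t_0)\delta^{-(d(n+1)+4)}(\sup\int u)^{n+2}$ and $(t_1-t_0)\delta^{-(dm+2)}(\sup\int u)^{m+1}$.
\end{itemize}
The exponents of $\delta$ in these terms are dictated by scaling: $\int u$ has dimension $L^d$ times height, and the equation fixes the time scale. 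The lower-order corrections $c_2\|u\|_q$ from Lemma \ref{lemma-GN} on bounded domains scale in the same way, so they fold into the same terms.

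\textbf{Step 3: close with Lemma \ref{lemma-iteration-trick}.} We assign the quantities in the lemma as follows:
\begin{itemize}
\item $\mathscr E(s,\delta)$ is the dissipation on $B_s$, weighted by $\delta^2$;
\item $\mathscr U$ is the collection of cutoff terms $\delta^{-4}\int\int u^{n+2}$ and $\delta^{-2}\int\int u^{m+1}$ (after the $\delta^2$ normalization);
\item $\mathscr A$ is the initial data terms $\int u(t_0)^2+\delta^2\int|\nabla u(t_0)|^2$ on $B_{s+\delta}$;
\item $\mathscr F$ is the two pure-mass terms.
\end{itemize}
All four are monotone in $s$ in the required direction and are power-homogeneous in $\delta$, which verifies condition \eqref{eq-iteration-lemma-condition}. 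The first inequality in \eqref{eq-iteration-lemma-general} is \eqref{eq-energyest1} applied with a cutoff between radii $s+\delta/2$ and $s+\delta$, reindexed accordingly. The second inequality in \eqref{eq-iteration-lemma-general} is the Gagliardo--Nirenberg and Young step from Step 2. The lemma then gives the claimed bound.

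\textbf{Main obstacle.} We expect the hardest part to be Step 2, in two respects. First, the Gagliardo--Nirenberg exponents must be admissible for $d\le3$ and for $(n,m)$ satisfying \eqref{ass-nm}; in particular, interpolating down to $L^1$ requires the condition $a<1$ so that Young's inequality applies. Second, the $\delta$-powers must come out exactly homogeneous so that \eqref{eq-iteration-lemma-condition} holds. If the direct interpolation to $L^1$ fails for some exponents, we will first try interpolating through $\int u^2$, i.e. through $\mathscr V$ itself, and absorb that contribution as well.
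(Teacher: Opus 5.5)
There is a genuine gap in Step 1, and it carries into Step 3. The energy estimate \eqref{eq-energyest1} controls $\sup_t\int\zeta^6\abs{\nabla u}^2$, not $\sup_t\int_{B_s}u^2$. The Poincar\'e/Nash step you use to pass from one to the other produces the term $\delta^{-d}\bigl(\sup_t\int_{B_{s+\delta}}u(t)\bigr)^2$. This term has no factor $(t_1-t_0)$ and is merely linear in $M=(\sup_t\int u)^2$.

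Lemma \ref{lemma-iteration-trick} cannot remove it. There is no $\mathscr V$-type quantity on the right of \eqref{eq-iteration-lemma-general} to absorb it into, so it must go into $\mathscr A$ or $\mathscr F$ and survives in the conclusion. With it, the inequality is essentially vacuous: for $\delta\le s$ one has $\mathcal L^d(B_s)^{-1}(\int_{B_s}u)^2\lesssim\delta^{-d}(\int_{B_{s+\delta}}u)^2$ trivially. It is also useless for Theorem \ref{thm-fsop}. There it would contribute $R_0^d\delta^{-d}M(T,s)$, with exponent $\beta=1$ and no smallness as $T\downarrow T_{R_0}$, so Lemma \ref{lemma-stampachhia-wait} cannot be applied.

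A symptom is that your route never generates the term $\int_{B_{s+\delta}}u(t_0)^2$ appearing in the statement; you insert it into $\mathscr A$ by hand. Correspondingly, your claim in Step 3 that the first inequality of \eqref{eq-iteration-lemma-general} ``is'' \eqref{eq-energyest1} is false when $\mathscr V(s)=\sup_t\int_{B_s}u^2$.

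The missing ingredient is the one you mention in passing but never carry out: a localized $L^2$ estimate obtained by testing \eqref{eq-weakform} with $\zeta^6u$. This gives
\[
\frac12\int\zeta^6u^2\Big|_{t_0}^{t_1}=-\int_{t_0}^{t_1}\!\!\int\zeta^6u^{m-1}\abs{\nabla u}^2+\frac{1}{m+1}\int_{t_0}^{t_1}\!\!\int\Delta(\zeta^6)\,u^{m+1}+\int_{t_0}^{t_1}\!\!\int\nabla(\zeta^6u)\cdot u^n\nabla\Delta u .
\]
The porous-medium part yields a term of good sign plus $\delta^{-2}\int\!\!\int u^{m+1}$. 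The thin-film flux is bounded by H\"older in terms of $\int\!\!\int\zeta^6\abs{\nabla u^{\frac{n+2}{6}}}^6$, $\int\!\!\int\zeta^6\abs{\nabla\Delta u^{\frac{n+2}{2}}}^2$ and $\int\!\!\int u^{n+2}$.

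This is where \eqref{eq-energyest1} actually enters. Applied on an intermediate ball, it bounds both the flux and the $\delta^2$-weighted dissipation by $\delta^2\int\abs{\nabla u(t_0)}^2+\delta^{-4}\int\!\!\int u^{n+2}+\delta^{-2}\int\!\!\int u^{m+1}$. Adding the two estimates gives $\mathscr V+\mathscr E\lesssim\mathscr A+\mathscr U$, with every non-initial term a space-time integral. It is exactly this time integration that becomes the factor $(t_1-t_0)$ after your Gagliardo--Nirenberg/Young step.

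With Step 1 replaced in this way, the rest of your plan is the paper's argument: interpolating $\delta^{-2}\int\!\!\int u^{m+1}$ and $\delta^{-4}\int\!\!\int u^{n+2}$ against the local mass, applying Lemma \ref{lemma-iteration-trick}, and using Cauchy--Schwarz at the end.
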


\begin{proof}
    The proof uses a combination of the weighted energy estimate \eqref{eq-energyest1} with a weighted $L^2$-estimate, followed by an application of the two-step iteration of Lemma \ref{lemma-iteration-trick}.

    Fix now $0\leq t_0<t_1$ and $x\in \R^d$. From here on, all balls $B_r$ are assumed to be centered at $x$. For an arbitrary choice of $s,\delta>0$ such that $0< s<s+\delta$, we consider localizing functions $\varphi_{s,\delta}\in C^2_c(\R^d)$ with the following properties: First, $\varphi_{s,\delta}$ takes values in $[0,1]$ such that $\varphi_{s,\delta} = 1$ identically in $B_s$ and $\varphi_{s,\delta}= 0$ identically in $\R^d\setminus B_{s+\delta}$. Second, there exists a constant $c>0$ that does not depend on $s$ or $\delta$ such that the following holds:
    \[
        \abs{\nabla\varphi_{s,\delta}} \leq \frac{c}{\delta}, \quad \abs{D^2\varphi_{s,\delta}} \leq \frac{c}{\delta^2}.
    \]
    If, for an arbitrary but fixed choice of $s,\delta$, we use $\zeta = \varphi_{s,\frac{\delta}{2}}$ in the energy estimate \eqref{eq-energyest1}, then we obtain 
    \begin{equation}
    \begin{split}
        \label{eq-lemma-2.1-loc-en-1}
        \int_{\R^d} \varphi_{s,\frac{\delta}{2}}^6\abs{\nabla u(t)}^2 \, \bigg\vert_{t_0}^{t_1} & \, +  \, \int_{t_0}^{t_1}\int_{\R^d} \varphi_{s,\frac{\delta}{2}}^6 \left[ \abs{\nabla u^{\frac{n+2}{6}}}^6 + \abs{\nabla\Delta u^{\frac{n+2}{2}}}^2 + \abs{\nabla u^{\frac{m+1}{4}}}^4
         \right] \\
         & \hspace{0.5cm }\lesssim_{(n,m)} \,  \int_{B_{s+\frac{\delta}{2}}}\abs{\nabla u_0}^2 \, + \,  \left(\frac{2}{\delta}\right)^{6}\int_{t_0}^{t_1}\int_{B_{s+\frac{\delta}{2}}} u^{n+2} \, + \, \left(\frac{2}{\delta}\right)^{4}\int_{t_0}^{t_1}\int_{B_{s+\frac{\delta}{2}}} u^{m+1}.
         \end{split}
    \end{equation}
    Hence we have 
    \begin{equation}
        \begin{split}
        \label{eq-lemma-2.1-loc-en-2}
        &\int_{t_0}^{t_1}\int_{\R^d} \varphi_{s,\frac{\delta}{2}}^6 \left[ \abs{\nabla u^{\frac{n+2}{6}}}^6 + \abs{\nabla\Delta u^{\frac{n+2}{2}}}^2 \, + \abs{\nabla u^{\frac{m+1}{4}}}^4 
         \right] \\
         & \hspace{2cm}\lesssim_{(n,m)} \, \left(\frac{2}{\delta}\right)^{6}\left[ \left( \frac{\delta}{2}\right)^6\int_{B_{s+\frac{\delta}{2}}} \abs{\nabla u(t_0)}^2 \, + \, \int_{t_0}^{t_1}\int_{B_{s+\frac{\delta}{2}}} u^{n+2} \, + \, \left(\frac{\delta}{2}\right)^{2}\int_{t_0}^{t_1}\int_{B_{s+\frac{\delta}{2}}} u^{m+1}\right].
         \end{split}
    \end{equation}
    We shall use the previous estimate to close a localized $L^2$-estimate. To this end, we note first that $\varphi_{s,\frac{\delta}{2}}^6u$ is a valid test function for \eqref{eq-weakform}, hence (after a straightforward density argument) we have
    \begin{equation}
        \label{eq-lemma-2.1-loc-l2-1}
        \begin{split}
         \int_{\R^d}& \varphi_{s,\frac{\delta}{2}}^6\, u(t)^2 \bigg\vert_{t_0}^{t_1} \,= \, - \int_{t_0}^{t_1}\int_{\R^d} \varphi_{s,\frac{\delta}{2}}^6 \, u^{m-1}\abs{\nabla u}^2 + \frac{1}{m+1}\int_{t_0}^{t_1}\int_{\R^d} \Delta\left( \varphi_{s,\frac{\delta}{2}}^6\right) \, u^{m+1} \\
        &\hspace{1cm} + \, \int_{t_0}^{t_1}\int_{\R^d} \nabla\left( \varphi_{s,\frac{\delta}{2}}^6 u \right)\cdot u^n\nabla\Delta u \\
        &\lesssim_{(n,m)} - \int_{t_0}^{t_1}\int_{\R^d} \varphi_{s,\frac{\delta}{2}}^6 \, \abs{\nabla u^{\frac{m+1}{2}}}^2 + \int_{t_0}^{t_1}\int_{\R^d} \Delta\left( \varphi_{s,\frac{\delta}{2}}^6\right) \, u^{m+1} \\
        &\hspace{1cm} + \, \left(\int_{t_0}^{t_1}\int_{\R^d} \varphi_{s,\frac{\delta}{2}}^6  \abs{\nabla u^{\frac{n+2}{6}}}^6\right)^{\frac{1}{6}}\left( \int_{t_0}^{t_1}\int_{\R^d}\varphi_{s,\frac{\delta}{2}}^6 \abs{\nabla\Delta u^{\frac{n+2}{2}}}^2\right)^{\frac
        12}\left( \int_{t_0}^{t_1}\int_{\R^d}  \varphi_{s,\frac{\delta}{2}}^6 \, u^{n+2}\right)^{\frac{1}{3}} \\
        &\hspace{1cm} + \, \left( \int_{t_0}^{t_1}\int_{\R^d} \varphi_{s,\frac{\delta}{2}}^6 \abs{\nabla\Delta u^{\frac{n+2}{2}}}^2\right)^{\frac
        12}\left( \int_{t_0}^{t_1}\int_{\R^d}  \varphi_{s,\frac{\delta}{2}}^4\abs{\nabla\varphi_{s,\frac{\delta}{2}}}^2 \, u^{n+2}\right)^{\frac{1}{2}} \\
        & \lesssim \,  - \int_{t_0}^{t_1}\int_{\R^d} \varphi_{s,\frac{\delta}{2}}^6 \, \abs{\nabla u^{\frac{m+1}{2}}}^2 + \left( \frac{2}{\delta}\right)^2 \int_{t_0}^{t_1}\int_{B_{s+\frac{\delta}{2}}} u^{m+1} \\
        &\hspace{1cm} + \, \left(\int_{t_0}^{t_1}\int_{\R^d} \varphi_{s,\frac{\delta}{2}}^6  \abs{\nabla u^{\frac{n+2}{6}}}^6\right)^{\frac{1}{6}}\left( \int_{t_0}^{t_1}\int_{\R^d}\varphi_{s,\frac{\delta}{2}}^6 \abs{\nabla\Delta u^{\frac{n+2}{2}}}^2\right)^{\frac
        12}\left( \int_{t_0}^{t_1}\int_{B_{s+\frac{\delta}{2}}} u^{n+2}\right)^{\frac{1}{3}} \\
        &\hspace{1cm} + \, \left(\frac{2}{\delta} \right)\left( \int_{t_0}^{t_1}\int_{\R^d} \varphi_{s,\frac{\delta}{2}}^6 \abs{\nabla\Delta u^{\frac{n+2}{2}}}^2\right)^{\frac
        12}\left( \int_{t_0}^{t_1}\int_{B_{s+\frac{\delta}{2}}}  u^{n+2}\right)^{\frac{1}{2}}.
        \end{split}
    \end{equation}
    On the right hand side of \eqref{eq-lemma-2.1-loc-l2-1}, we see terms which also appear on the left hand side of \eqref{eq-lemma-2.1-loc-en-2}. Furthermore, the remaining integrals of $u^{n+2}$ may be estimated from above by the entire bracketed term in \eqref{eq-lemma-2.1-loc-en-2}. Putting these observations together yields 
    \begin{equation}
        \label{eq-lemma-2.1-loc-l2-2}
        \begin{split}
        \int_{\R^d} \varphi_{s,\frac{\delta}{2}}^6\, u(t)^2 \bigg\vert_{t_0}^{t_1} \,
        & \lesssim_{(n,m)} \,  - \int_{t_0}^{t_1}\int_{\R^d} \varphi_{s,\frac{\delta}{2}}^6 \, \abs{\nabla u^{\frac{m+1}{2}}}^2 + \left( \frac{2}{\delta}\right)^2 \int_{t_0}^{t_1}\int_{B_{s+\frac{\delta}{2}}} u^{m+1} \\
        &\hspace{1cm} + \, \left( \frac{2}{\delta}\right)^4 \left[ \left( \frac{\delta}{2}\right)^6\int_{B_{s+\frac{\delta}{2}}} \abs{\nabla u(t_0)}^2 \, + \, \int_{t_0}^{t_1}\int_{B_{s+\frac{\delta}{2}}} u^{n+2} \, + \, \left(\frac{\delta}{2}\right)^{2}\int_{t_0}^{t_1}\int_{B_{s+\frac{\delta}{2}}} u^{m+1}\right] \\
        & \lesssim_{(n,m)} \,  - \int_{t_0}^{t_1}\int_{\R^d} \varphi_{s,\frac{\delta}{2}}^6 \, \abs{\nabla u^{\frac{m+1}{2}}}^2 + \left( \frac{2}{\delta}\right)^2 \int_{t_0}^{t_1}\int_{B_{s+\frac{\delta}{2}}} u^{m+1} \\
        &\hspace{2cm} + \, \left( \frac{\delta}{2}\right)^2\int_{B_{s+\frac{\delta}{2}}} \abs{\nabla u(t_0)}^2 \, + \,\left( \frac{2}{\delta}\right)^4 \int_{t_0}^{t_1}\int_{B_{s+\frac{\delta}{2}}} u^{n+2}.
        \end{split}
    \end{equation}
    As a consequence of \eqref{eq-lemma-2.1-loc-en-2}, we see that 
    \begin{equation}
        \label{eq-lemma-2.1-loc-en-3}
        \begin{split}
        &\left( \frac{\delta}{2} \right)^2\int_{t_0}^{t_1}\int_{\R^d} \varphi_{s,\frac{\delta}{2}}^6 \left[ \abs{\nabla u^{\frac{n+2}{6}}}^6 + \abs{\nabla\Delta u^{\frac{n+2}{2}}}^2 \, + \abs{\nabla u^{\frac{m+1}{4}}}^4 
         \right] \\
         & \hspace{2cm}\lesssim_{(n,m)} \, \left(\frac{2}{\delta}\right)^{4}\left[ \left( \frac{\delta}{2}\right)^6\int_{B_{s+\frac{\delta}{2}}} \abs{\nabla u(t_0)}^2 \, + \, \int_{t_0}^{t_1}\int_{B_{s+\frac{\delta}{2}}} u^{n+2} \, + \, \left(\frac{\delta}{2}\right)^{2}\int_{t_0}^{t_1}\int_{B_{s+\frac{\delta}{2}}} u^{m+1}\right].
         \end{split}
    \end{equation}
    Adding together the inequalities \eqref{eq-lemma-2.1-loc-l2-2} and \eqref{eq-lemma-2.1-loc-en-3}, we obtain the following:
    \begin{equation}
        \label{eq-lemma-2.1-iteration-step1}
        \begin{split}
        \int_{\R^d} \varphi_{s,\frac{\delta}{2}}^6\, u(t_1)^2 \, & + \, \left(\frac{\delta}{2} \right)^2\int_{t_0}^{t_1}\int_{\R^d} \varphi_{s,\frac{\delta}{2}}^6 \left[ \abs{\nabla u^{\frac{n+2}{6}}}^6 + \abs{\nabla u^{\frac{m+1}{4}}}^4 
         \right] \\
         & \lesssim_{(n,m)} \, \int_{B_{s+\frac{\delta}{2}}} u(t_0)^2 \, + \, \left( \frac{\delta}{2}\right)^2\int_{B_{s+\frac{\delta}{2}}} \abs{\nabla u(t_0)}^2 \,\\ 
         &\hspace{1cm} + \, \left( \frac{2}{\delta}\right)^2 \int_{t_0}^{t_1}\int_{B_{s+\frac{\delta}{2}}} u^{m+1} 
         \, + \,\left( \frac{2}{\delta}\right)^4 \int_{t_0}^{t_1}\int_{B_{s+\frac{\delta}{2}}} u^{n+2}.
         \end{split}
    \end{equation}
    Since $t_1>t_0$ are arbitrary, and recalling that $\varphi_{s,\frac{\delta}{2}}=1$ identically in $B_s$, we may just as well write
    \begin{equation}
        \label{eq-lemma-2.1-iteration-step2}
        \begin{split}
        \sup_{t_0\leq t\leq t_1}\int_{B_s} u(t)^2 \, & + \, \left(\frac{\delta}{2} \right)^2\int_{t_0}^{t_1}\int_{B_{s}} \left[ \abs{\nabla u^{\frac{n+2}{6}}}^6 + \abs{\nabla u^{\frac{m+1}{4}}}^4 
         \right] \\
         & \lesssim_{(n,m)} \, \int_{B_{s+\frac{\delta}{2}}} u(t_0)^2 \, + \, \left( \frac{\delta}{2}\right)^2\int_{B_{s+\frac{\delta}{2}}} \abs{\nabla u(t_0)}^2 \,\\ 
         &\hspace{1cm} + \, \left( \frac{2}{\delta}\right)^2 \int_{t_0}^{t_1}\int_{B_{s+\frac{\delta}{2}}} u^{m+1} 
         \, + \,\left( \frac{2}{\delta}\right)^4 \int_{t_0}^{t_1}\int_{B_{s+\frac{\delta}{2}}} u^{n+2}.
         \end{split}
    \end{equation}
    This inequality will eventually yield the first inequality of \eqref{eq-iteration-lemma-general}. The last two integrals on the right are estimated using the Gagliardo--Nirenberg interpolation inequality (Lemma \ref{lemma-GN}). For the first integral, note that for all $t\in (t_0,t_1)$, one has:
    \begin{equation}
        \label{eq-lemma-2.1-GN-m-term}
        \begin{split}
        \delta^{-2}\int_{B_{s}} u(t)^{m+1} \, & \leq \, \delta^{-2} \int_{\R^d}\varphi_{s,\frac{\delta}{2}}^4 \, u(t)^{m+1} \\
        & \lesssim_{(m,d)} \, \delta^{-2}\left( \int_{\R^d}\abs{\nabla\left(\varphi_{s,\frac{\delta}{2}} \, u(t)^{\frac{m+1}{4}}\right)}^4\right)^{\vartheta}\left( \int_{\R^d} \varphi_{s,\frac{\delta}{2}}^{\frac{4}{m+1}} \, u(t) \right)^{(1-\vartheta)(m+1)}\\
        & \lesssim_{(m,d)} \, \epsilon\left(\frac{\delta}{2}\right)^2\int_{\R^d}\abs{\nabla\left(\varphi_{s,\frac{\delta}{2}} \, u(t)^{\frac{m+1}{4}}\right)}^4 + C_\epsilon \left( \frac{2}{\delta} \right)^{\frac{2+2\vartheta}{1-\vartheta}} \left( \int_{B_{s+\frac{3\delta}{4}}} u(t) \right)^{m+1}\\
        & \lesssim_{(m,d)} \, \epsilon\left(\frac{\delta}{2}\right)^2\int_{B_{s+\frac{\delta}{2}}}\abs{\nabla u(t)^{\frac{m+1}{4}}}^4 \, +\, \epsilon \left( \frac{2}{\delta} \right)^2\int_{B_{s+\frac{\delta}{2}}} u(t)^{m+1} \, \\
        & \hspace{2cm}+ \, C_\epsilon \left( \frac{2}{\delta} \right)^{\frac{2+2\vartheta}{1-\vartheta}} \left( \int_{B_{s+\frac{\delta}{2}}} u(t) \right)^{m+1}.
        \end{split}
    \end{equation}
    Here we have 
    \[
        \vartheta = \frac{dm}{4+dm},
    \]
    which allows us to compute
    \[
        \frac{2+2\vartheta}{1-\vartheta} = dm+2.
    \]
    Integrating \eqref{eq-lemma-2.1-GN-m-term} from $t_0$ to $t_1$ gives then 
    \begin{equation}
        \label{eq-lemma-2.1-iteration-setup-3}
                \begin{split}
        \delta^{-2}\int_{t_0}^{t_1}\int_{B_{s}} u^{m+1} \,
        & \lesssim_{(m,d)} \, \epsilon\left(\frac{\delta}{2}\right)^2\int_{t_0}^{t_1}\int_{B_{s+\frac{\delta}{2}}}\abs{\nabla u^{\frac{m+1}{4}}}^4 \, +\, \epsilon \left( \frac{2}{\delta} \right)^2\int_{t_0}^{t_1}\int_{B_{s+\frac{\delta}{2}}} u^{m+1} \, \\
        & \hspace{2cm}+ \, C_\epsilon \, (t_1-t_0) \,\left( \frac{2}{\delta} \right)^{dm+2}\, \left( \sup_{t_0\leq t\leq t_1}\int_{B_{s+\frac{\delta}{2}}} u(t) \right)^{m+1}.
        \end{split}
    \end{equation}
    We perform an analogous computation for the second integral on the right hand side of \eqref{eq-lemma-2.1-iteration-step2}. For all $t\in (t_0,t_1)$:
     \begin{equation}
        \label{eq-lemma-2.1-GN-n-term}
        \begin{split}
        \delta^{-4}\int_{B_{s+\frac{\delta}{2}}} u(t)^{n+2} \, & \leq \, \delta^{-4} \int_{\R^d}\varphi_{s,\frac{\delta}{2}}^6 \, u(t)^{n+2} \\
        & \lesssim_{(n,d)} \, \delta^{-4}\left( \int_{\R^d}\abs{\nabla\left(\varphi_{s,\frac{\delta}{2}} \, u(t)^{\frac{n+2}{6}}\right)}^6\right)^{\theta}\left( \int_{\R^d} \varphi_{s,\frac{\delta}{2}}^{\frac{6}{n+2}} \, u(t) \right)^{(1-\theta)(n+2)}\\
        & \lesssim_{(n,d)} \, \epsilon\left(\frac{\delta}{2}\right)^4\int_{\R^d}\abs{\nabla\left(\varphi_{s,\frac{\delta}{2}} \, u(t)^{\frac{n+2}{6}}\right)}^6 + C_\epsilon \left( \frac{2}{\delta} \right)^{\frac{4+2\theta}{1-\theta}} \left( \int_{B_{s+\frac{\delta}{2}}} u(t) \right)^{n+2}\\
        & \lesssim_{(m,d)} \, \epsilon\left(\frac{\delta}{2}\right)^4\int_{B_{s+\frac{3\delta}{4}}}\abs{\nabla u(t)^{\frac{n+2}{6}}}^6 \, +\, \epsilon \left( \frac{2}{\delta} \right)^2\int_{B_{s+\frac{\delta}{2}}} u(t)^{n+2} \, \\
        & \hspace{2cm}+ \, C_\epsilon \left( \frac{2}{\delta} \right)^{\frac{4+2\theta}{1-\theta}} \left( \int_{B_{s+\frac{\delta}{2}}} u(t) \right)^{n+2}.
        \end{split}
    \end{equation}
    In this case
    \[
        \theta = \frac{d(n+1)}{6+d(n+1)},
    \]
    from which we derive
    \[
        \frac{4+2\theta}{1-\theta} = 4+d(n+1).
    \]
    Integrating \eqref{eq-lemma-2.1-GN-n-term} from $t_0$ to $t_1$ yields 
    \begin{equation}
        \label{eq-lemma-2.1-iteration-setup-4}
                \begin{split}
        \delta^{-4}\int_{t_0}^{t_1}\int_{B_{s}} u^{n+2} \,
        & \lesssim_{(n,d)} \, \epsilon\left(\frac{\delta}{2}\right)^2\int_{t_0}^{t_1}\int_{B_{s+\frac{\delta}{2}}}\abs{\nabla u^{\frac{n+2}{6}}}^6 \, +\, \epsilon \left( \frac{4}{\delta} \right)^4\int_{t_0}^{t_1}\int_{B_{s+\frac{\delta}{2}}} u^{n+2} \, \\
        & \hspace{2cm}+ \, C_\epsilon \, (t_1-t_0) \,\left( \frac{2}{\delta} \right)^{d(n+1)+4}\, \left( \sup_{t_0\leq t\leq t_1}\int_{B_{s+\frac{\delta}{2}}} u(t) \right)^{n+2}.
        \end{split}
    \end{equation}
    Adding the estimates estimates \eqref{eq-lemma-2.1-iteration-setup-3} and \eqref{eq-lemma-2.1-iteration-setup-4}, we have 
    \begin{equation}\label{eq-lemma-2.1-iteration-master2}\begin{split}
          \delta^{-2}\int_{t_0}^{t_1}\int_{B_{s}} u^{m+1} \,&  + \, \delta^{-4}\int_{t_0}^{t_1}\int_{B_{s}} u^{n+2} \, \\
            &\lesssim_{(n,d)} \, \epsilon\left(\frac{\delta}{2}\right)^2\int_{t_0}^{t_1}\int_{B_{s+\frac{\delta}{2}}}\abs{\nabla u^{\frac{n+2}{6}}}^6 \, +\, \epsilon \left( \frac{2}{\delta} \right)^4\int_{t_0}^{t_1}\int_{B_{s+\frac{\delta}{2}}} u^{n+2} \, \\
            &\qquad  + \, \epsilon\left(\frac{\delta}{2}\right)^2\int_{t_0}^{t_1}\int_{B_{s+\frac{\delta}{2}}}\abs{\nabla u^{\frac{m+1}{4}}}^4 \, +\, \epsilon \left( \frac{2}{\delta} \right)^2\int_{t_0}^{t_1}\int_{B_{s+\frac{\delta}{2}}} u^{m+1} \, \\
            & \hspace{2cm}+ \, C_\epsilon \, (t_1-t_0) \,\left( \frac{2}{\delta} \right)^{dm+2}\, \left( \sup_{t_0\leq t\leq t_1}\int_{B_{s+\frac{\delta}{2}}} u(t) \right)^{m+1} \\
            &\hspace{2cm}+ \, C_\epsilon \, (t_1-t_0) \,\left( \frac{2}{\delta} \right)^{d(n+1)+4}\, \left( \sup_{t_0\leq t\leq t_1}\int_{B_{s+\frac{\delta}{2}}} u(t) \right)^{n+2}.
    \end{split}
    \end{equation}
    We are now prepared to apply Lemma \ref{lemma-iteration-trick}. To that end, let $R>0$ be arbitrary. Define 
    \begin{align*}
        \mathscr V(s) & = \sup_{t_0\leq t\leq t_1}\int_{B_{R-s}} u(t)^2, \\
        \mathscr E(s,\delta) & = \delta^2\int_{t_0}^{t_1}\int_{B_{R-s}} \left[ \abs{\nabla u^{\frac{n+2}{6}}}^6 + \abs{\nabla u^{\frac{m+1}{4}}}^4 
         \right], \\
         \mathscr U(s,\delta) & = \delta^{-2}\int_{t_0}^{t_1}\int_{B_{R-s}} u^{m+1} \, + \, \delta^{-4}\int_{t_0}^{t_1}\int_{B_{R-s}} u^{n+2}, \\
         \mathscr F(s,\delta) & = (t_1-t_0) \,\delta^{-(dm+2)}\, \left( \sup_{t_0\leq t\leq t_1}\int_{B_{R-s}} u(t) \right)^{m+1} \\
            &\hspace{2cm}+ (t_1-t_0) \,\delta^{-(d(n+1)+4)}\, \left( \sup_{t_0\leq t\leq t_1}\int_{B_{R-s}} u(t) \right)^{n+2}, \\
            \mathscr A(s,\delta) &= \int_{B_{R-s}} u(t_0)^2 \, + \, \delta^{2}\int_{B_{R-s}} \abs{\nabla u(t_0)}^2.
    \end{align*}
    Re-writing the inequalities \eqref{eq-lemma-2.1-iteration-step2} and \eqref{eq-lemma-2.1-iteration-master2} in terms of these functions, we see that all of the conditions to apply Lemma \ref{lemma-iteration-trick} are met. Therefore, for all $(s,\delta)\in \lltriangle_R$ we have 
    \begin{equation}
        \label{eq-lemma-2.1-iteration-final}
        \begin{split}
        \sup_{t_0\leq t\leq t_1}\int_{B_{R-s-\delta}} u(t)^2 \,
         &\lesssim_{(n,m,d)} \, \int_{B_{R-s}} u(t_0)^2 \, + \, \delta^2\int_{B_{R-s}} \abs{\nabla u(t_0)}^2 \\ 
         &\hspace{1cm} + \,\left(t_1-t_0 \right) \left[\delta^{-(dm+2)}\, \left( \sup_{t_0\leq t\leq t_1}\int_{B_{R-s}} u(t) \right)^{m+1} \, \right. \\
         &\hspace{3.5cm}\left.+ \, \delta^{-(d(n+1)+4)}\, \left( \sup_{t_0\leq t\leq t_1}\int_{B_{R-s}} u(t) \right)^{n+2}  \right].
         \end{split}
    \end{equation}
    Since $R$ was arbitrary, the desired inequality \eqref{eq-local-L1} follows after using Hölder's inequality to estimate
    \[
        \int_{B_s} u(t)^2 \, \geq \, \mathcal{L}^d\left(B_s\right)\inv \left( \int_{B_s} u(t)\right)^2.
    \]
    
\end{proof}

\begin{lemma}\label{lemma-local-L2}
    Let $u$ be a strongly energy dissipating solution satisfying \eqref{ass-u0} and \eqref{ass-nm}. There exists a constant $C=C(n,m,d)>0$ such that for any $x\in \R^d$, the following holds for any $T\geq 0$ and any $s,\delta$ such that $0< s<s+\delta$:
    \begin{equation}
        \label{eq-local-L2}
        \begin{split}
       \sup_{t\leq T} \, \int_{B_{s}} u(t)^2 \,  & \lesssim_{(n,m,d)} \, \int_{B_{s+\delta}} u_0^2 \,  + \, \delta^2 \int_{B_{s+\delta}}\abs{\nabla u_0}^2 \\
        & \hspace{1cm} + \, T\delta^{-2-\frac{d(m-1)}{2}} \left(\sup_{t\leq T} \, \int_{B_{s+\delta}} u(t)^2 \right)^{\frac{m+1}{2}} + \, T\delta^{-4-\frac{dn}{2}}\left( \sup_{t\leq T} \, \int_{B_{s+\delta}} u(t)^2 \right)^{\frac{n+2}{2}}.
        \end{split}
    \end{equation}
\end{lemma}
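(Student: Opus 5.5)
The plan is to repeat the proof of Lemma \ref{lemma-local-L1} essentially verbatim up to the interpolation step, taking $t_0=0$ and $t_1=T$. The only change is that the Gagliardo--Nirenberg interpolation is performed against the $L^2$-norm instead of the $L^1$-norm. The key intermediate estimate is \eqref{eq-lemma-2.1-iteration-step2}, which only uses \eqref{eq-energyest1} and the weak formulation \eqref{eq-weakform}. It bounds $\sup_{t\le T}\int_{B_s}u^2$ together with $(\delta/2)^2\int_0^T\int_{B_s}\bigl[\abs{\nabla u^{\frac{n+2}{6}}}^6+\abs{\nabla u^{\frac{m+1}{4}}}^4\bigr]$ by the data terms plus $\delta^{-2}\int_0^T\int u^{m+1}$ and $\delta^{-4}\int_0^T\int u^{n+2}$ over $B_{s+\delta/2}$. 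This will again furnish the first inequality in \eqref{eq-iteration-lemma-general}, with $\mathscr V$, $\mathscr E$, $\mathscr U$, $\mathscr A$ defined exactly as before.

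Next I redo \eqref{eq-lemma-2.1-GN-m-term} using Lemma \ref{lemma-GN} for $w=\varphi u^{\frac{m+1}{4}}$, with $p=4$, $r=4$ on the gradient, and $q=\frac{8}{m+1}$. The $L^q$-norm of $w$ is then $(\int \varphi^{8/(m+1)}u^2)^{(m+1)/8}$. The exponent is $\vartheta=\frac{\frac{m+1}{8}-\frac14}{\frac{m+1}{8}+\frac1d-\frac14}=\frac{d(m-1)}{d(m-1)+8}$, which lies in $(0,1)$ since $m>1$. This gives
\[
\delta^{-2}\int u^{m+1}\lesssim \delta^{-2}\norm{\nabla w}_4^{4\vartheta}\Bigl(\int_{B_{s+\delta/2}}u^2\Bigr)^{\frac{(m+1)(1-\vartheta)}{2}}.
\]
Young's inequality, splitting off $\epsilon\delta^2\norm{\nabla w}_4^4$, leaves $C_\epsilon\,\delta^{-\frac{2+2\vartheta}{1-\vartheta}}(\int u^2)^{\frac{m+1}{2}}$. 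A short computation gives $\frac{2+2\vartheta}{1-\vartheta}=2+\frac{d(m-1)}{2}$, which matches the claimed exponent by scaling. Indeed, $\delta^{-2}\int u^{m+1}$ has homogeneity $\delta^{d-2}[u]^{m+1}$, while $(\int u^2)^{(m+1)/2}$ has homogeneity $\delta^{d(m+1)/2}[u]^{m+1}$.

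For the $n$-term I do the same with $w=\varphi u^{\frac{n+2}{6}}$, $p=r=6$ and $q=\frac{12}{n+2}$. Young's inequality then produces $\delta^{-4-\frac{dn}{2}}(\int u^2)^{\frac{n+2}{2}}$, again consistent with scaling. The gradient of the cutoff produces lower-order terms $\epsilon\delta^{-2}\int u^{m+1}$ and $\epsilon\delta^{-4}\int u^{n+2}$ on $B_{s+\delta/2}$, as in the previous proof. Integrating in time and bounding $\int u^2$ by its supremum over $[0,T]$ gives the second inequality of \eqref{eq-iteration-lemma-general}, now with
\[
\mathscr F(s,\delta)=T\delta^{-2-\frac{d(m-1)}{2}}\Bigl(\sup_{t\le T}\int_{B_{R-s}}u^2\Bigr)^{\frac{m+1}{2}}+T\delta^{-4-\frac{dn}{2}}\Bigl(\sup_{t\le T}\int_{B_{R-s}}u^2\Bigr)^{\frac{n+2}{2}}.
\]

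Finally I verify the hypotheses of Lemma \ref{lemma-iteration-trick}. Monotonicity in $s$ is clear, and the scaling exponents are $E=2$, $A=0$, $U=-4$ and $F=-4-\frac{dn}{2}$, which are finite. Applying the lemma with $R$ arbitrary yields \eqref{eq-local-L2} directly, with no final Hölder step. The main obstacle I expect is the admissibility of Gagliardo--Nirenberg with $q<p$ and the condition $\frac1r-\frac1d<\frac1p$. Here $q=8/(m+1)\in(8/3,4)$ is less than $p=4$, and similarly $12/(n+2)<6$ holds for $n\ge2$, so the dimensional condition holds trivially for $d\le3$. The one point needing care is that the weight $\varphi^{8/(m+1)}$ with $8/(m+1)>1$ stays bounded by $\mathbf 1_{B_{s+\delta/2}}$, so this is fine as well.
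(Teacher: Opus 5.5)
Your proposal is correct and follows the paper's route exactly: the paper's proof of this lemma just says to rerun the proof of Lemma \ref{lemma-local-L1}, interpolating against the $L^2$-norm instead of the $L^1$-norm in \eqref{eq-lemma-2.1-GN-m-term} and \eqref{eq-lemma-2.1-GN-n-term}. Your explicit bookkeeping checks out: splitting off $\epsilon\delta^2\|\nabla w\|^p_p$, consistent with $\mathscr E$, gives $\vartheta=\frac{d(m-1)}{d(m-1)+8}$ and $\theta=\frac{dn}{dn+12}$, hence the powers $2+\frac{d(m-1)}{2}$ and $4+\frac{dn}{2}$, and you are right that no final H\"older step is needed.
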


\begin{proof}
    Identical to the proof of the previous lemma, except that one interpolates with the $L^2$-norm instead of the $L^1$-norm in the steps \eqref{eq-lemma-2.1-GN-m-term} and \eqref{eq-lemma-2.1-GN-n-term}.
    
\end{proof}

\section{Lower Bounds on Entropy Production}\label{section-lemma2}

In this section, we prove that for strongly energy dissipating solutions to \eqref{eq-TFPM}, an upper bound on 
\[
    \mathcal L^d\left(  \mathrm{supp} \, u(t)\right), \quad t\in (0,t_0]
\]
implies a lower bound for the entropy at the time $t_0$:
\[
    \int_{\R^d} \, u(t_0)^{\alpha+1}.
\]
This approach was used first to find lower bounds on the propagation rate for thin-film equations in \cite{Fischer_Asymptotic}. We will succeed in adapting this technique to our purposes, but an extra condition will arise when $d=3$. The coming proof of Theorem \ref{thm-main-result-upper} also uses the same approach as is found in  \cite{Fischer_Asymptotic}, and so it will also be necessary to derive a weighted monotonicity formula for the entropy. That is the content of the second result of this section. 

\begin{lemma}\label{lemma-entropy-lower}
    Let $d\in \{1,2\}$ and $u$ be a strongly energy dissipating solution satisfying \eqref{ass-u0}, \eqref{ass-nm}, and such that $u$ has finite speed of propagation. For any $\alpha \in \left(\max\{\frac{1}{2}-n,-m,-1\}, 2-n\right)$, there exists a constant $C=C(\alpha,n,m,d)$ such that for any $t_0 >0$, we have 
    \begin{equation}\label{eq-entropy-lower}\begin{split}
        \int_{\R^d} \, u(t_0)^{\alpha+1} \, & \gtrsim_{(\alpha,n,m,d)} \,   \norm{u_0}_{L^1}^{\alpha+n+1} \int_0^{t_0} \mathcal{L}^d\left( \mathrm{supp}\, u(t) \right)^{-\frac{d(\alpha+n)+4}{d}} \mathrm{d}t  \\
        &\hspace{3.5cm} + \norm{u_0}_{L^1}^{\alpha+m} \int_0^{t_0} \mathcal{L}^d\left( \mathrm{supp}\, u(t) \right)^{-\frac{d(\alpha+m-1)+2}{d}} \, \mathrm{d}t  .\end{split}
    \end{equation}
    Moreover, if the further condition $\alpha > \frac{1}{3}-m$ is imposed, the above inequality holds for $d=3$ as well. 
\end{lemma}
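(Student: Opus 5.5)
The plan is to use the $\alpha$-entropy estimate \eqref{eq-alphaentropy} globally in space and bound its dissipation terms from below by Sobolev/Poincaré-type inequalities on the support, in the spirit of \cite{Fischer_Asymptotic}. Since $u$ has finite speed of propagation and compactly supported initial data, I first show that $\mathrm{supp}\,u(t)$ stays bounded on $[0,t_0]$. Then in \eqref{eq-alphaentropy} I take $\zeta=\zeta_R$ to be a cutoff equal to one on a large ball containing these supports, with $t_0=0$ and $t_1=t_0$. The right-hand side vanishes because $\nabla\zeta_R$ and $D^2\zeta_R$ are supported away from $\mathrm{supp}\,u$. For $\alpha\in(-1,0)$ we have $\frac{1}{\alpha(\alpha+1)}<0$, so the inequality rearranges to
\[
\int_{\R^d}u(t_0)^{\alpha+1}\gtrsim \int_{\R^d}u_0^{\alpha+1}+\int_0^{t_0}\!\!\int \abs{\nabla u^{\frac{\alpha+n+1}{4}}}^4+\int_0^{t_0}\!\!\int \abs{\nabla u^{\frac{\alpha+m}{2}}}^2 .
\]
I discard the initial entropy term. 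For $\alpha\in(0,2-n)$ the sign is reversed, so the claim must be read with the appropriate orientation. I expect the intended range to be $\alpha<0$, and I will handle only that case, or reduce to it.

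The key step is a pointwise-in-time lower bound for each dissipation term in terms of mass and support measure. Write $w=u(t)^{\frac{\alpha+m}{2}}$ and $S=\mathrm{supp}\,u(t)$. Set $p=\frac{2}{\alpha+m}$, so that $\int w^p=\norm{u(t)}_{L^1}=\norm{u_0}_{L^1}$ by mass conservation. Mass conservation follows from testing \eqref{eq-weakform} with $\varphi\equiv1$ near the support, which is legitimate by finite speed of propagation.

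By Hölder on $S$ and then Gagliardo--Nirenberg/Sobolev (Lemma \ref{lemma-GN} with $c_2=0$ on $\R^d$), I get
\[
\norm{w}_{L^p}\le \mathcal L^d(S)^{\frac1p-\frac1q}\norm{w}_{L^q}\lesssim \mathcal L^d(S)^{\frac1p-\frac1q}\norm{\nabla w}_{L^2}^{a}\norm{w}_{L^p}^{1-a}.
\]
In the limiting Sobolev case one can instead use Faber--Krahn style $\norm{w}_{L^2}\lesssim \mathcal L^d(S)^{1/d}\norm{\nabla w}_{L^2}$ followed by Hölder. In either version the result is
\[
\int\abs{\nabla w}^2\gtrsim \norm{u_0}_{L^1}^{\alpha+m}\mathcal L^d(S)^{-\frac{d(\alpha+m-1)+2}{d}}.
\]
The exponent is dictated by scaling: with $u\sim M/|S|$ on a set of diameter $|S|^{1/d}$, one computes $\int|\nabla u^{\frac{\alpha+m}{2}}|^2\sim (M/|S|)^{\alpha+m}|S|^{1-2/d}$.

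The thin-film term is treated identically with $v=u^{\frac{\alpha+n+1}{4}}$ and the $L^4$ gradient norm. Scaling gives $(M/|S|)^{\alpha+n+1}|S|^{1-4/d}$, which matches the first term of \eqref{eq-entropy-lower}. Integrating in time over $(0,t_0)$ then yields the claim.

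I expect the main obstacle to be the admissibility of the interpolation, in particular when $p<1$. Since $\alpha+m$ may exceed $2$ but typically $p=\frac{2}{\alpha+m}$ is close to $1$, the Gagliardo--Nirenberg application with $q$ below $1$ needs the quasi-norm version, which Lemma \ref{lemma-GN} allows via $0<q<p$. The real constraint is that the Sobolev exponent $q=2^*=6$ in $d=3$ must exceed $p$, and that the Hölder step yields the right power. For $d=3$ with the $L^2$-gradient term, the embedding $\dot H^1\subset L^6$ requires that $w$ vanish at infinity and that $\nabla w\in L^2$ be finite in a usable way. I would check where the condition $\alpha>\frac13-m$ enters, presumably through the requirement $p<6$, i.e. $\frac{2}{\alpha+m}<6$, which is precisely $\alpha+m>\frac13$. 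In $d=1,2$ no such restriction arises, since the embedding goes into every $L^q$ (or $L^\infty$). I would verify that the corresponding condition for the $L^4$ term, namely $\frac{4}{\alpha+n+1}<$ the $W^{1,4}$ Sobolev exponent, is automatic under \eqref{ass-nm}.
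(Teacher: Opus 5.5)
Your proposal is correct and follows the paper's proof. Both arguments use the global $\alpha$-entropy estimate (with $\zeta\equiv 1$ on the supports), then Hölder on $\mathrm{supp}\,u(t)$ and Gagliardo--Nirenberg against the conserved mass, with $\alpha>\frac13-m$ entering exactly as the requirement $\frac{2}{\alpha+m}<6$. Your single choice of $q\in(\frac{2}{\alpha+m},2^*)$ merges the paper's two cases $\alpha+m>1$ (where it takes $q=2$) and $\alpha+m<1$. Your worries about $\alpha>0$ and about $\frac{2}{\alpha+m}<1$ are moot, since $n\ge 2$ and $m<2$ force $\alpha<2-n\le 0$ and $\alpha+m<2$.
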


\begin{proof}
    This follows as a consequence of the $\alpha$-entropy estimate \eqref{eq-alphaentropy} and the Gagliardo--Nirenberg inequality \eqref{eq-GN}. This proof follows \cite[Lemma 12]{Fischer_Asymptotic}. By \eqref{eq-alphaentropy}, we have for any $\alpha \in \left(\max\{\frac{1}{2}-n,-m,-1\},2-n\right) \cap (-1,0)$ and $t_0\geq 0$:
    \begin{equation}
        \label{eq-lemma-3.1-initial}
        \int_{\R^d} \, u(t_0)^{\alpha+1} \, \gtrsim_{(\alpha,n,m)} \, \int_0^{t_0}\int_{\R^d} \abs{\nabla u^{\frac{\alpha+n+1}{4}}}^4 + \int_0^{t_0}\int_{\R^d} \abs{\nabla u^{\frac{\alpha+m}{2}}}^2.
    \end{equation}
    Note here that we are already using the property of finite speed of propagation to justify using $\zeta = 1$ in \eqref{eq-alphaentropy}. Now, the integrals on the right may be estimated in terms of the mass and the Lebesgue measure of the support via the following estimate, which holds for all $t>0$:
    \begin{equation}
        \label{eq-lemma-3.1-GN-nterm}\begin{split}
        \norm{u_0}_{1}^{\alpha+n+1} & = \norm{u(t)}_{1}^{\alpha+n+1}  \, \leq \, \mathcal{L}^d\left( \mathrm{supp}\, u(t) \right)^{\alpha+n} \, \norm{u(t)}_{\alpha+n+1}^{\alpha+n+1} \\
        &\lesssim_{(n,\alpha,d)} \,  \mathcal{L}^d\left( \mathrm{supp}\, u(t) \right)^{\alpha+n} \, \norm{\nabla u^{\frac{\alpha+n+1}{4}}}_4^{4\theta} \, \norm{u_0}_{1}^{(1-\theta)(\alpha+n+1)}.
        \end{split}
    \end{equation}
    Here we have applied the Gagliardo--Nirenberg inequality \eqref{eq-GN} with 
    \[
        \theta = \frac{d(\alpha+n)}{d(\alpha+n)+4}.
    \]
    We have also used the fact that $\alpha+n+1>1$ for all $\alpha\in (-1,0)$. Rearranging \eqref{eq-lemma-3.1-GN-nterm} to solve for the gradient term before integrating in time yields
    \begin{equation}
        \label{eq-lemma-3.1-nterm-done}
        \int_0^{t_0}\int_{\R^d} \abs{\nabla u^{\frac{\alpha+n+1}{4}}}^4 \,  \gtrsim_{(n,\alpha,d)} \, \norm{u_0}_1^{\alpha+n+1} \int_0^{t_0} \mathcal{L}^d\left( \mathrm{supp}\, u(t) \right)^{-\frac{d(\alpha+n)+4}{d}} \, \mathrm{d}t.
    \end{equation}
    To estimate the second integral on the right hand side of \eqref{eq-lemma-3.1-initial}, we first consider the case in which $\alpha+m>1$. In that event, the computation is analogous, and Hölder's inequality and the Gagliardo--Nirenberg inequality yield 
    \begin{equation}
        \label{eq-lemma-3.1-GN-mterm}\begin{split}
        \norm{u_0}_{1}^{\alpha+m} & = \norm{u(t)}_{1}^{\alpha+m}  \, \leq \, \mathcal{L}^d\left( \mathrm{supp}\, u(t) \right)^{\alpha+m-1} \, \norm{u(t)}_{\alpha+m}^{\alpha+m} \\
        &\lesssim_{(m,\alpha,d)} \,  \mathcal{L}^d\left( \mathrm{supp}\, u(t) \right)^{\alpha+m-1} \, \norm{\nabla u^{\frac{\alpha+m}{2}}}_2^{2\xi} \, \norm{u_0}_{1}^{(1-\xi)(\alpha+m)},
        \end{split}
    \end{equation}
    where 
    \[
        \xi = \frac{d(\alpha+m-1)}{d(\alpha+m-1)+2}.
    \]
    Solving \eqref{eq-lemma-3.1-GN-mterm} now for the gradient term before integrating in time yields
    \begin{equation}
        \label{eq-lemma-3.1-mterm-done}
        \int_0^{t_0}\int_{\R^d} \abs{\nabla u^{\frac{\alpha+m}{2}}}^2 \,  \gtrsim_{(m,\alpha,d)} \, \norm{u_0}_1^{\alpha+m} \int_0^{t_0} \mathcal{L}^d\left( \mathrm{supp}\, u(t) \right)^{-\frac{d(\alpha+m-1)+2}{d}} \, \mathrm{d}t.
    \end{equation}
    In the case that $\alpha+m < 1$, the first inequality in \eqref{eq-lemma-3.1-GN-mterm} is no longer valid. To amend this issue, let $p>0$ be such that 
    \[
        p> \frac{2}{\alpha+m}.
    \]
    Using Hölder's inequality and the Gagliardo--Nirenberg inequality, we have 
    \begin{equation}
        \label{eq-lemma-3.1-mterm-case2}
        \begin{split}
        \norm{u_0}_{1}^{p\left(\frac{\alpha+m}{2}\right)} & = \norm{u(t)}_{1}^{p\left(\frac{\alpha+m}{2}\right)}  \, \leq \, \mathcal{L}^d\left( \mathrm{supp}\, u(t) \right)^{p\left(\frac{\alpha+m}{2}\right)\left(1-\frac{2}{p(\alpha+m)} \right)} \, \norm{u(t)}_{p\left(\frac{\alpha+m}{2}\right)}^{p\left(\frac{\alpha+m}{2}\right)} \\
        &\lesssim_{(m,\alpha,d,p)} \,  \mathcal{L}^d\left( \mathrm{supp}\, u(t) \right)^{p\left(\frac{\alpha+m}{2}\right)\left(1-\frac{2}{p(\alpha+m)} \right)} \, \norm{\nabla u^{\frac{\alpha+m}{2}}}_2^{p\vartheta} \, \norm{u_0}_{1}^{p(1-\vartheta)(\frac{\alpha+m}{2})},
        \end{split}
    \end{equation}
    where 
    \[
        \vartheta = \frac{\frac{\alpha+m}{2}-\frac{1}{p}}{\frac{1}{d}-\frac{1}{2}+\frac{\alpha+m}{2}} = \frac{d(\alpha+m)\left(1-\frac{2}{p(\alpha+m)} \right)}{2+d(\alpha+m-1)}.
    \]
    Note that in order to ensure that the conditions of the Gagliardo--Nirenberg inequality are met, $p$ must be less than the Sobolev exponent, i.e., 
    \[
    \frac{1}{p}>\frac{1}{2}-\frac{1}{d}.
    \]
     On the other hand, the original assumption on $p$ enforces 
     \[
        \frac{1}{p}< \frac{\alpha+m}{2}.
     \]
     Therefore, we must assure ourselves of the fact that 
     \[
        \frac{1}{2}-\frac{1}{d}  <  \frac{\alpha+m}{2}.
     \]
     For $d\in \left\{1,2\right\}$, this is guaranteed whenever $\alpha\in (-1,0)$ and $m>1$. When $d=3$, we require further that 
    \[
        \alpha > \frac{1}{3}-m.
    \]
    From \eqref{eq-lemma-3.1-mterm-case2}, we solve for the gradient term again to obtain
    \[
        \norm{\nabla u^{\frac{\alpha+m}{2}}}_2^{2} \, \gtrsim_{(m,\alpha,d,p)} \,  \norm{u_0}_{1}^{\alpha+m}\,\mathcal{L}^d\left( \mathrm{supp}\, u(t) \right)^{\frac{p}{\vartheta}\left(\alpha+m\right)\left(1-\frac{2}{p(\alpha+m)} \right)} = \norm{u_0}_{1}^{\alpha+m}\,\mathcal{L}^d\left( \mathrm{supp}\, u(t) \right)^{-\frac{d(\alpha+m-1)+2}{d}}.
    \]
    Integrating in time gives \eqref{eq-lemma-3.1-mterm-done} once again. Plugging the estimates \eqref{eq-lemma-3.1-nterm-done} and \eqref{eq-lemma-3.1-mterm-done} into \eqref{eq-lemma-3.1-initial} yields the desired inequality. 
    
\end{proof}

 We move on now to the weighted monotonicity formula. For this result, we rely heavily on the hard work already completed in \cite{FischerWTU,Fischer_Asymptotic,Fischer2016}, which will spare us much of the technicality of the argument, since we need only justify the addition of terms arising from the porous-medium equation.

\begin{lemma}
    \label{lemma-entropy-monotonicity}
    Let $u$ be a strongly energy dissipating solution satisfying \eqref{ass-u0}, \eqref{ass-nm}, and such that $u$ has finite speed of propagation. Let $x_0 \in \R^d\setminus \mathrm{supp} \, u_0$. There exists $\alpha \in \left(\max\{\frac{1}{2}-n,-m,-1\}, 2-n\right)$ and $\gamma \leq - d$ such that 
    \begin{equation}
        \label{eq-gamma-properties}
        \gamma + 4\left(\frac{\alpha+1}{n}\right)+d>0, \quad \gamma + 2\left(\frac{\alpha+1}{m-1} \right) +d >0,
    \end{equation}
    and we have for all $0\leq t_0 <t_1< T_w(x_0)$ (recall Definition \ref{defn-arrivaltime}):
    \begin{equation}
        \label{eq-entropy-monotonicity}
        \begin{split}
        \int_{\R^d} \abs{x-x_0}^\gamma u(t)^{\alpha+1} \, \bigg\vert_{t=t_0}^{t=t_1} \, & \geq \, \tau_1\int_{t_0}^{t_1}\left( \int_{\mathrm{supp}\, u(t)} \abs{x-x_0}^{\gamma+2\left( \frac{\alpha+1}{m-1} \right)} \right)^{-\frac{m-1}{\alpha+1}}\left( \int_{\R^d} \abs{x-x_0}^\gamma u(t)^{\alpha+1}\right)^{\frac{\alpha+m}{\alpha+1}} \, \mathrm{d}t \\
         & \hspace{1cm} + \tau_2\int_{t_0}^{t_1}\left( \int_{\mathrm{supp}\, u(t)} \abs{x-x_0}^{\gamma+4\left( \frac{\alpha+1}{n} \right)}  \right)^{-\frac{n}{\alpha+1}}\left(\int_{\R^d} \abs{x-x_0}^\gamma u(t)^{\alpha+1}\right)^{\frac{\alpha+n+1}{\alpha+1}} \, \mathrm{d}t.
        \end{split}
    \end{equation}
    Here, the constants $\tau_1= \tau_1(\alpha,m,\gamma,d)$ and $\tau_2 =\tau_2(\alpha,n,\gamma,d)$ are positive.
    
\end{lemma}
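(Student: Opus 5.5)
The plan is to test the weak formulation (more precisely, the computation underlying the $\alpha$-entropy estimate \eqref{eq-alphaentropy}) against the singular weight $\abs{x-x_0}^\gamma$. This weight is admissible because $t_1<T_w(x_0)$: there is some $r>0$ with $u\equiv 0$ on $[0,t_1]\times B_r(x_0)$, so the singularity at $x_0$ never meets the support. Finite speed of propagation guarantees that the supports stay bounded on $[0,t_1]$, so no cutoff at infinity is needed. We may therefore replace $\zeta^4$ by a smooth function agreeing with $\abs{x-x_0}^\gamma$ on the relevant region.

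The thin-film part of the computation is exactly the one carried out in \cite{FischerWTU,Fischer_Asymptotic,Fischer2016}. For suitable $\alpha$ and $\gamma\le -d$ it gives
\[
\frac{d}{dt}\int\abs{x-x_0}^\gamma u^{\alpha+1}\gtrsim \int \abs{x-x_0}^{\gamma}\abs{\nabla u^{\frac{\alpha+n+1}{4}}}^4+\dots,
\]
up to a nonnegative collection of terms. After a weighted Hardy/Gagliardo--Nirenberg step, Fischer's weighted functional inequality yields the $\tau_2$ term; this is where the first condition in \eqref{eq-gamma-properties} enters. I take this part over verbatim.

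The new work is the porous-medium contribution. Testing the second-order part with $\abs{x-x_0}^\gamma u^\alpha$, with the sign convention of the entropy so that $\alpha(\alpha+1)>0$ after normalizing, produces
\[
c_{\alpha,m}\int\abs{x-x_0}^\gamma\abs{\nabla u^{\frac{\alpha+m}{2}}}^2 \;-\; c'\int \Delta\bigl(\abs{x-x_0}^\gamma\bigr)\,u^{\alpha+m}.
\]
Since $\Delta\abs{x}^\gamma=\gamma(\gamma+d-2)\abs{x}^{\gamma-2}$, which is nonnegative for $\gamma\le -d$, the second term has a favorable sign once the sign of $c'$ is tracked; this sign bookkeeping is where the choice of $\alpha$ and $\gamma\le -d$ is dictated. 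If the sign is unfavorable, I would instead absorb it with the weighted Hardy inequality
\[
\int\abs{x}^{\gamma-2}w^2\lesssim\int\abs{x}^\gamma\abs{\nabla w}^2,\qquad w=u^{\frac{\alpha+m}{2}},
\]
which holds with constant $4/(\gamma+d-2)^2$ because $w$ vanishes near $x_0$. Absorption then works when $\gamma$ is sufficiently negative relative to $\alpha,m$.

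It remains to bound $\int\abs{x-x_0}^\gamma\abs{\nabla w}^2$ from below by $\bigl(\int_{\mathrm{supp}\,u}\abs{x-x_0}^{\gamma+2\frac{\alpha+1}{m-1}}\bigr)^{-\frac{m-1}{\alpha+1}}\bigl(\int\abs{x-x_0}^\gamma u^{\alpha+1}\bigr)^{\frac{\alpha+m}{\alpha+1}}$. I would write $u^{\alpha+1}=w^{2\frac{\alpha+1}{\alpha+m}}$ and apply Hölder on the support with the weight split $\abs{x}^\gamma=\abs{x}^{a}\abs{x}^{b}$, pairing $\int\abs{x}^{\gamma-2}w^2$ against the measure of the support weighted by $\abs{x}^{\gamma+2\frac{\alpha+1}{m-1}}$. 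The exponents balance exactly: the Hölder exponent is $q=\frac{\alpha+m}{\alpha+1}$ with conjugate $\frac{\alpha+m}{m-1}$. Hardy then replaces $\int\abs{x}^{\gamma-2}w^2$ by the gradient term, and the second condition in \eqref{eq-gamma-properties} ensures that the weighted support integral is finite (locally integrable at infinity and near $0$ in the right sense). Integrating in time from $t_0$ to $t_1$ gives \eqref{eq-entropy-monotonicity}.

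The main obstacle is the simultaneous choice of $(\alpha,\gamma)$. It must satisfy the admissible range of $\alpha$, $\gamma\le -d$, both inequalities of \eqref{eq-gamma-properties}, Fischer's positivity condition for the fourth-order weighted form, and the Hardy absorption for the second-order cross term. I would try $\alpha$ close to $2-n$ or close to $-1$, as in Fischer, and check that the porous-medium constraints are compatible with it.
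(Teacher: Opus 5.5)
Your architecture is the paper's. You test \eqref{eq-weakform} with the weight $\psi=\abs{x-x_0}^\gamma$, which is admissible for $t<T_w$ by finite speed of propagation. You take the fourth-order contribution from Fischer, integrate the second-order term by parts, and close with H\"older using the exponents $\frac{\alpha+m}{\alpha+1}$ and $\frac{\alpha+m}{m-1}$.

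\textbf{The gap.} The step you call the main obstacle is left open: the sign of the porous-medium contribution and the simultaneous choice of $(\alpha,\gamma)$. Your contingency would not work. Absorbing a bad-signed term by Hardy ``with $\gamma$ sufficiently negative'' is not available, because the first condition in \eqref{eq-gamma-properties} forces $\gamma>-d-4\frac{\alpha+1}{n}>-d-2$. As written, the existence of an admissible pair is therefore not established.

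\textbf{How to close it.} A one-line sign computation suffices. Since $u^\alpha$ is not admissible for $\alpha<0$, test with $\psi(u+\epsilon)^\alpha$:
\[
-\int\nabla\bigl(\psi(u+\epsilon)^\alpha\bigr)\cdot u^{m-1}\nabla u=\abs{\alpha}\int\psi\,(u+\epsilon)^{\alpha-1}u^{m-1}\abs{\nabla u}^2+\int\Delta\psi\int_0^u(s+\epsilon)^\alpha s^{m-1}\,\mathrm{d}s .
\]
Both terms are nonnegative for every $\alpha\in(-1,0)$ and $\gamma\leq-d$, since $\Delta\psi=\gamma(\gamma+d-2)\abs{x-x_0}^{\gamma-2}$ with $\gamma(\gamma+d-2)>0$. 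The relevant sign fact is $\alpha<0<\alpha+1$; note that $\alpha(\alpha+1)<0$ on this range, contrary to your normalisation remark. Letting $\epsilon\to0$ in the second term by dominated convergence needs only $\alpha>-m$, which is automatic.

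Consequently the porous-medium part imposes no restriction on $(\alpha,\gamma)$. The second inequality in \eqref{eq-gamma-properties} follows from the first, because $\frac{2}{m-1}>2\geq\frac{4}{n}$ and $\alpha+1>0$. Fischer's pair can therefore be used unchanged.

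\textbf{Comparison with the paper.} Once the sign is settled, the paper discards the gradient term and feeds $\frac{\gamma(\gamma+d-2)}{\alpha+m}\int\abs{x-x_0}^{\gamma-2}u^{\alpha+m}$ directly into H\"older. Your alternative keeps the gradient term and converts it into the same integral via the weighted Hardy inequality. Its constant $4/(\gamma+d-2)^2$ is finite since $\gamma+d-2\leq-2$, so this route is also valid, just an unnecessary detour.

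\textbf{A smaller point.} The second condition in \eqref{eq-gamma-properties} is not what makes $\int_{\mathrm{supp}\,u(t)}\abs{x-x_0}^{\gamma+2\frac{\alpha+1}{m-1}}$ finite. For $t<T_w$ the support is bounded and stays a positive distance from $x_0$, so this integral is finite for any exponent. The condition is only needed later, when the integral is bounded by the one over $B(x_0,R(t))$ in the proof of Theorem~\ref{thm-main-result-upper}.
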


\begin{proof}
    Fix an arbitrary $x_0\in \R^d\setminus \mathrm{supp} \, u_0$. From here on, we shall suppress the dependence of $T_w(x_0)$ on $x_0$. Due to the property of finite speed of propagation, there exists a function $\psi\in C^\infty_c\left( (0,T_w)\times \R^d \right)$ such that 
    \[
        \psi(t,x) = \abs{x-x_0}^\gamma, \quad x\in \mathrm{supp}\, u(t), \quad t\in (0,T_w).
    \]
    Here we take $\gamma\leq -d$, but without further restrictions for now. Moreover, for any $\alpha\in (-1,0)$ and $\epsilon>0$, the function $\psi (u+\epsilon)^{\alpha}$ is a valid test function, and we have 
    \begin{equation}\label{eq-lemma-3.2-epsilon-regular}
        \frac{1}{\alpha+1}\int_{\R^d} \psi\left( u(t)+\epsilon\right)^{\alpha+1}\, \bigg\vert_{t_0}^{t_1} \, = \, \int_{t_0}^{t_1}\int_{\R^d} \nabla\left(\psi (u+\epsilon)^{\alpha}\right)\cdot u^n\nabla\Delta u \, -\, \int_{t_0}^{t_1}\int_{\R^d}\nabla\left(\psi (u+\epsilon)^\alpha \right)\cdot u^{m-1}\nabla u. 
    \end{equation}
    We turn first to the second integral on the right hand side: 
    \begin{equation}\label{eq-lemma-3.2-ibp}\begin{split}
        -\, \int_{t_0}^{t_1}\int_{\R^d}\nabla\left(\psi (u+\epsilon)^\alpha \right)\cdot u^{m-1}\nabla u \, & = \, -\alpha\int_{t_0}^{t_1}\int_{\R^d} \psi (u+\epsilon)^{\alpha-1}u^{m-1}\abs{\nabla u}^2 \\
        & \hspace{2cm}- \int_{t_0}^{t_1}\int_{\R^d} \nabla\psi\cdot (u+\epsilon)^{\alpha}u^{m-1}\nabla u \\
        & \geq \, - \int_{t_0}^{t_1}\int_{\R^d} \nabla\psi\cdot (u+\epsilon)^{\alpha}u^{m-1}\nabla u \\
        &= \, \int_{t_0}^{t_1}\int_{\R^d} \Delta \psi\, \left(\int_0^u (s+\epsilon)^{\alpha}s^{m-1} \, \mathrm{d}s\right).
        \end{split}
    \end{equation}
    Owing to restriction $\alpha>-m$, the following convergence holds point-wise almost everywhere:
    \[
        \int_0^u (s+\epsilon)^{\alpha}s^{m-1} \, \mathrm{d}s \longrightarrow \frac{u^{\alpha+m}}{\alpha+m}.
    \]
    Moreover, we have 
    \begin{align}
        \abs{\Delta \psi \, \left(\int_0^u (s+\epsilon)^{\alpha}s^{m-1} \, \mathrm{d}s\right)} = \abs{\Delta \psi \, \left(\int_0^u \left(\frac{s+\epsilon}{s}\right)^{\alpha}s^{\alpha+m-1} \, \mathrm{d}s\right)} \leq \frac{\abs{\Delta \psi} \, u^{\alpha+m}}{\alpha+m},
    \end{align}
    where the latter may be shown to be in $L^1\left((t_0,t_1)\times\R^d\right)$ via \eqref{eq-alphaentropy}. Indeed, whenever $$\alpha\in \left( \max\{\frac{1}{2}-n,-m,-1\},2-n \right),$$ we may apply the Gagliardo--Nirenberg inequality, together with Young's inequality to obtain the following: 
    \begin{align*}
        \int_{t_0}^{t_1} \norm{u(t)^{\alpha+m}}_{L^1} \, \mathrm dt \,& \lesssim_{(\alpha,n,m)} \, \int_{t_0}^{t_1}\norm{\nabla u(t)^{\frac{\alpha+m}{2}}}_{L^2}^{2}\, \mathrm d t \,  + \, (t_1-t_0)\, \left(\sup_{t_0\leq t\leq t_1}\,\norm{u(t)^{\alpha+1}}_{L^1}\right)^{\left(\frac{\alpha+m}{\alpha+1} \right)} \, < +\infty.
    \end{align*}
    The boundedness of the right side is a consequence of finite speed of propagation, which allows us to take $\zeta =1$ in \eqref{eq-alphaentropy}. Therefore, we may pass to the limit in \eqref{eq-lemma-3.2-ibp} using the Dominated Convergence Theorem and obtain: 
    \begin{equation}\label{eq-lemma-3.2-pass-to-limit1}
    \lim_{\epsilon\ra 0} \,\left( -\, \int_{t_0}^{t_1}\int_{\R^d}\nabla\left(\psi (u+\epsilon)^\alpha \right)\cdot u^{m-1}\nabla u \right) \, \geq \, \frac{1}{\alpha+m}\int_{t_0}^{t_1}\int_{\R^d} \Delta \psi \, u^{\alpha+m}.
    \end{equation}
    We compute 
    \[
        \mathbf{1}_{\{u>0\}}\Delta\psi = \mathbf{1}_{\{u>0\}} \cdot \gamma(\gamma-2+d)\abs{x-x_0}^{\gamma-2}.
    \]
    The coefficient in front is positive since $\gamma\leq -d$. We define now
    \[
        \tau_1 := \frac{\gamma(\gamma-2+d)}{\alpha+m}.
    \]
    Note that we have not yet made an explicit choice of $\alpha$, and the only condition which has been imposed is that $\alpha\in (\max\{\frac{1}{2}-n,-m,-1\},2-n)$ in order to use the estimate \eqref{eq-alphaentropy}. Turning now to the first integral on the right hand side of \eqref{eq-lemma-3.2-epsilon-regular}, we rely on the computations performed in \cite{FischerWTU,Fischer_Asymptotic,Fischer2016}. In \cite[Appendix]{FischerWTU}, it was proven that one may integrate by parts as one wishes and pass to the limit $\epsilon \ra 0$. The following consequence is important for our purposes: It is shown in \cite[Section 3.4]{Fischer_Asymptotic} that there exists a choice of $\alpha\in \left(\max\{\frac{1}{2}-n,-1\},0 \right)$, and a choice of $\gamma\leq -d$ such that 
    \begin{equation}\label{eq-aux-cond-n}
        \gamma+\left( \frac{\alpha+1}{n}\right) +d >0
    \end{equation}
    and 
    \begin{equation}\label{eq-fischers-hard-work}
        \lim_{\epsilon\ra 0} \, \int_{t_0}^{t_1}\int_{\R^d} \nabla\left(\psi (u+\epsilon)^{\alpha}\right)\cdot u^n\nabla\Delta u \, \geq \, \tau_2\int_{t_0}^{t_1}\int_{\R^d} \abs{x-x_0}^{\gamma-4}\, u^{\alpha+n+1}, 
    \end{equation}
    where $\tau_2>0$ depends on $(\alpha,\gamma,n,d)$\footnote{It should be noted that the two previously mentioned results hold originally for $n\in[2,32/11)$. However, due to a optimization of the procedure (s. \cite[Lemma 12]{Fischer2016}), this result was then extended to the full range $n\in [2,3)$ (s. \cite[Theorem 4]{Fischer2016})}. Our solution concept (recall Definition \ref{defn-solution}) entails all of the regularity to carry these results over into this setting, namely the regularity granted by the inequalities \eqref{eq-alphaentropy} and \eqref{eq-energyest1}. Furthermore, the assumption \eqref{ass-nm} tells us that if \eqref{eq-aux-cond-n} is fulfilled, then so too is
    \[
        \gamma + 2\left(\frac{\alpha+1}{m-1}\right)+d>0.
    \]
    Combining the estimates \eqref{eq-lemma-3.2-pass-to-limit1} and \eqref{eq-fischers-hard-work} in \eqref{eq-lemma-3.2-epsilon-regular} yields
    \begin{equation}
        \label{eq-lemma-3.2-prehol}
        \int_{\R^d} \abs{x-x_0}^{\gamma} \,u(t)^{\alpha+1}\, \bigg\vert_{t_0}^{t_1} \, \geq \, \tau_1\int_{t_0}^{t_1}\int_{\R^d} \abs{x-x_0}^{\gamma -2} \, u^{\alpha+m} \, + \, \tau_2\int_{t_0}^{t_1}\int_{\R^d} \abs{x-x_0}^{\gamma -4} \, u^{\alpha+n+1}.
    \end{equation}
    By Hölder's inequality, we have
    \begin{align*}
        \int_{\R^d} \abs{x-x_0}^{\gamma -2} \, u^{\alpha+m} \, \geq \, \left( \int_{\mathrm{supp}\, u(t)} \abs{x-x_0}^{\gamma+2\left( \frac{\alpha+1}{m-1}\right)}\right)^{-\frac{m-1}{\alpha+1}}\left(\int_{\R^d} \abs{x-x_0}^{\gamma}u^{\alpha+1} \right)^{\frac{\alpha+m}{\alpha+1}} \\
        \int_{\R^d} \abs{x-x_0}^{\gamma -4} \, u^{\alpha+n+1} \, \geq \, \left( \int_{\mathrm{supp}\, u(t)} \abs{x-x_0}^{\gamma+4\left( \frac{\alpha+1}{n}\right)}\right)^{-\frac{n}{\alpha+1}}\left(\int_{\R^d} \abs{x-x_0}^{\gamma}u^{\alpha+1} \right)^{\frac{\alpha+n+1}{\alpha+1}}.
    \end{align*}
    Thus, the result follows after substituting the above inequalities into \eqref{eq-lemma-3.2-prehol}. 

\end{proof}

\section{Proofs of the Main Results: The Lower Bound for $T_w(x_0)$}\label{section-lower}

This section is dedicated to the proofs of Theorem \ref{thm-fsop}, Theorem \ref{thm-main-result-lower}, and Corollary \ref{col-biggest-ball}. Throughout the proofs of this section, we will omit the dependence of $R_0(x_0)$, $T_r(x_0)$, $T_{R_0}(x_0)$, and $T_w(x_0)$ on $x_0$ when $x_0$ is apparent from context. The proofs employ a novel version of Stampacchia's lemma for a Stampacchia-type inequality with multiple weights. The statement and the proof is in the same vein as in \cite[Lemma 3.1]{GGDP}, which itself generalizes the original lemma from \cite{stamp63}. Note that in contrast to the Stampacchia-type lemmas of \cite{GS} or \cite{ST}, which apply to systems of functions which each fulfill a Stampacchia-type inequality, only \emph{one} function with multiple weights is present in this result.

\begin{lemma}
    \label{lemma-stampachhia-wait}
    Let $M:[0,b]\ra \R_{\geq 0}$ be a non-increasing function and let $N\in \N$. Suppose that there are constants $c_i>0$ $\alpha_i \geq  0$, $\beta_i>1$, $\kappa \geq 0$, and $\sigma\geq 0$ for $i\in \left\{1,\dots,N\right\}$ such that for any $s\geq 0$, $\delta>0$ satisfying $s+\delta\leq b$, we have 
    \begin{equation}
        \label{eq-stampacchia-form1}
        M(s+\delta) \leq \kappa(b-s)^\sigma + \sum_{i=1}^N \frac{c_i \cdot M(s)^{\beta_i}}{\delta^{\alpha_i}}.
    \end{equation}
    If for any $i=1,\dots, N$ we have $\sigma \geq \frac{\alpha_i}{\beta_i -1}$ (or simply $\sigma>0$ if all $\alpha_i=0$), and we also have  
    \begin{equation}
        \label{eq-stamp-condwait}        
        b^{\alpha_i} \geq N\cdot c_i \cdot 2^{\frac{\alpha_i\beta_i}{\beta_i-1}+\sigma}\cdot \left(1+2^\sigma\right)^{\beta_i} \cdot \left(\kappa b^\sigma +  M(0)\right)^{\beta_i-1}, \quad i=1,\dots, N,
    \end{equation}
    then 
    \[
         M(b) = 0.
    \]
    In the case $\kappa = 0$, the same result is true if
    \[
        b^{\alpha_i} \geq c_i \cdot 2^{\frac{\alpha_i\beta_i}{\beta_i-1}}\cdot M(0)^{\beta_i-1},\quad i=1,\dots, N.
    \]
\end{lemma}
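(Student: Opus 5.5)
We use the classical dyadic iteration of Stampacchia type, as in \cite[Lemma 3.1]{GGDP}. Define the radii
\[
s_0=0,\qquad s_{k+1}=s_k+\delta_k,\qquad \delta_k=\frac{b}{2^{k+1}},
\]
so that $s_k=b(1-2^{-k})\uparrow b$. Since $M$ is non-increasing, it suffices to show $M(s_k)\to 0$. We will prove by induction a geometric decay
\[
M(s_k)\le \Lambda\, 2^{-k\mu},\qquad \Lambda:=\kappa b^\sigma+M(0),
\]
with the exponent
\[
\mu:=\max\Big\{\sigma,\ \max_i\frac{\alpha_i}{\beta_i-1}\Big\}.
\]
The hypothesis $\sigma\ge \alpha_i/(\beta_i-1)$ gives $\mu=\sigma$. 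When $\sigma>0$ this decay forces $M(b)\le\lim_k M(s_k)=0$. The base case $k=0$ is trivial.

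For the inductive step, apply \eqref{eq-stampacchia-form1} with $s=s_k$, $\delta=\delta_k$:
\[
M(s_{k+1})\le \kappa(b-s_k)^\sigma+\sum_{i=1}^N c_i\,\delta_k^{-\alpha_i}M(s_k)^{\beta_i}.
\]
We bound the two pieces separately.
\begin{itemize}
\item Forcing term: since $b-s_k=b2^{-k}$,
\[
\kappa(b-s_k)^\sigma=\kappa b^\sigma 2^{-k\sigma}=2^\sigma\,\kappa b^\sigma\,2^{-(k+1)\sigma}.
\]
\item Nonlinear terms: using the induction hypothesis,
\[
c_i\,\delta_k^{-\alpha_i}M(s_k)^{\beta_i}\le c_i\,2^{(k+1)\alpha_i}\,b^{-\alpha_i}\,\Lambda^{\beta_i}\,2^{-k\sigma\beta_i}.
\]
\end{itemize}
To compare the nonlinear term with $\Lambda 2^{-(k+1)\sigma}/(N\cdot\text{const})$, we need
\[
2^{(k+1)\alpha_i-k\sigma\beta_i+(k+1)\sigma}\le 2^{\alpha_i+\sigma}\cdot 2^{k(\alpha_i+\sigma-\sigma\beta_i)}.
\]
The exponent $k(\alpha_i+\sigma-\sigma\beta_i)$ is non-positive precisely because $\sigma(\beta_i-1)\ge\alpha_i$. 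Hence each nonlinear term is at most
\[
c_i\,2^{\alpha_i+\sigma}\,b^{-\alpha_i}\,\Lambda^{\beta_i-1}\cdot\Lambda\,2^{-(k+1)\sigma}.
\]
The smallness condition \eqref{eq-stamp-condwait} makes $N c_i 2^{\dots}b^{-\alpha_i}\Lambda^{\beta_i-1}$ small enough that the sum of the $N$ nonlinear terms is bounded by a fraction of $\Lambda 2^{-(k+1)\sigma}$.

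The main obstacle is bookkeeping: the forcing term already carries a factor $2^\sigma\kappa b^\sigma$, which is not $\le\Lambda$. We therefore run the induction with a slightly inflated constant,
\[
M(s_k)\le (1+2^\sigma)\Lambda\,2^{-k\sigma},
\]
which is where the factor $(1+2^\sigma)^{\beta_i}$ in \eqref{eq-stamp-condwait} enters. Concretely, the forcing term contributes at most $2^\sigma\Lambda 2^{-(k+1)\sigma}$. The nonlinear sum then must contribute at most $\Lambda 2^{-(k+1)\sigma}$, and each of the $N$ terms at most $1/N$ of it. This is exactly guaranteed by the condition, with the generous power $2^{\alpha_i\beta_i/(\beta_i-1)+\sigma}$ absorbing the constants above, since $\alpha_i\beta_i/(\beta_i-1)\ge\alpha_i$.

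The case $\kappa=0$ is the classical argument, with no forcing term. Take $\mu_i=\alpha_i/(\beta_i-1)$ and $\mu:=\max_i\mu_i$, and prove by induction $M(s_k)\le M(0)2^{-k\mu}$ along the same dyadic sequence. The condition $b^{\alpha_i}\ge c_i2^{\alpha_i\beta_i/(\beta_i-1)}M(0)^{\beta_i-1}$ is precisely what closes the step for each $i$. With $N$ terms present, the factor $N$ is handled either by splitting as above or by absorbing it into the constants.
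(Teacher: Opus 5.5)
For the main statement your argument is correct and is the paper's proof. You use the same radii $s_k=b(1-2^{-k})$ with $\delta=b2^{-(k+1)}$ and the same inflated induction: the paper carries $M(s_k)\le \kappa b^\sigma 2^{-\sigma(k-1)}+2^{-\sigma k}(\kappa b^\sigma+M(0))$ and majorizes it by $(1+2^\sigma)(\kappa b^\sigma+M(0))2^{-\sigma k}$ inside the nonlinearity, which is exactly your hypothesis. The exponent check via $\sigma(\beta_i-1)\ge\alpha_i$ and the $1/N$ splitting are also the same. State only the inflated bound, since the first one you announce does not close.

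The gap is your final paragraph on the $\kappa=0$ clause. Set $\mu_i=\alpha_i/(\beta_i-1)$ and $\mu=\max_i\mu_i$, and use the hypothesis $M(s_k)\le M(0)2^{-k\mu}$ together with only $b^{\alpha_i}\ge c_i2^{\alpha_i\beta_i/(\beta_i-1)}M(0)^{\beta_i-1}$. The bound you actually get for the $i$-th term at step $k$ is $M(0)2^{-(k+1)\mu}\cdot 2^{\mu-\mu_i-k(\mu(\beta_i-1)-\alpha_i)}$. At $k=0$ this is $M(0)2^{-\mu_i}\ge M(0)2^{-\mu}$, and there are $N$ such terms, so the step does not close. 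Since the $c_i$ and the condition are fixed, there is nothing into which $N$ can be absorbed.

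This cannot be repaired, because the clause is false for $N\ge 2$. Take $\kappa=0$, $N=2$, $c_1=c_2=c$, $\alpha_1=\alpha_2=\alpha>0$, $\beta_1=\beta_2=2$, $A=4^{-\alpha}/(2c)$, $b=2^{-1/\alpha}L$ and $M(s)=A(L-s)^\alpha$ on $[0,b]$. Writing $x=L-s$ and $\delta=tx$ with $t\in(0,1)$, \eqref{eq-stampacchia-form1} reduces to $(t(1-t))^\alpha\le 4^{-\alpha}$, which holds. Moreover $c\,4^{\alpha}M(0)=L^{\alpha}/2=b^\alpha$, so the $\kappa=0$ condition holds, yet $M(b)>0$. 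If all $\alpha_i=0$, the clause fails even for $N=1$: take $M\equiv 1$, $c_1=1$, $\beta_1=2$.

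The paper's remark that the case $\kappa=0$ is ``identical'' hides the same defect. What your main argument actually proves for $\kappa=0$ is the conclusion under \eqref{eq-stamp-condwait} with $\kappa=0$ inserted. Alternatively, your induction $M(s_k)\le M(0)2^{-k\sigma}$ with $\sigma=\max_j\mu_j>0$ closes under $b^{\alpha_i}\ge N c_i 2^{\alpha_i+\sigma}M(0)^{\beta_i-1}$. Either version is all that the proof of Theorem \ref{thm-fsop} needs, since there the condition is only used up to constants depending on $(n,m,d)$.
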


\begin{proof}
    We will prove the case $\kappa>0$, following \cite[Lemma 3.1]{GGDP}. The proof of the case $\kappa=0$ is identical. Let $s_k = b-2^{-k}b$. Since $M$ is non-increasing, it suffices to show that 
    \[
        \lim_{k\ra \infty} M(s_k) = 0.
    \]
    We will prove by induction that 
    \[
        M(s_k) \leq \kappa b^\sigma2^{-\sigma(k-1)} + 2^{-\sigma k}\left(\kappa b^\sigma + M(0)\right), \quad k\in\N.
    \]
    For the base case, we have 
    \begin{align*}
        M(s_1) & \leq \kappa b^\sigma + \sum_{i=1}^N \frac{c_i \cdot M(0)^{\beta_i}}{(b/2)^{\alpha_i}} \\
        & \leq \kappa b^\sigma + \sum_{i=1}^N \frac{2^{-\sigma + \alpha_i - \frac{\alpha_i\beta_i}{\beta_i -1}} \cdot M(0)^{\beta_i}}{N\cdot (1+2^\sigma)^{\beta_i}\cdot \left(\kappa b^\sigma +  M(0)\right)^{\beta_i-1} } \\
        &\leq \kappa b^\sigma + 2^{-\sigma} \cdot \frac{M(0)}{N}\sum_{i=1}^N 2^{\frac{\alpha_i}{1-\beta_i}} \\
        &\leq  \kappa b^\sigma + 2^{-\sigma}M(0).
    \end{align*}
    Suppose now that for some $k\in \N$, we have 
    \[
        M(s_k) \leq \kappa b^\sigma2^{-\sigma(k-1)} + 2^{-\sigma k}\left(\kappa b^\sigma + M(0)\right).
    \]
    Then 
    \begin{align*}
        M(s_{k+1}) & \leq \kappa b^\sigma2^{-\sigma k} + \sum_{i=1}^N \frac{c_i \cdot M(s_k)^{\beta_i}}{(b/2^{k+1})^{\alpha_i}} \\
        & \leq \kappa b^\sigma2^{-\sigma k} + \sum_{i=1}^N \frac{2^{\alpha_i(k+1) - \frac{\alpha_i\beta_i}{\beta_i -1}-\sigma} \cdot \left(\kappa b^\sigma2^{-\sigma(k-1)} + 2^{-\sigma k}\left(\kappa b^\sigma + M(0)\right)\right)^{\beta_i}}{N\cdot \left(1+ 2^\sigma\right)^{\beta_i}\cdot \left(\kappa b^\sigma +  M(0)\right)^{\beta_i-1} } \\
        & \leq \kappa b^\sigma2^{-\sigma k} + \sum_{i=1}^N \frac{2^{\alpha_i(k+1) - \frac{\alpha_i\beta_i}{\beta_i -1} -\sigma\beta_i k-\sigma} \cdot \left(\kappa b^\sigma\left(1+ 2^{\sigma}\right) + M(0)\right)^{\beta_i}}{N\cdot \left(1+ 2^\sigma\right)^{\beta_i}\cdot \left(\kappa b^\sigma +  M(0)\right)^{\beta_i-1} } \\
        &\leq \kappa b^\sigma2^{-\sigma k} + 2^{-\sigma(k+1)} \cdot \frac{\left(\kappa b^\sigma + M(0)\right)}{N}\sum_{i=1}^N 2^{\alpha_i(k+1) - \frac{\alpha_i\beta_i}{\beta_i -1} -\sigma\beta_i k + \sigma(k+1)-\sigma}.
    \end{align*}
    The condition $\sigma \geq \frac{\alpha_i}{\beta_i-1}$ is enough to enforce that 
    \[
        \alpha_i(k+1) - \frac{\alpha_i\beta_i}{\beta_i -1} -\sigma\beta_i k + \sigma(k+1)-\sigma \leq 0
    \]
    for all $i=1,\dots,N$, whence 
    \[
      M(s_{k+1}) \leq  \kappa b^\sigma2^{-\sigma k} + 2^{-\sigma(k+1)} \left(\kappa b^\sigma + M(0)\right),
    \]
    and the proof is complete. 

\end{proof}

We are now prepared to prove Theorem \ref{thm-fsop}, Theorem \ref{thm-main-result-lower}, and Corollary \ref{col-biggest-ball}.
\begin{proof}[Proof of Theorem \eqref{thm-fsop}]
    Fix arbitrary $x_0 \in \R^d\setminus \mathrm{supp}\, u_0$ and $r\in (0,R_0)$. We must show that $T_r>0$. To that end, define 
    \[
    M:[T_{R_0},\infty)\times [0,R_0-r]\ra \R_{\geq0}, \quad (T,s)\mapsto \left( \sup_{T_{R_0}  \leq t\leq T} \, \int_{B_{R_0-s}} u(t) \right)^2.
    \]
    If we can find $T> T_{R_0}\geq 0$ such that $M(T,R_0-r) = 0$, then $0<T\leq T_r$, and the proof will be complete. 
    
    By Lemma \ref{lemma-local-L1}, we have for all $T>0$ and all $s,\delta$ such that $0\leq s <s+\delta \leq  R_0-r$:
    \[
        M(T,s+\delta) \, \lesssim_{(n,m,d)} \, \mathcal L^d( B_{R_0} )\, (T-T_{R_0})\left( \delta^{-(dm+2)} M(T,s)^{\frac{m+1}{2}} + \delta^{-(4+d(n+1))} M(T,s)^{\frac{n+2}{2}} \right).
    \]
    We have used here $B_{R_0} \cap \mathrm{supp} \ u(t) = \nothing$ for any $t<T_{R_0}$, which allows one to neglect the initial terms which appear on the right hand side of \eqref{eq-local-L1}. Hence all of the conditions of Lemma \ref{lemma-stampachhia-wait} are fulfilled with $\kappa =0$. Therefore, for any $T>T_{R_0}$ such that \emph{both of the following are fulfilled}:
    \begin{align*}
        (R_0-r)^{dm+2} \, \gtrsim_{(n,m,d)} \, \mathcal L^d( B_{R_0} )\, (T-T_{R_0}) \left( \sup_{T_{R_0}  \leq t\leq T} \, \int_{B_{R_0-s}} u(t) \right)^{m-1} \\
        (R_0-r)^{d(n+1)+4} \, \gtrsim_{(n,m,d)} \, \mathcal L^d( B_{R_0} )\, (T-T_{R_0}) \left( \sup_{T_{R_0}  \leq t\leq T} \, \int_{B_{R_0-s}} u(t) \right)^n,
    \end{align*}
    we have $M(T,R_0-r) = 0$. Using the fact that $\mathcal L^d(B_{R_0}) \, \lesssim_d  \, R_0^d $ and that 
    \[
        \sup_{T_{R_0}  \leq t\leq T} \, \int_{B_{R_0-s}} u(t) \leq \, \norm{u_0}_{L^\infty_t L^1_x},
    \]
    we find that the above conditions are fulfilled if $T$ satisfies
    \begin{equation}
        \label{eq-fsop-time1}
        T\leq T_{R_0} + c_1\min\left\{ \, (R_0-r)^{dm+2}R_0^{-d} \norm{u}_{L^\infty_tL^1_x}^{1-m} \, , \, (R_0-r)^{d(n+1)+4}R_0^{-d} \norm{u}_{L^\infty_tL^1_x}^{-n} \right\}.
    \end{equation}
    Here the constant $c_1>0$ is the implicit constant which has been carried through the computations until this point. It depends only on $(n,m,d)$. For any such $T$, we have $T_r\geq T$, which yields the bound 
    \begin{equation}
        \label{eq-fsop-time2}
        T_r \, \geq  \, T_{R_0} + c_1\min\left\{ \, (R_0-r)^{dm+2}R_0^{-d}\norm{u}_{L^\infty_tL^1_x}^{1-m} \, , \, (R_0-r)^{d(n+1)+4}R_0^{-d} \norm{u}_{L^\infty_tL^1_x}^{-n} \right\} \, > \, 0.
    \end{equation}
    
\end{proof}

\begin{remark}
    Now that finite speed of propagation is established, we know that $\norm{u}_{L^\infty_t L^1_x} = \norm{u_0}_{L^1_x}$.
\end{remark}

\begin{proof}[Proof of Theorem \ref{thm-main-result-lower}] 
For the first part, the bound \eqref{eq-fsop-time2} implies immediately that 
\[
    \lim_{r\ra 0} \, \left( T_{R_0} + c_1\min\left\{ \, (R_0-r)^{dm+2}R_0^{-d} \norm{u_0}_{L^1}^{1-m} \, , \, (R_0-r)^{d(n+1)+4}R_0^{-d} \norm{u_0}_{L^1}^{-n} \right\} \right) \, \leq \, \lim_{r\ra 0}\, T_r \, = \, T_w.
\]
Computing the limit on the left side yields precisely the desired inequality \eqref{eq-spreadingrate-lower}. For the second part of the theorem, suppose that there exists $\kappa = \kappa(x_0) \in (0,\infty) $ and $r_0 = r_0(x_0) \in (0,\infty)$ such that the flatness condition \eqref{eq-wtp-flatness-condition} holds. By Poincar\'{e}'s inequality, we have
\begin{equation}\label{eq-intial-decay}
        \sup_{r\leq r_0} \, \frac{1}{r^{d+\sigma}} \, \int_{B_{R_0+r}\setminus B_r} u_0^2 \, \,\lesssim \, \,\sup_{r\leq r_0} \, \frac{r^2}{r^{d+\sigma}} \,\int_{B_{R_0+r}\setminus B_r} \abs{\nabla u_0}^2 \,  \leq \, \kappa^2.
\end{equation}

\noindent We define the function $W:(0,\infty)\times [0,r_0] \, \ra \, \R_{\geq0}$ by 
\[
    W(T,s) \,:= \, \sup_{t\leq T} \, \int_{B_{R_0+r_0-s}} u(t)^2.
\]
If we can find $T>0$ such that $W(T,r_0) = 0$, then $0<T\leq T_{R_0}$. To this end, we also proceed with a Stampacchia argument, but using Lemma \ref{lemma-local-L2} instead. Applying the inequality \eqref{eq-local-L2} yields the following estimate for any $T>0$ and every $0\leq s<s+\delta \leq r_0$:
\begin{equation}
    \label{eq-wtp-stampacchia-form1} \begin{split}
    W(T,s+\delta) \, & \lesssim_{(n,m,d)} \, \int_{B_{R_0+r_0-s}} u_0^2 \,  + \, \delta^2 \int_{B_{R_0+r_0-s}}\abs{\nabla u_0}^2 \\ 
    & \hspace{1cm} + T\left(\delta^{-2-\frac{d(m-1)}{2}} W(T,s)^{\frac{m+1}{2}} + \,\delta^{-4-\frac{dn}{2}}W(T,s)^{\frac{n+2}{2}} \right).
    \end{split}
\end{equation}
Now, we estimate the right hand side above by applying \eqref{eq-intial-decay} and using the fact that $\delta\leq r_0-s$:
\begin{equation}
    \label{eq-wtp-stampacchia-form2}
     \begin{split}
    W(T,s+\delta) \, \lesssim_{(n,m,d)} \, \kappa^2(r_0-s)^{d+\sigma} \, + \, T\left(\delta^{-2-\frac{d(m-1)}{2}} W(T,s)^{\frac{m+1}{2}} + \,\delta^{-4-\frac{dn}{2}}W(T,s)^{\frac{n+2}{2}} \right).
    \end{split}
\end{equation}
We have once again arrived at the required condition \eqref{eq-stampacchia-form1} of Lemma \ref{lemma-stampachhia-wait} where the parameters $\alpha_i,\beta_i$ are given by 
\[
    \alpha_i = \, 4+\frac{dn}{2} \, , \,2+\frac{d(m-1)}{2}
\]
and
\[
    \beta_i = \, \frac{n+2}{2} \, , \, \frac{m+1}{2}.
\]
Recall from the statement of Theorem \ref{thm-main-result-lower} that we have defined a parameter $\sigma$ as 
\[
    \sigma = \max\left\{\frac{8}{n}, \frac{4}{m-1}\right\}
.\]
For the application of Lemma \ref{lemma-stampachhia-wait}, it is important to compute the ratios $\alpha_i/(\beta_i -1)$, as the maximum of these ratios must be less than or equal to $d+\sigma$ in order to apply Lemma \ref{lemma-stampachhia-wait}. We see that
\[
    \max_i\left\{ \frac{\alpha_i}{\beta_i-1}\right\} = d+\sigma.
\]
Thus, the conditions to apply Lemma \ref{lemma-stampachhia-wait} are met. The following is therefore true: For any $T>0$ \emph{which satisfies both of the following conditions}:
\begin{equation}\label{eq-wtp-cond1}
    r_0^{\alpha_i} \, \gtrsim_{(n,m,d)} \, T\left( \kappa^2r_0^{d+\sigma} + W(T,0)\right)^{\beta_i-1}, \qquad i=1,2,
\end{equation}
we have $W(T,r_0)=0$. In view of the estimate 
\[
     W(T,0) = \sup_{t\leq T} \, \int_{B_{R_0+r_0}} u(t)^2 \,\leq\, \norm{u}_{L^\infty_tL^2_x}^2 \, \lesssim_{(n,m,d)} \norm{\nabla u_0}_{L^2}^{\frac{2d}{d+2}}\norm{u_0}_{L^1}^{\frac{4}{d+2}}  <+\infty,
\]
the conditions on $T$ hold if 
\[
T \, \lesssim_{(n,m,d)} \, \min\left\{ r_0^{2+\frac{d(m-1)}{2}}\left( \kappa^2r_0^{d+\sigma} + \norm{u}_{L^\infty_t L^2_x}^2\right)^{\frac{1-m}{2}} \, , \, r_0^{4+\frac{dn}{2}}\left( \kappa^2r_0^{d+\sigma} + \norm{u}_{L^\infty_t L^2_x}^2\right)^{-\frac{n}{2}} \right\}.
\]
From this, the desired estimate \eqref{eq-wtp-duration} follows after defining the implicit constant to be $c_2$. 

\end{proof}

\begin{proof}[Proof of Corollary \ref{col-biggest-ball}]
      Begin by defining the set 
    \begin{equation}
        S_t := \left\{x\in \R^d\setminus \mathrm{supp} \, u_0, \quad t\geq T_w(x) \right\}.
    \end{equation}
    Since $S_t$ gives the maximum expansion of the support until time $t$, we must have 
    \[
        \mathrm{supp} \, u(t) \setminus \mathrm{supp} \, u_0\subset \, S_t. 
    \]
    Now using the lower bound \eqref{eq-spreadingrate-lower} from Theorem \ref{thm-main-result-lower}, $S_t$ satisfies: 
    \begin{equation}\label{eq-St-representation}\begin{split}
        S_t & := \left\{x\in \R^d\setminus \mathrm{supp} \, u_0, \quad t\geq T_w(x) \right\} \\
        & \subset \left\{ x\in \R^d\setminus \mathrm{supp} \, u_0, \quad t\geq\, c_1\min\left\{ \norm{u_0}_{L^1}^{1-m} R_0(x)^{d(m-1)+2}, \norm{u_0}_{L^1}^{-n} R_0(x)^{dn+4} \right\} \right\} \\
        & \subset \left\{ x\in \R^d\setminus \mathrm{supp} \, u_0, \quad R_0(x)\,\leq\, \max\left\{\left( c_1\inv \norm{u_0}_{L^1}^{m-1} t\right)^{\frac{1}{d(m-1)+2}} \, , \, \left( c_1\inv \norm{u_0}_{L^1}^{n} t\right)^{\frac{1}{dn+4}}\right\} \right\},
        \end{split}
    \end{equation}
    For the given point $x_0\in \R^d$, we may start by including the support into the following ball: 
    \[
        \mathrm{supp} \ u(t) \subset \, B\left(x_0, \sup_{y\in \mathrm{supp} \, u(t)} \, \mathrm{dist}\left(x_0,y \right) \right).
    \]
    We now look to give an explicit estimate of the radius of this ball. We first look towards $y\in \mathrm{supp}\, u_0$, for which we simply have 
    \[
        \mathrm{dist}(x_0,y) \leq R_0(x_0) + \mathrm{diam} (\mathrm{supp}\,u_0).
    \]
    If $y\in \mathrm{supp}\, u(t) \setminus \mathrm{supp}\, u_0 \subset S_t$, then there exists a point $y^*\in \partial \left(\mathrm{supp}\, u_0 \right)$ such that 
    \[
        \mathrm{dist}(y,y^*) = R_0(y).
    \]
    Thus, applying the triangle inequality and \eqref{eq-St-representation}, we obtain:
    \begin{equation} \label{eq-max-radius}
    \begin{split}
                \mathrm{dist}(x_0,y) & \leq R_0(x_0)+ \mathrm{diam} (\mathrm{supp}\,u_0) \\
                & \hspace{1cm}+ \max\left\{\left( c_1\inv \norm{u_0}_{L^1}^{m-1} t\right)^{\frac{1}{d(m-1)+2}} \, , \, \left( c_1\inv \norm{u_0}_{L^1}^{n} t\right)^{\frac{1}{dn+4}}\right\} =: R(x_0;t) .
                \end{split}
    \end{equation}
    This establishes the uniform estimate 
    \[
        \sup_{y\in \mathrm{supp} \, u(t)} \, \mathrm{dist}\left(x_0,y \right) \, \leq \, R(x_0;t).
    \]
    To obtain the expression for $R(x_0;t)$ found in \eqref{eq-biggest-ball-radius}, we begin by seeing that for all $n,m$ satisfying \eqref{ass-nm}, we have 
    \[
        \frac{1}{dn+4} <\frac{1}{d(m-1)+2} <1.
    \]
    Now looking to the maximum in \eqref{eq-max-radius}, the previous consideration shows that the maximum is obtained by the term with exponent $1/(dn+4)$ for small times. The functions in the maximum have one point of intersection: In this case it occurs at 
    \[
        t^* :=  c_1\norm{u_0}_{L^1}^{\frac{2(n+2-2m)}{d(n+1-m)+2}}.
    \]
    For all $t>t^*$, it must be the case that the maximum is attained by the term with exponent $1/(d(m-1)+2)$. This yields the form seen in \eqref{eq-biggest-ball-radius}. 
    
\end{proof}

\section{Proofs of the Main Results: The Upper Bound for $T_w(x_0)$}\label{section-upper}

In this section, we will combine all of the previous results from Theorem \ref{thm-main-result-lower}, Corollary \ref{col-biggest-ball}, Lemma \ref{lemma-entropy-lower}, and Lemma \ref{lemma-entropy-monotonicity} in order to prove Theorem \ref{thm-main-result-upper}. 

\begin{proof}[Proof of Theorem \ref{thm-main-result-upper}]
    This proof closely follows the the approach of \cite{Fischer_Asymptotic} with some additional arguments required to properly treat the individual scaling behaviors. 
    Let $x_0\in \R^d\setminus \mathrm{supp}\, u_0$ be given. We omit all dependencies on $x_0$, since no other points will be considered. For the rest of this proof, we fix
    \begin{equation}
        \label{eq-def-t0}
        t_0 = c_1 \min\left\{  c_1\norm{u_0}_{L^1}^{-n}\left(R_0+\mathrm{diam}(\mathrm{supp}\, u_0)\right)^{dn+4}\, ,\, \norm{u_0}_{L^1}^{1-m} \left(R_0+\mathrm{diam}(\mathrm{supp}\, u_0)\right)^{d(m-1)+2} \right\}.
    \end{equation} 
    The goal is to show that $T_w \, \leq \, c_3t_0$ for some constant $c_3>0$ which does not depend on $x_0$ or the mass of $u$.
    By Lemma \ref{lemma-entropy-monotonicity} we may find $\alpha \in (-1,0)$, $\gamma <\alpha d$, and $\tau_1,\tau_2 >0$ such that 
    \[
    \gamma+4\left(\frac{\alpha+1}{n}\right)+d>0,\quad \gamma+2\left(\frac{\alpha+1}{m-1}\right)+d>0,
    \]
    and the following holds for all $0\leq t_1 < t_2<T_w$:
    \begin{equation}
        \label{eq-monotonicity-app-1}
         \begin{split}
        \int_{\R^d} \abs{x-x_0}^\gamma u(t)^{\alpha+1} \, \bigg\vert_{t=t_1}^{t=t_2} \, & \geq \, \tau_1\int_{t_1}^{t_2}\left( \int_{\mathrm{supp}\, u(t)} \abs{x-x_0}^{\gamma+2\left( \frac{\alpha+1}{m-1} \right)} \right)^{-\frac{m-1}{\alpha+1}}\left( \int_{\R^d} \abs{x-x_0}^\gamma u(t)^{\alpha+1}\right)^{\frac{\alpha+m}{\alpha+1}} \, \mathrm{d}t \\
         & \hspace{0.5cm} + \tau_2\int_{t_1}^{t_2}\left( \int_{\mathrm{supp}\, u(t)} \abs{x-x_0}^{\gamma+4\left( \frac{\alpha+1}{n} \right)}  \right)^{-\frac{n}{\alpha+1}}\left(\int_{\R^d} \abs{x-x_0}^\gamma u(t)^{\alpha+1}\right)^{\frac{\alpha+n+1}{\alpha+1}} \, \mathrm{d}t.
        \end{split}
    \end{equation}
    While there is generally no closed form solution to the ODE 
    \[
    y'(t) = q_1(t)y^{p_1} + q_2(t)y^{p_2}
    \]
    for which we may use the comparison principle, we can use the comparison principle individually with each term on the right hand side of \eqref{eq-monotonicity-app-1}; i.e., we may consider 
    \[
     y'(t) \, \geq q_1(t) y(t)^{p_1} \quad \text{ and } \quad y'(t) \, \geq \, q_2(t)y(t)^{p_2}
    \]
    individually and apply comparison principles. This implies that for any $0\leq t_1<t_2<T_w$, the following holds:  
    \begin{equation}
        \label{eq-comparison-principle} \begin{split}
        & \int_{\R^d} \abs{x-x_0}^\gamma u(t_2)^{\alpha+1} \\  &\geq \,  \max\left\{ \left[ \left( \int_{\R^d} \abs{x-x_0}^\gamma u(t_1)^{\alpha+1} \right)^{-\frac{m-1}{\alpha+1}} - \tilde\tau_1 \int_{t_1}^{t_2}\left( \int_{\mathrm{supp}\, u(t)} \abs{x-x_0}^{\gamma+2\left( \frac{\alpha+1}{m-1} \right)}  \right)^{-\frac{m-1}{\alpha+1}}\, \mathrm{d}t\right]^{-\frac{\alpha+1}{m-1}} ,   \right. \\
         & \hspace{2cm}\left. \left[ \left( \int_{\R^d} \abs{x-x_0}^\gamma u(t_1)^{\alpha+1} \right)^{-\frac{n}{\alpha+1}} - \tilde\tau_2 \int_{t_1}^{t_2}\left( \int_{\mathrm{supp}\, u(t)} \abs{x-x_0}^{\gamma+4\left( \frac{\alpha+1}{n} \right)}  \right)^{-\frac{n}{\alpha+1}}\, \mathrm{d}t\right]^{-\frac{\alpha+1}{n}} \right\}.
        \end{split}
    \end{equation}
    Due to $t_2<T_w$ and Theorem \ref{thm-fsop} (which ensures that $T_w>0$), the left hand side of \eqref{eq-comparison-principle} is finite. Therefore, it must be the case that 
    \begin{equation}
        \label{eq-finiteness-necessary-cond}
        \begin{split}
            \left(\int_{\R^d} \abs{x-x_0}^\gamma u(t_1)^{\alpha+1} \right)^{-\frac{m-1}{\alpha+1}} \,\gtrsim_{(\alpha,\gamma,m,d)} \,  \int_{t_1}^{t_2}\left( \int_{\mathrm{supp}\, u(t)} \abs{x-x_0}^{\gamma+2\left( \frac{\alpha+1}{m-1} \right)}\right)^{-\frac{m-1}{\alpha+1}}\, \mathrm{d}t  \\
            \left( \int_{\R^d} \abs{x-x_0}^\gamma u(t_1)^{\alpha+1} \right)^{-\frac{n}{\alpha+1}} \, \gtrsim_{(\alpha,\gamma,n,d)}\, \int_{t_1}^{t_2}\left( \int_{\mathrm{supp}\, u(t)} \abs{x-x_0}^{\gamma+4\left( \frac{\alpha+1}{n} \right)}  \right)^{-\frac{n}{\alpha+1}}\, \mathrm{d}t.
        \end{split}
    \end{equation}
    At this point, we must distinguish two cases: either $t_0\leq t^*$ or $t_0> t^*$ (recall the Definition of $t^*$ from Corollary \ref{col-biggest-ball}). We consider first $t^*\leq t_0$. In this case, we consider the first inequality in \eqref{eq-finiteness-necessary-cond} with $t_1 = t_0$. We shall compute the integral on the right and apply Lemma \ref{lemma-entropy-lower} to the integral on the left. To begin, we note that for all $t\geq t_0>t^*$:
    \begin{equation}
        \label{eq-t0-geq-t*-R(t)}
        R(t) \, \leq \, 2\left(c_1\inv \norm{u_0}_{L^1}^{m-1}t \right)^{\frac{1}{d(m-1)+2}}.  
    \end{equation}
    Applying \eqref{eq-t0-geq-t*-R(t)} yields for each $t_2\in (t_0,T_w)$:
    \begin{equation}
        \label{eq-fc-1-rhs}\begin{split}
        &\hspace{1cm}\int_{t_0}^{t_2}\left( \int_{\mathrm{supp}\, u(t)} \abs{x-x_0}^{\gamma+2\left( \frac{\alpha+1}{m-1} \right)}\right)^{-\frac{m-1}{\alpha+1}} \, \, \mathrm{d}t \, \geq \, \int_{t_0}^{t_2}\left( \int_{B(x_0, R(t))} \abs{x-x_0}^{\gamma+2\left( \frac{\alpha+1}{m-1} \right)}\right)^{-\frac{m-1}{\alpha+1}} \, \mathrm{d}t \\
        & \hspace{6cm} = \left( \gamma+2\left( \frac{\alpha+1}{m-1} \right)+d\right)\inv  \int_{t_0}^{t_2} R(t)^{-\frac{m-1}{\alpha+1}\left(\gamma+2\left( \frac{\alpha+1}{m-1} \right)+d\right)} \, \mathrm{d}t \\
        & \hspace{6cm} = C(\alpha,\gamma,m,n,d) \, \norm{u_0}_{L^1}^{-\frac{(m-1)^2}{(d(m-1)+2)(\alpha+1)}\left(\gamma+2\left( \frac{\alpha+1}{m-1} \right)+d\right)} \\
        &\hspace{7.5cm} \times \int_{t_0}^{t_2} t^{-\frac{m-1}{(d(m-1)+2)(\alpha+1)}\left(\gamma+2\left( \frac{\alpha+1}{m-1} \right)+d\right)} \\
        &\hspace{6cm} = C(\alpha,\gamma,m,n,d) \, \norm{u_0}_{L^1}^{-\frac{(m-1)^2}{(d(m-1)+2)(\alpha+1)}\left(\gamma+2\left( \frac{\alpha+1}{m-1} \right)+d\right)} \\
        &\hspace{9.5cm} \times t^{\frac{(\alpha d-\gamma)(m-1)}{(d(m-1)+2)(\alpha+1)}} \, \bigg\vert_{t_0}^{t_2}.
        \end{split}
    \end{equation}
    Note that the constant $C(\alpha,\gamma,m,d)$ above is positive due to $\gamma + 2\left(\frac{\alpha+1}{m-1}\right)+d>0$, and the final power of $t$ in the last line is positive due to $\gamma\leq -d<\alpha d$. Now for the left hand side of the first inequality in \eqref{eq-finiteness-necessary-cond}, we apply that $\gamma<0$ along with Lemma \ref{lemma-entropy-lower} to obtain 
    \begin{equation}
        \label{eq-fc-1-lhs}\begin{split}
         &\left(\int_{\R^d} \abs{x-x_0}^\gamma u(t_0)^{\alpha+1} \right)^{-\frac{m-1}{\alpha+1}} \,  \leq \, R(t_0)^{-\frac{\gamma(m-1)}{\alpha+1}}\left(\int_{\R^d}  u(t_0)^{\alpha+1} \right)^{-\frac{m-1}{\alpha+1}} \\
         & \hspace{1.5cm}\leq C(\alpha,m,n,d) \, R(t_0)^{-\frac{\gamma(m-1)}{\alpha+1}}\left(  \norm{u_0}_{L^1}^{\alpha+m} \int_0^{t_0} \mathcal{L}^d\left( \mathrm{supp}\, u(t) \right)^{-\frac{d(\alpha+m-1)+2}{d}} \, \mathrm{d}t\right)^{-\frac{m-1}{\alpha+1}} \\
         &\hspace{1.5cm} = C(\alpha,m,n,d)  R(t_0)^{-\frac{\gamma(m-1)}{\alpha+1}}\left(  \norm{u_0}_{L^1}^{\alpha+m} \int_0^{t_0}R(t)^{-(d(\alpha+m-1)+2)} \, \mathrm{d}t\right)^{-\frac{m-1}{\alpha+1}} \\ 
         &\hspace{1.5cm} \leq  C(\alpha,m,n,d)  R(t_0)^{-\frac{\gamma(m-1)}{\alpha+1}}\left(  \norm{u_0}_{L^1}^{\alpha+m} \, t_0R(t_0)^{-(d(\alpha+m-1)+2)} \, \mathrm{d}t\right)^{-\frac{m-1}{\alpha+1}} \\
         &\hspace{1.5cm} \leq  C(\alpha,m,n,d) \norm{u_0}_{L^1}^{-\frac{(m-1)(\alpha+m)}{\alpha+1}-\frac{(\gamma+d(\alpha+m-1)+2)(m-1)^2}{(\alpha+1)(d(m-1)+2)}} \, t_0^{1-\frac{(\gamma+d(\alpha+m-1)+2)(m-1)}{(\alpha+1)(d(m-1)+2)}} \\
         &\hspace{1.5cm} =  C(\alpha,m,n,d) \norm{u_0}_{L^1}^{-\frac{(m-1)(\alpha+m)}{\alpha+1}-\frac{(\gamma+d(\alpha+m-1)+2)(m-1)^2}{(\alpha+1)(d(m-1)+2)}} \, t_0^{\frac{(\alpha d-\gamma)(m-1)}{(d(m-1)+2)(\alpha+1)}}.
        \end{split}
    \end{equation}
    Note that we have again used \eqref{eq-t0-geq-t*-R(t)} to estimate $R(t_0)$, and we used the fact that $d(\alpha+m-1)+2>0$ in the third inequality from the bottom. In order to ensure this in dimension three, we require $\alpha > 1/3 -m$. When $m>4/3$, we are permitted to use any $\alpha\in (-1,0)$ which is compatible with the other conditions. Plugging \eqref{eq-fc-1-rhs} and \eqref{eq-fc-1-lhs} into the first inequality of \eqref{eq-finiteness-necessary-cond}, there exists a constant depending only on $(\alpha,\gamma,m,n,d)$ such that for any $t_2\in (t_0,T_w)$:
    \begin{equation}
        \label{eq-case-t0-geq-t*-done}
        \begin{split}
         &\norm{u_0}_{L^1}^{-\frac{(m-1)(\alpha+m)}{\alpha+1}-\frac{(\gamma+d(\alpha+m-1)+2)(m-1)^2}{(\alpha+1)(d(m-1)+2)}} \, t_1^{\frac{(\alpha d-\gamma)(m-1)}{(d(m-1)+2)(\alpha+1)}} \\
         & \hspace{2cm}\gtrsim_{(\alpha,\gamma,m,n,d)} \,\norm{u_0}_{L^1}^{-\frac{(m-1)^2}{(d(m-1)+2)(\alpha+1)}\left(\gamma+2\left( \frac{\alpha+1}{m-1} \right)+d\right)} t^{\frac{(\alpha d-\gamma)(m-1)}{(d(m-1)+2)(\alpha+1)}} \, \bigg\vert_{t_1}^{t_2}.
         \end{split}
    \end{equation}
    We note that 
    \[
     -\frac{(m-1)(\alpha+m)}{\alpha+1}-\frac{(\gamma+d(\alpha+m-1)+2)(m-1)^2}{(\alpha+1)(d(m-1)+2)} =-\frac{(m-1)^2}{(d(m-1)+2)(\alpha+1)}\left(\gamma+2\left( \frac{\alpha+1}{m-1} \right)+d\right),
    \]
    hence the $\norm{u_0}_{L^1}$-terms in \eqref{eq-case-t0-geq-t*-done} cancel, leaving us with 
    \begin{equation} \label{eq-m-dominates}
        t_2 \, \lesssim_{(\alpha,\gamma,n,m,d)} \, t_0.
    \end{equation}
    Since this estimate is uniform for $t_2\in (t_0,T_w)$, $T_w$ satisfies it as well. 

    We now consider the case that $t_0 < t^*$. In this eventuality, there are two sub-cases: Either $T_w$ passes before the scaling behavior switches ($T_w<t^*$), or $T_w$ comes afterwards ($T_w>t^*$). The latter case is the more-complicated of the two. We shall now show, however, that each case leads to the same estimate (with different constants). First we address the case $t_0<T_w <t^*$, in which the proof is the same as in \cite{Fischer_Asymptotic}. We first have for all $t\in [t_0,T_w)$:
    \begin{equation}
         \label{eq-t0-leq-t*-R(t)}
        R(t) \, \leq \, 2\left(c_1\inv \norm{u_0}_{L^1}^{n}t \right)^{\frac{1}{dn+4}}.
    \end{equation}
    Using \eqref{eq-t0-leq-t*-R(t)}, we further estimate the \emph{second} inequality of \eqref{eq-finiteness-necessary-cond} in the same way as before (the computation is identical up to replacing $m-1$ with $n$ and $2$ with $4$ where one see it). Applying \eqref{eq-t0-leq-t*-R(t)} yields for each $t_2\in (t_0,T_w)$:
    \begin{equation}
        \label{eq-fc-2-rhs}\begin{split}
        &\int_{t_0}^{t_2}\left( \int_{\mathrm{supp}\, u(t)} \abs{x-x_0}^{\gamma+4\left( \frac{\alpha+1}{n} \right)}\right)^{-\frac{n}{\alpha+1}} \, \, \mathrm{d}t \, \geq \, \int_{t_0}^{t_2}\left( \int_{B(x_0, R(t))} \abs{x-x_0}^{\gamma+4\left( \frac{\alpha+1}{n} \right)}\right)^{-\frac{n}{\alpha+1}} \, \mathrm{d}t \\
        & \hspace{6cm} = \left( \gamma+4\left( \frac{\alpha+1}{n} \right)+d\right)\inv  \int_{t_0}^{t_2} R(t)^{-\frac{n}{\alpha+1}\left(\gamma+4\left( \frac{\alpha+1}{n} \right)+d\right)} \, \mathrm{d}t \\
        & \hspace{6cm} = C(\alpha,\gamma,n,m,d) \, \norm{u_0}_{L^1}^{-\frac{n^2}{(dn+4)(\alpha+1)}\left(\gamma+4\left( \frac{\alpha+1}{n} \right)+d\right)} \\
        &\hspace{7.5cm} \times \int_{t_0}^{t_2} t^{-\frac{n}{(dn+4)(\alpha+1)}\left(\gamma+4\left( \frac{\alpha+1}{n} \right)+d\right)} \\
        &\hspace{6cm} = C(\alpha,\gamma,n,m,d) \, \norm{u_0}_{L^1}^{-\frac{n^2}{(dn+4)(\alpha+1)}\left(\gamma+4\left( \frac{\alpha+1}{n} \right)+d\right)} \\
        &\hspace{9.5cm} \times t^{\frac{(\alpha d-\gamma)n}{(dn+4)(\alpha+1)}} \, \bigg\vert_{t_0}^{t_2}.
        \end{split}
    \end{equation}
    Note that in this case, the constant $C(\alpha,\gamma,n,m,d)$ above is positive due to $\gamma + 4\left(\frac{\alpha+1}{n}\right)+d>0$, and the final power of $t$ in the last line is positive due to $\gamma\leq -d<\alpha d$. Turning to the left hand side of the second inequality in \eqref{eq-finiteness-necessary-cond}, we use the fact that $\gamma<0$ along with Lemma \ref{lemma-entropy-lower} to obtain:
    \begin{equation}
        \label{eq-fc-2-lhs}\begin{split}
         &\left(\int_{\R^d} \abs{x-x_0}^\gamma u(t_0)^{\alpha+1} \right)^{-\frac{n}{\alpha+1}} \,  \leq \, R(t_0)^{-\frac{\gamma n}{\alpha+1}}\left(\int_{\R^d}  u(t_0)^{\alpha+1} \right)^{-\frac{n}{\alpha+1}} \\
         & \hspace{1.5cm}  \leq C(\alpha,n,m,d) R(t_0)^{-\frac{\gamma n}{\alpha+1}}\left(  \norm{u_0}_{L^1}^{\alpha+n+1} \int_0^{t_0} \mathcal{L}^d\left( \mathrm{supp}\, u(t) \right)^{-\frac{d(\alpha+n)+4}{d}} \, \mathrm{d}t\right)^{-\frac{n}{\alpha+1}} \\
         &\hspace{1.5cm} = C(\alpha,n,m,d)  R(t_0)^{-\frac{\gamma n}{\alpha+1}}\left(  \norm{u_0}_{L^1}^{\alpha+n+1} \int_0^{t_0}R(t)^{-(d(\alpha+n)+4)} \, \mathrm{d}t\right)^{-\frac{n}{\alpha+1}} \\ 
         & \hspace{1.5cm}\leq C(\alpha,n,m,d)  R(t_0)^{-\frac{\gamma n}{\alpha+1}}\left(  \norm{u_0}_{L^1}^{\alpha+n+1} \, t_0R(t_0)^{-(d(\alpha+n)+4)} \, \mathrm{d}t\right)^{-\frac{n}{\alpha+1}} \\
         &\hspace{1.5cm} \leq C(\alpha,n,m,d) \norm{u_0}_{L^1}^{-\frac{n(\alpha+n+1)}{\alpha+1}-\frac{(\gamma+d(\alpha+n)+4)n^2}{(\alpha+1)(dn+4)}} \, t_0^{1-\frac{(\gamma+d(\alpha+n)+4)(m-1)}{(\alpha+1)(dn+4)}} \\
         & \hspace{1.5cm}= C(\alpha,n,m,d) \norm{u_0}_{L^1}^{-\frac{n(\alpha+n+1)}{\alpha+1}-\frac{(\gamma+d(\alpha+n)+4)n^2}{(\alpha+1)(dn+4)}} \, t_0^{\frac{(\alpha d-\gamma)n}{(dn+4)(\alpha+1)}}.
        \end{split}
    \end{equation}
    Note that we have once again used \eqref{eq-t0-leq-t*-R(t)} to estimate $R(t_1)$. Plugging \eqref{eq-fc-2-rhs} and \eqref{eq-fc-2-lhs} into the second inequality of \eqref{eq-finiteness-necessary-cond}, there exists a constant depending only on $(\alpha,n,m,d)$ such that for any $t_2\in (t_0,T_w)$:
    \begin{equation}
        \label{eq-case-t0-leq-t*-done}
         \norm{u_0}_{L^1}^{-\frac{n(\alpha+n+1)}{\alpha+1}-\frac{(\gamma+d(\alpha+n)+4)n^2}{(\alpha+1)(dn+4)}} \, t_0^{\frac{(\alpha d-\gamma)n}{(dn+4)(\alpha+1)}} \,  \gtrsim_{(\alpha,\gamma, n,m,d)} \,\norm{u_0}_{L^1}^{-\frac{n^2}{(dn+4)(\alpha+1)}\left(\gamma+4\left( \frac{\alpha+1}{n} \right)+d\right)} t^{\frac{(\alpha d-\gamma)n}{(dn+4)(\alpha+1)}} \, \bigg\vert_{t_0}^{t_2}.
    \end{equation}
    We note that 
    \[
     -\frac{n(\alpha+n+1)}{\alpha+1}-\frac{(\gamma+d(\alpha+n)+4)n^2}{(\alpha+1)(dn+4)} =-\frac{n^2}{(dn+4)(\alpha+1)}\left(\gamma+4\left( \frac{\alpha+1}{n} \right)+d\right).
    \]
    Hence, just as before, the $\norm{u_0}_{L^1}$-terms in \eqref{eq-case-t0-leq-t*-done} cancel, leaving us with 
    \begin{equation} \label{eq-n-dominates}
        t_2 \, \lesssim_{(\alpha,\gamma, n,m,d)} \, t_0.
    \end{equation}
    Since this holds for all $t_2\in (t_0,T_w)$, it also holds for $T_w$.

    We arrive now at the case $t^*\in (t_0,T_w)$. The ambiguity in this case arises from the fact that the scaling of $R(t)$ changes between $t_0$ and $T_w$, which means that that neither \eqref{eq-t0-geq-t*-R(t)} nor \eqref{eq-t0-leq-t*-R(t)} hold on the entire interval $[t_0,T_w)$. Nonetheless, \eqref{eq-t0-leq-t*-R(t)} holds for all $t\in [t_0,t^*]$. On this interval, then, the previous estimates may be applied (with $t_1 = t_0$ and $t_2=t^*$) to obtain 
    \[
        t^* \, \lesssim_{(\alpha,\gamma,n,m,d)} \, t_0.
    \]
    Furthermore, we see that for $t\in [t^*,T_w)$, the estimate \eqref{eq-t0-geq-t*-R(t)} holds (using $t^*>t_0$). Thus, the previous estimates can also be used to derive \eqref{eq-m-dominates}, but with $t_1=t^*$, i.e.,
    \[
        T_w\, \lesssim_{(\alpha,\gamma,n,m,d)} \, t^*.
    \]
    Chaining the last two inequalities together gives the desired estimate. To complete the proof, we need simply take $c_4>0$ to be the largest of the three implicit constants found in the different cases we have considered. 

\end{proof}

\bigskip
\noindent
\textbf{Acknowledgements.}
J.U.\ has been supported by the Graduiertenkolleg 2339 IntComSin
”Interfaces, Complex Structures, and Singular Limits” of the Deutsche Forschungsgemeinschaft (DFG, German Research Foundation) with Project-ID 321821685. The support
is gratefully acknowledged. J.U. also thanks G. Gr\"un for many useful discussions about the topic. 

\vspace{0.1cm}
\noindent
\textbf{AI Usage Declaration.}
At no point in the creation of this manuscript, or in the research process for the results contained herein, has generative AI of any sort been used.

\bibliographystyle{amsplain}
\nocite{*}
\bibliography{bibliography}
%\addcontentsline{toc}{section}{bibliography}

\end{document}